\documentclass[a4paper,11pt,DIV=11,%
abstract=on%
]{scrartcl}                  %
\usepackage{graphicx}
\usepackage{tabularx}
\usepackage{amsmath, amsthm, amssymb}
\usepackage{mathtools}
\usepackage[utf8]{inputenc}
\usepackage{todonotes}
\usepackage{subcaption}
\usepackage[ruled,vlined]{algorithm2e}
\captionsetup{compatibility=false}
\usepackage{array,multirow,multicol}
\usepackage{hyperref}
\usepackage[
  backend=biber,
  style=alphabetic,
]{biblatex}
\bibliography{biblio}

\newtheorem{alg}{Algorithm}
\newtheorem{rmk}[alg]{Remark}
\newtheorem{prop}[alg]{Proposition}
\newtheorem{defi}[alg]{Definition}
\newtheorem{lem}[alg]{Lemma}

\newcommand{\argmin}{\operatornamewithlimits{argmin}}

\newcommand{\proj}{\operatorname{proj}}
\newcommand{\prox}{\operatorname{prox}}
\newcommand{\1}{\vec{1}}
\newcommand{\sign}{\operatorname{sgn}}

\newcommand{\TV}{\text{TV}}
\newcommand{\Hil}{\mathcal{H}}

\usepackage{xcolor}
\newcommand\fg[1]{{\color{black}#1}}
\newcommand\dt[1]{{\color{black}#1}}

\newcolumntype{C}[1]{>{\centering\arraybackslash}m{#1}}
\newcolumntype{L}[1]{>{\arraybackslash}m{#1}}
\newcolumntype{R}[1]{>{\raggedleft\arraybackslash}m{#1}}

\begin{document}

\title{Variational Graph Methods for\\ Efficient Point Cloud Sparsification%
}

\author{
	Daniel Tenbrinck%
	\thanks{Department Mathematik,
	Friedrich-Alexander-Universität Erlangen-Nürnberg\newline
	\texttt{daniel.tenbrinck@fau.de, martin.burger@fau.de}}%
	\and
	Fjedor Gaede%
	\thanks{Institut für Analysis und Numerik,
	Westfälische Wilhelms-Universität Münster\newline
	\texttt{fjedor.gaede@uni-muenster.de}}%
	\and
	Martin Burger%
	\footnotemark[1]%
}
\date{March 7th, 2019}

\maketitle

\begin{abstract}
In recent years new application areas have emerged in which one aims to capture the geometry of objects by means of three-dimensional point clouds. Often the obtained data consist of a dense sampling of the object's surface, containing many redundant 3D points. These unnecessary data samples lead to high computational effort in subsequent processing steps. Thus, point cloud sparsification or compression is often applied as a preprocessing step. The two standard methods to compress dense 3D point clouds are random subsampling and approximation schemes based on hierarchical tree structures, e.g., octree representations. However, both approaches give little flexibility for adjusting point cloud compression based on a-priori knowledge on the geometry of the scanned object. Furthermore, these methods lead to suboptimal approximations if the 3D point cloud data is prone to noise.
In this paper we propose a variational method defined on finite weighted graphs, which allows to sparsify a given 3D point cloud while giving the flexibility to control the appearance of the resulting approximation based on the chosen regularization functional. The main contribution in this paper is a novel coarse-to-fine optimization scheme for point cloud sparsification, inspired by the efficiency of the recently proposed Cut Pursuit algorithm for total variation denoising. This strategy gives a substantial speed up in computing sparse point clouds compared to a direct application on all points as done in previous works and renders variational methods now applicable for this task. We compare different settings for our point cloud sparsification method both on unperturbed as well as noisy 3D point cloud data.
\end{abstract}

\section{Introduction}
\label{intro}
Due to recent technological advances 3D depth sensors have become affordable for the broad public in the last years. Nowadays we are able to scan 3D objects by relatively cheap data acquisition devices, such as the Microsoft Kinect, or simply by using the cameras of our cell phones together with an elaborated reconstruction software \cite{Kolev2014}. Additionally, we benefit from the ever increasing computational power of general purpose computing hardware on smaller scales leading to a higher mobility of computing devices. This technological trend led to the rise of new application areas in which one aims to capture the geometry of scanned objects as 3D point clouds. Processing of raw point clouds is rather challenging as the points are unorganized and one has no clue on the underlying data topology a-priori. On the other hand, using a meshing algorithm as a preprocessing step on the point cloud often leads to artifacts and holes for non-uniformly distributed points, and thus should be avoided in these cases.

Based on the application one has to discriminate between two different types of 3D point clouds. First, there exist point cloud data of time-varying objects, i.e., the object to be captured is dynamic. This situation typically appears in the augmented reality entertainment environment, e.g., in 3D tele-immersive video \cite{Mekuria2017} or motion-controlled computer gaming as the Microsoft Kinect system. On the other hand, in science related areas one has to deal with static point clouds of single  objects or even whole landscapes. Especially the use of small aircrafts and drones together with 3D sensor technology, such as LiDAR, makes it possible to capture vast regions as point cloud data for geographic information systems. One well-known project that openly publishes the acquired point cloud data is OpenTopography \cite{opentopography}. It hosts datasets with currently approximately up to $284$ billion total LiDAR returns covering an area of roughly $26,000$ km$^2$. Processing and analysis of such massive point clouds is a major challenge due to the high computational costs. In this paper we will concentrate on the latter type of point cloud, i.e., static unorganized 3D point clouds.

As becomes apparent processing of massive 3D point clouds is very time consuming and hence there is a strong need for point cloud sparsification or compression. One possible strategy is to exploit redundancies within the sampling and reducing unnecessary 3D points only to the required level of detail. Ideally, one wants to find an approximation of a given point cloud, such that flat regions are described only by very few points, while feature-rich surface regions contain a higher density of 3D points and hence a better resolution of small details. It is feasible to first approximate the dense point cloud by polygonal meshes and subsequently apply mesh coarsening strategies, e.g., cf. \cite{Gooch2003}. However, triangulation is in general too computationally expensive to be used for massive 3D point cloud sparsification. Hence, other methods for compression directly work on the raw data of unorganized 3D point clouds. Typically, there are two standard methods, which both can be found, e.g., in the open source Point Cloud Library (PCL) \cite{PCL}. 
The first approach performs a random subsampling of a given point cloud based on a user-controlled fraction parameter assuming a uniform point distribution. It gets clear that one has little control and flexibility for point cloud sparsification in this simple method. Additionally, results are in general not reproducible as they are based on the actual seeding of the applied pseudo-random generators. The second standard strategy is based on the idea of partitioning the data into 3D cells of a fixed size, which can be controlled by the user. Methods such as an octree \cite{octree1980} data representation start by finding a 3D bounding box of the scanned object that contains all acquired 3D points (after an optional outlier removal). Then the bounding box is successively divided into equally-sized cells up to a level in which a subpartition becomes empty. A sparse version of the original dense point cloud can be obtained by choosing one level of the octree data representation. The disadvantage of these methods is that the orientation of the coordinate system containing the 3D point cloud has impact on the octree approximation results. Furthermore, one has no immediate influence on the distribution of the resulting point cloud sparsification and thus cannot control the density of 3D points in feature-rich surface regions.
 
The two standard methods for point cloud sparsification described above, i.e., random subsampling and octree data representation, are on the one hand able to provide compressed 3D point clouds relatively fast without the need to reconstruct the scanned object's surface by a polygon mesh or levelset function. On the other hand, they give the user little control about the level-of-detail of the resulting approximation. Furthermore, these methods are not suitable for point cloud sparsification of fine features in the presence of geometric noise perturbations as we will show in Section \ref{s:numerics}.

Since many applied problems can be cast into a variational model they play a key role in data sciences nowadays, e.g., in image processing or machine learning. Calculus of variations has a long history within the field of mathematical analysis and evolved an elaborated theory with many useful tools. In this setting one formulates a task as an optimization problem of functionals and then exploits the solid theory of variational methods to investigate the existence and uniqueness of optimal solutions, as well as to deduce algorithms to numerically compute the latter. Additionally, they provide more flexibility in controlling the appearance of solutions, e.g., by modeling a-priori knowledge with the help of properly chosen regularization functionals. For this reason the application of variational methods would be beneficial for point cloud compression. However, since 3D point clouds are unorganized and have very little structure in general a translation of traditional variational methods is not directly possible as they are formulated for data with a structured topology, e.g., images or voxel grids.

One way to tackle this problem is to model the data by a finite weighted graph and then translate variational methods and partial differential equations to the abstract structure of the graph. This has been initially proposed and investigated in the seminal works in \cite{elmoataz2008,gilboa2008}. Yet, variational graph methods are computationally infeasible for 3D point cloud data. Applying a variational denoising model on a dense point cloud using convex, non-smooth regularization functionals will lead to a sparse approximation as reported in previous works discussed below. However, the process of numerically solving the involved equations is computationally very intense as we show in this work. Depending on the number of samples in the original point cloud users may have to wait for hours in order to get a sparse approximation using variational methods for this task. This is our motivation for proposing a more efficient strategy to solve variational graph problems on large multi-dimensional data sets. 

\subsection{Related work}
In order to tackle variational problems on finite weighted graphs the basic graph operators were introduced independently by Elmoataz, Lezoray and Bougleux in \cite{elmoataz2008} and by Gilboa and Osher in \cite{gilboa2008}. These definitions were used to introduce the notion of a graph $p$-Laplacian as a one-dimensional vertex function, which has been applied for solving imaging problems on graphs, such as denoising, segmentation, and simplification (cf. \cite{elmoataz2015} and reference therein). Subsequently, the anisotropic graph $p$-Laplacian, i.e., each coordinate is treated independently, has been translated by Lozes et al. to three dimensional meshes, polygonal curves and 3D point clouds represented by graphs \cite{lozes_2014_imagestuff,lozesdiss}. Using this approach the authors were able to tackle imaging problems such as morphological inpainting, restoration, and denoising for surfaces and point clouds. 
Particularly, they showed preliminary results of using a non-convex variational model for 3D point cloud sparsification, i.e., the graph $p$-Laplacian for $p<1$.
In \cite{danielundronnie2017, danielundronnie2018} Bergmann and Tenbrinck extended the graph framework to manifold-valued data and showed results for denoising and inpainting of semi implicitly given surfaces, surface normals and phase-valued data. 
Since the method proposed in this paper contains a denoising step we mention in the following related work on point cloud and mesh denoising. From a large amount of proposed denoising methods we will list only a few important representatives. In \cite{bilateralfiltering} Fleishman et al. introduced a bilateral filtering method which filters vertices in the normal direction by using the respective local neighborhoods. Due to its simplicity, efficiency, and a good feature preservation it was basis for many later works. Mattei et al. introduced in \cite{denoisingrpca} a point cloud denoising method with a moving robust principal component analysis, which does not require oriented normals and minds local and nonlocal features. Sharp edges are preserved by minimizing a weighted \dt{total variation} regularization. Recently, Yadav et al. proposed a normal voting tensor and binary optimization in \cite{meshdenoisingnormalvotingtensor}. They also provide a rich quantitative comparison with other denoising methods.
In \cite{l0denoising2015} Sun et al. present a denoising method based on $\ell_0$ regularization. This is done by computing the normals of the surface and then denoising the point cloud by allowing movement only in the normal direction. Both steps are done with a $\ell_0$ regularization.
 Zhong et al. \cite{denoisingl1} provide an algorithm that decouples noise and features from the data. For this sake they use a discrete Laplace regularization to get the underlying smooth surface and then recover the sharp features by a compressed sensing approach. 
 
A research field known as `stippling` is closely related to the task of point cloud sparsification in which one aims to approximate arbitrary density functions by point distributions. There exists a heuristic method known as Lloyd's algorithm \cite{Lloyd} that aims to find barycenters of partitions based on $k$-means clustering and the related Voronoi cells. More sophisticated methods extend this approach via a variational formulation based on optimal transport and Laguerre cells \cite{blueOptimalTransport, OptimalTransportPointCloud}.

In this paper we are inspired by the general framework of the Cut Pursuit algorithm first proposed in \cite{cutloic}. Landrieu and Obozinski introduced two algorithms with Cut Pursuit methods to solve minimization problems regularized with total variation and $\ell_0$ regularization for the Mumford-Shah penalization of the boundary length. Additionally, Raguet and Landrieu present in \cite{raguetcut} an extension of the Cut Pursuit method for an additional non-differentiable term. This term is given by a vertex function which is said to be non-differentiable, but for which every directional derivative exists. To solve the resulting model, they introduced a ternary cut and proved convergence of this algorithm. Tests on brain source identification in electroencephalography and 3D point cloud labeling demonstrate an enormous speed up compared to the well-known preconditioned primal-dual algorithm \cite{primaldual,diagonalPPD} and the preconditioned forward Douglas-Rachford splitting \cite{raguet2013generalized,raguet2017note} on graphs. This speed up motivates our work on efficient methods for 3D point cloud sparsification.

\subsection{Own contributions}
In this paper we overcome the problems discussed above by proposing an optimization technique that follows a coarse-to-fine strategy as sometimes used in other imaging tasks, e.g., multiscale methods for optical flow computation \cite{opticalflow_1,opticalflow_2}. Our method is based on an alternating iterative scheme that is inspired by the recently proposed Cut Pursuit algorithm discussed above. In contrast to the seminal work by Landrieu et al. in \cite{cutloic} we decouple the graph cut partitioning step and the denoising step of Cut Pursuit even further by introducing two different regularization parameters. This allows for additional flexibility in the control of the appearance of the sparse 3D point cloud \dt{, i.e., we are able to steer both the compression rate as well as the smoothness properties of the point cloud independently. 

Additionally, we introduce a new regularization term for Cut Pursuit that can be interpreted as weighted $\ell_0$ regularization. We investigate the properties of this regularization term and derive an algorithm for point cloud sparsification. The $\ell_0$ regularization has the advantage that is yields very good results for point cloud sparsification, while being efficiently to compute. Indeed, this proposed method leads to a speed up of two orders of magnitude and thus is valuable for applications in which processing and analysis of point clouds in near-realtime is mandatory. We compare this novel regularization technique to traditional ones, e.g., isotropic $\ell_2$ (Tikhonov) or anisotropic/isotropic $\ell_1$ (total variation) regularization.

Another contribution is a new heuristic method to perform graph cuts} in the case of isotropic regularization functionals, which induce a challenging coupling of the data coordinates.

Using the proposed method we are able to compress big point cloud data with an enormous speed up compared to applying the same variational denoising method directly on the full point cloud as performed, e.g., in \cite{elmoataz2008}. We also introduce a preconditioning scheme for the arising optimization problems, which additionally increases the numerical efficiency. This overall efficiency boost renders our method a strong alternative to the current standard methods for point cloud sparsification. In particular we show that in one special case our method performs the octree sparsification strategy, and hence can be seen as generalization of well-known standard methods. 

Finally, we propose a debiasing step for the reconstruction of very noisy point cloud data that allows to correct from typical bias effects of non-smooth regularization functionals such as total variation regularization.

Note that by using finite weighted graphs for modeling the point cloud data the proposed optimization scheme is not restricted to unorganized 3D point clouds. First, if a 3D surface is given as a triangulated mesh then one can directly use the edges and vertices of this polygon mesh as a graph and perform the same steps as described in this paper. Second, as our method is not bounded to three-dimensional data one could use the same method for sparsification of high-dimensional point cloud data, e.g., feature points in machine learning applications. %

\subsection{Outline}
The outline of this paper is as follows. In Section \ref{s:graphs} we discuss how variational models and partial differential equations can be translated to finite weighted graphs. We also introduce an anisotropic and isotropic $p$-Laplace operator for a multidimensional vertex function $f$. Subsequently, we define in Section \ref{s:cut_pursuit} the variational model we apply for point cloud sparsification as well as the basic idea of the Cut Pursuit algorithm. For the denoising step of this method we deduce the needed updates for a primal-dual optimization strategy on graphs and describe a preconditioning scheme for the optimization problem. In Section \ref{s:numerics} we perform various numerical experiments to demonstrate the efficiency of the proposed optimization strategy on dense 3D point clouds. We compare different compression methods and regularization functionals on both unperturbed as well as noisy point cloud data. We conclude this paper by a short discussion of possible extensions to our method in Section \ref{s:discussion}.

\section{Finite weighted graphs}
\label{s:graphs}
Finite weighted graphs play an important role in many different fields of research today, e.g., image processing~\cite{elmoataz2008,gilboa2008}, machine learning~\cite{ZB11,BM16,GSBLB16,BH09}, or network analysis~\cite{LC12,M14,SNFOV13}. 
Their key advantage is that they allow to model and process discrete data of arbitrary topology. 
Recently, there has been a strong effort to translate well-studied tools from applied mathematics to finite weighted graphs, e.g., variational methods and partial differential equations. 
This enables one to apply these tools to many new application areas that cannot be tackled directly by traditional data modeling techniques, i.e., grids and finite elements. 
Furthermore, graphs allow to exploit repetitive patterns or self-similarity in the data by building edges between related data points. 
Hence, they can be used to process both local as well as nonlocal problems in the same unified framework.
Due to the abstract nature of the graph structure one may build hierarchical graphs to represent whole sets of entities by a single vertex, e.g., image regions consisting of neighboring pixels \cite{elmo_hierachical}. These coarse data representations lead to very efficient optimization techniques as we will discuss in Section~\ref{s:cut_pursuit} below.

Although the exact description of finite weighted graphs is dependent on the application, there exists a common consent of basic concepts and definitions in the literature~\cite{elmoataz2008,GGOB14,gilboa2008}.
In the following we recall these basic concepts and the respective mathematical notation, which we will need to introduce the proposed graph methods for point cloud sparsification below.

\subsection{Basic graph terminology}
\label{ss:basic_notation}
A \emph{finite weighted graph}~$G$ is defined as a triple~$G = (V, E, w)$ for which
\begin{itemize}
  \item[$\bullet$] $V = \{1, \dots, n\}, n\in\mathbb{N}$, is a finite set of indices denoting the \emph{vertices},
  \item[$\bullet$] $E \subset V \times V$ is a finite set of (directed) \emph{edges} connecting a subset of vertices,
  \item[$\bullet$] $w \colon E \rightarrow \mathbb{R}^+$ is a nonnegative \emph{weight function} defined on the edges of the graph.
\end{itemize}
For given application data each \emph{graph vertex}~$u \in V$ typically models an entity in the data structure, e.g., elements of a finite set, pixels in an image, or nodes in a network. 
It is important to distinguish between abstract data entities modeled by graph vertices and attributes associated with them. 
The latter can be modeled by introducing vertex functions as defined below.
A \emph{graph edge} \((u,v)\in E\) between a start node \(u\in V\) and an end node \(v\in V\) models a relationship between two entities, e.g., geometric adjacency, entity interactions, or similarity depending on the associated attributes. 
In our case, we consider graphs with \emph{undirected} edges, i.e.,~\( (u,v)\in E \Rightarrow (v,u)\in E\) in general.

A node $v \in V$ is called a \emph{neighbor} of the node~$u \in V$ if there exists an edge $(u,v) \in E$. 
For this relationship we use the abbreviation $v \sim u$, which reads as “\(v\) is a neighbor of \(u\)”.
If on the other hand \(v\) is not a neighbor of \(u\), we use~\(v\not\sim u\).
We define the \emph{neighborhood}~\(\mathcal N(u)\) of a vertex~$u \in V$ as $\mathcal{N}(u) \coloneqq \{ v\in V \colon v \sim u \}$.
The \emph{degree} of a vertex $u \in V$ is defined as the amount of its neighbors $\operatorname{deg}(u) = \lvert \mathcal{N}(u)\rvert$.

\subsection{Vertex and edge functions}
\label{ss:graph_functions}
To relate the abstract structure of a finite graph to some given data, one can introduce vertex and edge functions. Let $\mathcal{H}(V;\mathbb{R}^d)$  be the Hilbert space of vector-valued functions on the vertices of the graph, i.e., each function $f: V \to \mathbb{R}^d$ in $\mathcal{H}(V;\mathbb{R}^d)$ assigns a real vector $f(u)$ to each vertex $u \in V$. In the following will denote $\mathcal{H}(V;\mathbb{R}^d)$ with  $\mathcal{H}(V)$ for the sake of simplicity. For a function $f \in \mathcal{H}(V)$ the $\ell_p$- and $\ell_\infty$-norm of $f$ are given by:
\begin{align}
\label{eq:p_norm}
\begin{split}
  \|f\|_p \ &= \ \Bigl( \sum\limits_{u\in V} \|f(u)\|^p \Bigr)^{1/p} \ , \quad \text{ for } 1 \leqslant p < \infty \ ,\\
  \|f\|_\infty &= \  \max\limits_{u\in V}\bigl(\|f(u)\|\bigr)\ , \hspace{1.04cm} \text{ for }  p = \infty \ .
\end{split}
\end{align}
The Hilbert space $\mathcal{H}(V)$ is endowed with the following inner product
\begin{equation*}
\langle f,g \rangle_{\mathcal{H}(V)} = \sum_{u \in V}\langle f(u),g(u)\rangle_{\mathbb{R}^d},
\end{equation*}
with $f,g \in \mathcal{H}(V)$.

Similarly, let $\mathcal{H}(E;\mathbb{R}^m)$ be the Hilbert space of vector-valued functions defined on the edges of the graph, i.e., each function $F: E \to \mathbb{R}^m$ in $\mathcal{H}(E;\mathbb{R}^m)$ assigns a real vector $F(u,v)$ to each edge $(u,v) \in E$. As before we will abbreviate $\mathcal{H}(E;\mathbb{R}^m)$ by $\mathcal{H}(E)$. The Hilbert space $\mathcal{H}(E)$ is then endowed with the following inner product: 
\begin{equation*}
\langle F, G \rangle_{\mathcal{H}(E)} = \sum_{(u,v) \in E} \langle(F(u,v),G(u,v)\rangle,
\end{equation*}
for $F,G\in \mathcal{H}(E)$. It is easy to show that the dual space of $\mathcal{H}(V)$ is $\mathcal{H}(E)$.

To model the significance of a relationship between two connected vertices with respect to an application dependent criterion one introduces a \emph{weight function} $w \in \mathcal{H}(E;\mathbb{R})$.
Often, the weight function is chosen as a similarity function based on the attributes of the modeled entities, i.e., by the evaluation of associated vertex functions. 
For these cases the weight function $w$ is chosen such that it takes high values for important edges, i.e., high similarity of the involved vertices, and low values for less important ones. 
In many applications one normalizes the values of the weight function by $w \colon E \rightarrow [0,1]$.
Note that a natural extension of the weight function to the full set~$V \times V$ is given by defining~$w(u,v) = 0$, if~$v \not\sim u$ or~$u = v$ for any~$u,v \in V$.
Then the edge set of the graph can simply be characterized as~$E = \{ (u,v) \in V \times V \colon w(u,v) > 0 \}$. 
Often it is preferable to use \emph{symmetric} weight functions, i.e., \(w(u,v)=w(v,u)\). 
This also implicates that~$v \sim u \Rightarrow u \sim v$ holds for all $u,v \in V$ and thus all directed graphs with symmetric weight function can be interpreted as~\emph{undirected} graphs

\subsection{First-order partial difference operators on graphs}
\label{ss:first_order_differential}
Using the basic concepts from the previous sections we are able to introduce the needed mathematical tools to translate standard differential operators from the continuous setting to finite weighted graphs. The fundamental elements for this translation are first-order partial difference operators on graphs, which have been initially proposed in \cite{elmoataz2008,gilboa2008}. In the following we assume that the considered graphs are connected, undirected, with neither self-loops nor multiple edges between vertices.

Let $G = (V,E,w)$ be a finite weighted graph and let $f\in \mathcal{H}(V)$ be a function on the set of vertices $V$ of $G$. Then one can define the \textit{weighted partial difference} of $f$ at a vertex $u \in V$ in direction of a vertex $v \in V$ as:
\begin{equation}
\label{eq:finite_difference}
  \partial_vf(u) \ = \ \sqrt{w(u,v)}\left(f(v)-f(u)\right) \ .
\end{equation}
As for the continuous definition of directional derivatives, one has the following properties $\partial_vf(u) = -\partial_uf(v)$, $\partial_uf(u) = \vec{0}$, and if $f(u) = f(v)$ then $\partial_vf(u) = \vec{0}$.

Based on the definition of weighted partial differences in \eqref{eq:finite_difference} one can straightforwardly introduce the \textit{weighted gradient operator} on graphs $\nabla_w: \mathcal{H}(V) \rightarrow \mathcal{H}(E)$, which is simply defined as the weighted finite difference on the edge $(u,v) \in E$, i.e.,%
\begin{equation}
\label{eq:gradient}
  (\nabla_wf)(u,v) \ = \ \partial_vf(u)
\end{equation}
It gets clear that this operator is linear.
The {\emph adjoint operator} $\nabla_w^*\colon\mathcal{H}(E)\rightarrow \mathcal{H}(V)$ of the weighted gradient operator is a linear operator defined by 
\begin{equation*}
\langle \nabla_wf,G\rangle_{\mathcal{H}(E)}=\langle f,\nabla_w^*G\rangle_{\mathcal{H}(V)} \quad \text{ for all } f\in \mathcal{H}(V), G\in \mathcal{H}(E).
\end{equation*}
Note that for undirected graphs with a symmetric weighting function $w \in \mathcal{H}(E,\mathbb{R})$ the adjoint operator $\nabla_w^*$, of a function~$G\in \mathcal{H}(E)$ at a vertex~$u\in V$ has the following form:
\begin{equation}
\label{eq:adjoint}
  (\nabla_w^*G)(u) \ = \ \sum_{v\sim u}{\sqrt{w(u,v)}(G(v,u)-G(u,v))}.
\end{equation} 
One can then define the weighted divergence operator on graphs via the adjoint operator as $\operatorname{div}_w \coloneqq -\nabla_w^*$. The divergence on a graph measures the net outflow of an edge function in each vertex of the graph.

To measure the variation of a vertex function $f \in \mathcal{H}(V)$ with values in $\mathbb{R}^d$ we introduce a family of $p$-$q$-norms based on the weighted gradient operator for $p,q \geq 1$ as follows:
\begin{align}
\label{eq:p-q-norm}
\begin{split}
\Vert \nabla_w f \Vert_{p;q} &= \Bigl(\sum_{(u,v)\in E}  \Vert \nabla_wf(u,v)) \Vert^p_q \Bigr)^\frac{1}{p} \\
&= \left[\sum_{u\in V} \sum_{v\sim u} w(u,v)^\frac{p}{2} \Vert f(v) - f(u) \Vert_q^p \right]^\frac{1}{p}\\
&= \left[\sum_{u\in V} \sum_{v\sim u} \left( \sum_{j=1}^{d}  w(u,v)^\frac{q}{2} \vert f(v)_j - f(u)_j \vert^q \right)^\frac{p}{q} \right]^\frac{1}{p}.
\end{split}
\end{align}
\dt{
The advantage of using the general $p$-$q$ norm \eqref{eq:p-q-norm} is that it captures many interesting regularization terms from the literature, e.g., classical Tikhonov regularization ($p=q=2$), anisotropic total variation regularization ($p=q=1$), and isotropic total variation regularization ($p=1, q=2$). These regularization terms are widely used for denoising monochromatic and also vector-valued signals, e.g., see \cite{elmoataz2008,Moeller14} and references therein.
Depending on the choice of the parameters $p,q \geq 1$ we are able to analyze different regularization techniques in a unified framework in Section \ref{s:cut_pursuit} and incorporate different a-priori knowledge about the expected solutions of point cloud sparsification in Section \ref{s:numerics}.
}

\subsection{Graph $p$-Laplace operator}
\label{ss:graph_p_laplacian}
The continuous $p$-Laplace operator is an example of a second-order differential operator that can be defined on finite weighted graphs. It allows the translation of various partial differential equations to the graph setting and it has been used for applications in machine learning and image processing. For a detailed discussion of the graph $p$-Laplacian and its variants we refer to \cite{elmoataz2015}.

Based on the first-order partial difference operators introduced in \eqref{eq:gradient} and \eqref{eq:adjoint} one is able to formally derive a family of graph $p$-Laplace operators $\Delta_{w,p} \colon \mathcal{H}(V) \rightarrow \mathcal{H}(V)$ by minimization of the $p$-$q$-norm defined in \eqref{eq:p-q-norm} above. There are two special cases that lead to different definitions of the graph $p$-Laplace operator. For this paper we will derive a multidimensional version of the real $p$-Laplacian introduced in \cite{elmoataz2008}. For the sake of simplicity we assume that the finite weighted graph $G=(V,E,w)$ is undirected and has a symmetric weight function $w\in \mathcal{H}(E)$, i.e. $w(u,v) = w(v,u)$, in the following. Let $\vert\nabla_wf(u,v)\vert$ denote the point-wise absolute value in the gradient $\nabla_w f(u,v)$ and $\cdot$ be a point-wise product between vectors. Then we define 
\begin{align}\label{eq:pq-laplace}
\begin{split}
\Delta_{w,p;q} f(u) &=\frac{1}{2}\operatorname{div}_w\left(\Vert \nabla_w f\Vert_q^{p-q} \nabla_w f \cdot \vert \nabla_wf\vert^{q-2}\right)\\
& = \sum_{v\sim u} w(u,v)^\frac{p}{2}  \left\Vert f(v)-f(u) \right\Vert_q^{p-q}  (f(v)-f(u)) \cdot \vert f(v)-f(u)\vert^{q-2}.
\end{split}
\end{align}
More details on the computation of \eqref{eq:pq-laplace} can be found in Appendix \ref{app:pqlap}.\par
For the special case $p=q$ we get the multidimensional \textit{anisotropic} $p$-Laplacian given as:
\begin{align}\label{eq:pq-laplace_aniso_multi}
\Delta^a_{w,p} f(u) = \sum_{v\sim u} w(u,v)^\frac{p}{2}  \nabla_w f(u,v) \cdot \vert \nabla_wf(u,v)\vert^{p-2}.
\end{align}
On the other hand, if we choose $q=2$ we get the multidimensional \textit{isotropic} $p$-Laplacian
\begin{align}\label{eq:p-laplace_iso_multi}
\begin{split}
\Delta^i_{w,p} f(u) = \sum_{v\sim u} w(u,v)^\frac{p}{2}  \left\Vert f(v)-f(u) \right\Vert_2^{p-2}  \nabla_w f(u,v).
\end{split}
\end{align}

Note, in the terminology of \cite{elmoataz2015} both of these $p$-Laplacian would be called \emph{anisotropic} since the authors discussed only the one-dimensional case of vertex and edge functions. In this context the term \emph{isotropic} describes the relationship between neighbor vertices. In our more general case we relate the term \emph{isotropic} to the coupling of coordinates along all dimensions. Also note that in the anisotropic case the inner terms decouple and allow for an pairwise independent computation.

For $p=q=2$ we obtain a notion of a classical linear operator known as the unnormalized graph Laplacian, now in multiple dimensions, as
\begin{equation*}
  \Delta_{w}f(u) \ = \  \sum_{v\sim u} w(u,v)\left(f(v) - f(u)\right).
\end{equation*}

\section{Cut Pursuit for point cloud sparsification}
\label{s:cut_pursuit}
In this section we present our methodology for efficiently computing sparse point clouds using variational graph methods. Our approach is inspired by the Cut Pursuit algorithm proposed in \cite{cutloic,LoicDiss}. It can be applied for minimizing an energy functional $J$ on a finite weighted graph $G = (V,E,w)$ on the set $\mathcal{H}(V)$ given as
\begin{align}\label{eq:generalproblem}
\Bigl\lbrace J(f) = D(f,g) + \alpha  R(f) \Bigr\rbrace \ \rightarrow \ \underset{f\in \mathcal{H}(V)}{\operatorname{argmin}},
\end{align}
for which $\alpha >0$ is a fixed regularization parameter, $D$ is a differentiable, convex data fidelity term with the original data given as $g$, and $R$ is a convex regularization functional, which is decomposable into differentiable and non-differentiable parts and for which directional derivatives in $\mathcal{H}(V)$ exist. 

For point cloud sparsification we use a variational model that has already been proposed for this task in \cite{elmoataz2008}. 
However, in this paper we investigate a more general variant of this model. In particular, we focus on optimizing the following family of variational denoising problems for a fixed regularization parameter $\alpha > 0$
\begin{equation}
\label{eq:problem}
\Bigl\lbrace J(f) = \frac{1}{2}\Vert f-g\Vert_2^2 + \frac{\alpha}{2p}\Vert \nabla_w f \Vert_{{p;q}}^p \Bigr\rbrace \ \rightarrow \ \underset{f\in \mathcal{H}(V)}{\operatorname{argmin}}~ \tag{P}
\end{equation}
for \fg{$q, p\geq 1$} using the notation introduced in Section \ref{ss:graph_functions}, i.e., we minimize a $L^2$ data fidelity term together with a convex, (possibly) non-smooth regularization functional. Many algorithms for computing solutions to \eqref{eq:problem} are known in the literature, cf., e.g., \cite{chambolle_introduction} and references therein.

\subsection{Optimization via Cut Pursuit}\label{ssec:cutpursuit}
Instead of computing respective minimizers of the variational problem \eqref{eq:problem} by performing a (potentially) computational-heavy optimization directly on all vertices of the graph $G$, we follow the idea of the Cut Pursuit algorithm proposed by Landrieu and Obozinski in \cite{cutloic}. Here, the minimization of $J$ is done by an alternating iteration scheme that successively divides the set of vertices $V$ into increasingly smaller subsets and solves the original optimization problem on the relatively few vertices that represent the subsets induced by the partition. 
For this we first need the notion of the directional derivative of $J$ in terms of vertex functions.
\begin{defi}\textbf{(Directional derivative)}\\
	Let $J: \mathcal{H}(V) \rightarrow \mathbb{R}$ be a functional. Then the \emph{directional derivative} at a point $f\in \mathcal{H}(V)$ in direction $\vec{d}\in \mathcal{H}(V)$ is defined as
	\begin{equation*}
	J'(f;\vec{d}) \ = \ \lim_{t\rightarrow 0} \frac{J(f+t\vec{d})-J(f)}{t}
	\end{equation*}
	if the limit exists.
\end{defi}

\fg{
\dt{In the following, we extend the derivation of} the Cut Pursuit algorithm proposed in \cite{cutloic} \dt{to the case of the general regularization term}
\begin{equation}\label{eq:regularizer}
    R(f) = \frac{1}{p}\Vert \nabla_w f \Vert_{{p;q}}^p.
\end{equation} 
\dt{We begin by introducing the needed notation and basic }definitions. \fg{We start by defining two sets of edges in which the regularization functional $R$ is differentiable and non-differentiable as $S$ and $S^c$, respectively. Also we will denote $$ w(A,B) = \sum_{(u,v)\in A\times B} w(u,v).$$} Since we want to \dt{compute the solution of \eqref{eq:problem}} via successive splitting of the vertex set $V$ we introduce \dt{the partition of $V$ into subsets $A_1,\ldots,A_m \subset V$ as:}
\begin{equation}
    \label{eq:Pi}
    \Pi \ := \ \big\{ A_i \subset V \: | \: i\in I = \lbrace1,\ldots,m\rbrace, ~A_i \cap A_j = \emptyset \text{ for } i\neq j, ~V = \dot{\cup}_{i=1}^m A_i \big\}.
\end{equation}
\dt{Based on the} partition $\Pi$ we define the \emph{reduced graph} $G_r = (V_r, E_r, w_r)$ which is given by the vertex set $V_r \coloneqq \Pi$, the edge set
	\begin{equation}\label{eq:reducedEdgeset}
	E_r = \big\{ (A,B) \in V_r \times V_r \ \big| \  \big(A\times B\big)\cap E \neq \emptyset\big\},
	\end{equation}
and the reduced weight function as
\begin{equation}\label{eq:reducedWeights}
w_r\colon E_r \rightarrow \mathbb{R}_+ \ \text{ with } \ w_r(A,B) = \sum_{(u,v) \in (A\times B)\cap E} w(u,v) \ .
\end{equation}
\dt{Furthermore, we} define the characteristic function $1_A$ \dt{for a subset $A \subset V$} as
\begin{align}
    1_A(u) = \begin{cases}
        1, \quad\text{if } u \in A\\
        0, \quad\text{else.}
    \end{cases}
\end{align}
With this setting we can say a function $f\in \Hil(V)$ is piecewise constant on the sets $A\in \Pi$ with a value $c_A \in \mathbb{R}^d$ if
\begin{align}
    f = \sum_{A\in \Pi} 1_A c_A. 
\end{align}

Thus, we can define a vertex function $c: \Pi \rightarrow \mathbb{R}^d$ on the reduced set $V_r = \Pi$ \dt{such that} $c\in \Hil(\Pi)$. Let $m = \vert \Pi\vert$ \dt{be the cardinality of $\Pi$, i.e., the number of subsets $A_i \subset V$ induces by the partition $\Pi$,} then $\Hil(\Pi) \simeq \mathbb{R}^{m\times d}$ and we can write $c = \big( c_A\big) \in \mathbb{R}^{m\times d}$ as a vector.
In Section \ref{ssec:red_problem} we will discuss in detail how the reduced \dt{vertex} functions in $\Hil(\Pi)$ \dt{are related} to piecewise constant \dt{vertex} functions in $\mathcal{H}(V)$.
}
\fg{
\dt{So far we have not required that the partition $\Pi$ is an optimal partition of $V$ for solving \eqref{eq:problem}. Thus, in the following we aim} to find a subset $B\in \mathcal{P}(V)$ that splits the current partition \dt{$\Pi_k$ into new subsets} at the borders of $B$ and its complement $B^c$ \dt{in a way that decreases the energy functional $J$} the most \dt{and leads to a new partition $\Pi_{k+1}$}. \dt{The following proposition states how one can compute such an optimal subset $B$}. To learn more about the \dt{exact derivation of this result} we \dt{refer the interested reader} to Appendix \ref{app:deriviationCutPursuit}.
}
\clearpage
\dt{
\begin{prop}\label{prop:derivationOfCP}~\\
    Let $\Pi$ be the current partition of $V$ and $c\in \Hil(\Pi)$ a \dt{vertex} function on the reduced graph $G_r = (V_r, E_r, w_r)$. Let $f_\Pi\in \mathcal{H}(V)$ be a vertex function that is piecewise constant on the sets in $\Pi$ and is given as $f_\Pi = \sum_{A\in \Pi} 1_Ac_A$. Let $g\in \mathcal{H}(V)$ be \dt{a vertex function representing the given data} and $p, q\geq 1$ . \fg{Also let $\gamma_B^A, \gamma_{B^c}^A \in \mathbb{R}_+^d$ two descent directions for each set $A\in\Pi$.} Then a subset $B^*\in \mathcal{P}(V)$ that decreases the energy 
    $$ J(f) = D(f,g) + \frac{\alpha}{2} R(f)$$ with the regularizer $R(f) = \frac{1}{p}\Vert \nabla_w f\Vert^p_{p;q}$ the most can be found by solving 
    \begin{align}\label{eq:prop}
        B^*\in \argmin_{B\in \mathcal{P}(V)} \ \langle \nabla D(f_\Pi,g) + \alpha\nabla R_S(f_\Pi), \vec{\gamma} \rangle + \frac{\alpha}{2} R'_{S^c}(f_\Pi; \vec{\gamma}). 
    \end{align}
    with $$\vec{\gamma} = \sum_{A\in \Pi} 1_{A\cap B} (\gamma_B^A + \gamma_{B^c}^A).$$
\end{prop}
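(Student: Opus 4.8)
The plan is to read \eqref{eq:prop} as a \emph{steepest-descent selection} problem: among all perturbations of the current piecewise-constant iterate $f_\Pi$ that are compatible with refining $\Pi$ along a single binary cut $\{B,B^c\}$, one seeks the one whose directional derivative of $J$ is most negative. Accordingly, I would start from the definition of the directional derivative and write
\begin{equation*}
J'(f_\Pi;\vec\gamma) = D'(f_\Pi,g;\vec\gamma) + \frac{\alpha}{2}\,R'(f_\Pi;\vec\gamma),
\end{equation*}
and then split the edge sum defining $R$ into the set $S$ on which $R$ is differentiable at $f_\Pi$ and its complement $S^c$. Since $D$ is differentiable and $R$ is differentiable along $S$, these two contributions are linear in the direction and collapse into the single inner product $\langle \nabla D(f_\Pi,g) + \alpha\nabla R_S(f_\Pi),\vec\gamma\rangle$ (the coefficient $\alpha$ rather than $\tfrac{\alpha}{2}$ arising from the symmetric edge convention, as discussed below), leaving only the genuinely non-smooth term $\tfrac{\alpha}{2}R'_{S^c}(f_\Pi;\vec\gamma)$ to be treated separately. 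This already exhibits the shape of the right-hand side of \eqref{eq:prop}.

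The structural observation I would exploit is the support of $\vec\gamma$. Because $f_\Pi$ is constant on every $A\in\Pi$, the differentiable set $S$ consists precisely of the edges joining distinct pieces, while $S^c$ collects the within-piece edges, where $f_\Pi(u)=f_\Pi(v)$ and the $p$-$q$-norm has a kink. Writing $\vec\gamma = \sum_{A\in\Pi}1_{A\cap B}(\gamma_B^A+\gamma_{B^c}^A)$, I would note that $\vec\gamma$ is constant on $A\cap B$ and vanishes on $A\cap B^c$; hence the jump $\vec\gamma(v)-\vec\gamma(u)$ across a within-piece edge is nonzero \emph{only} for edges crossing the cut, i.e.\ with one endpoint in $A\cap B$ and the other in $A\cap B^c$. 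Using positive one-homogeneity of the norm at the kink (the directional derivative of $x\mapsto\tfrac1p\|x\|_q^p$ from the origin being $\|\delta\|_q$ when $p=1$ and $0$ when $p>1$), the term $R'_{S^c}(f_\Pi;\vec\gamma)$ reduces to a weighted sum over exactly the cut edges, that is, to a graph-cut functional in $B$. This is what converts the steepest-descent problem into the combinatorial minimization over $B\in\mathcal{P}(V)$, with the descent directions $\gamma_B^A,\gamma_{B^c}^A\in\mathbb{R}_+^d$ prescribed as in the Cut Pursuit framework.

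With both pieces identified, I would assemble $J'(f_\Pi;\vec\gamma)$ into the objective of \eqref{eq:prop} and argue that minimizing it over $B$ selects the cut admitting the steepest first-order decrease of $J$, so that the associated refinement of $\Pi$ decreases the energy the most; for the complete derivation I would defer to Appendix~\ref{app:deriviationCutPursuit}.

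The hard part will be the treatment of the non-differentiable contribution $R'_{S^c}(f_\Pi;\vec\gamma)$: one must compute the one-sided directional derivative of the (possibly non-smooth) $p$-$q$-regularizer at edges where the gradient collapses to zero, verify that within-side edges contribute nothing while cut edges contribute the weighted jump norm, and keep the orientation bookkeeping consistent throughout. In particular, the factor discrepancy between the coefficient $\alpha$ multiplying $\nabla R_S$ and the coefficient $\tfrac{\alpha}{2}$ multiplying $R'_{S^c}$ must be traced back to the symmetric double-counting of undirected edges in $\Vert \nabla_w f\Vert_{p;q}^p$ combined with the one-homogeneity of the kink; this is routine but must be carried out carefully to land on \eqref{eq:prop} exactly.
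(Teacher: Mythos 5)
Your outline follows essentially the same route as the paper's own derivation in Appendix \ref{app:deriviationCutPursuit}: split $R$ at $f_\Pi$ into the differentiable part $R_S$ (between-piece edges) and the non-differentiable part $R_{S^c}$ (within-piece edges), linearize $D$ and $R_S$, observe that only edges crossing the cut inside a piece contribute to the kink term, and account for the $\alpha$ versus $\tfrac{\alpha}{2}$ discrepancy by the double counting of undirected edges. The one substantive difference is the starting point: the paper does not begin from $J'(f_\Pi;\vec{\gamma})$ but from the finite energy difference $J(\tilde{f}_B)-J(f_\Pi)$ for the explicitly perturbed function $\tilde{f}_B=\sum_{A}\bigl(1_{A\cap B}(c_A+\gamma_B^A)+1_{A\cap B^c}(c_A-\gamma_{B^c}^A)\bigr)$, Taylor-expands, and only at the end rescales by $\varepsilon$ to recover the directional derivative. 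That detour is not cosmetic, and it is where your proposal has a genuine gap.

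The gap is the identification of the direction. The actual perturbation is $\tilde{f}_B-f_\Pi=\sum_{A}\bigl(1_{A\cap B}\gamma_B^A-1_{A\cap B^c}\gamma_{B^c}^A\bigr)$, which is \emph{not} the $\vec{\gamma}=\sum_{A}1_{A\cap B}(\gamma_B^A+\gamma_{B^c}^A)$ appearing in \eqref{eq:prop}; in particular it does not vanish on $A\cap B^c$, contrary to the support description you give. The proposition claims that minimizing the directional derivative in direction $\vec{\gamma}$ finds the $B$ that decreases the energy the most, so one must show the two objectives have the same minimizers. The paper does this by rewriting $1_{A\cap B^c}=1_A-1_{A\cap B}$, so that $\vec{\gamma}-(\tilde{f}_B-f_\Pi)=\sum_A 1_A\,\gamma_{B^c}^A$ is independent of $B$ and piecewise constant on $\Pi$; the linear terms $\langle\nabla D+\alpha\nabla R_S,\cdot\rangle$ then change only by a $B$-independent constant (the $\kappa_D,\kappa_R$ of the appendix), and $R'_{S^c}$ is unchanged because it depends only on jumps across within-piece edges, where a $\Pi$-piecewise-constant shift has zero jump. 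Without this step your argument establishes the shape of the objective in \eqref{eq:prop} but not that its minimizer is the set achieving the steepest energy decrease; deferring to the appendix is not available here, since that appendix is the proof being reconstructed. On the plus side, your treatment of the kink via the one-sided homogeneity of $x\mapsto\tfrac1p\Vert x\Vert_q^p$ (directional derivative $\Vert\delta\Vert_q$ for $p=1$, zero for $p>1$) is cleaner than the paper's, which does not track the $\varepsilon^{p-1}$ factor in its cut term for $p>1$.
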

\begin{proof}
    see Appendix \ref{app:deriviationCutPursuit}
\end{proof}
}
\fg{
\dt{Proposition \ref{prop:derivationOfCP} shows us} how to find a new partition from a given $f_\Pi$, \dt{which directly leads to the question} of how to find an optimal $f_\Pi$ for some given partition $\Pi$. \dt{This question can be formulated as the following} optimization problem 
\begin{align}
	\label{eq:optimal_fpi}
    f_\Pi = \argmin_{f\in \Hil(\Pi)} \ D(f,g) + \frac{\alpha}{2} R_r,
\end{align}
which is defined on the reduced graph $G_r = (V_r, E_r, w_r)$ with $$ R_r(f) = \frac{1}{p} \Vert \nabla_{w_r} f\Vert_{p;q}^p.$$ \dt{The solution $f_\Pi$ of \eqref{eq:optimal_fpi}} can be then plugged into formula \eqref{eq:prop} and \dt{consequently} a new partition can be computed. 

We have gathered the \dt{necessary} ingredients to formulate the \dt{original} Cut Pursuit algorithm proposed in \cite{cutloic} to solve \eqref{eq:generalproblem} \dt{for the special case of} $p=q=1$ and $\vec{\gamma} = 1_B$.}
\begin{alg}[Cut Pursuit]
\label{alg:cut_pursuit}
\begin{gather*}
\left\lbrace J'(f_\Pi; \1_B) \right.  \left.= \langle \nabla D(f_\Pi,g) , \1_B  \rangle + \alpha\langle \nabla R_S(f_\Pi), \1_B \rangle + \frac{\alpha}{2} R_{S^c}'(f_\Pi; \1_B) \right\rbrace \rightarrow \min_{\dt{B \in \mathcal{P}(V)}}\\[0.2cm]
f_\Pi \ = \ \operatornamewithlimits{arg\min}_{f\in \mathcal{H}(\Pi)} D(f,g)+ \fg{\frac{\alpha}{2}} R(f).
\end{gather*}
\end{alg}
\dt{
The subset $B$ is a-priori unknown and has to be chosen from all $2^n$ possible subsets of the power set $\mathcal{P}(V)$. The indicator function $\1_B$ can be interpreted as unknown descent direction of the energy functional $J$. The set $\Pi$ is again the current partition of $V$. 
\fg{
The Alternating Minimization Scheme} \ref{alg:cut_pursuit} is an iterative method to compute a new partition $\Pi^{k+1}$ of $V$ by refining the current partition $\Pi^k$ based on a minimum graph cut that induces the set $B \in \mathcal{P}(V)$.
This leads to a consecutive decrease of the original energy functional \eqref{eq:generalproblem}, which is approximated by a sequence of reduced problems given on the subsets of the current partition $\Pi^{k+1}$ of $V$.
In \cite{cutloic} the authors show that in case certain conditions are met the alternating iteration scheme in Algorithm \ref{alg:cut_pursuit} converges to a solution of the original variational problem in \eqref{eq:generalproblem}. 
The main advantage of this coarse-to-fine approach is that it leads to very efficient solvers for optimization problems on finite weighted graphs, which we will exploit in the following for the task of point cloud sparsification.
}

In this work we \fg{not only introduce a new class of regularizers \dt{for Cut Pursuit}, that even can be isotropic, we also} deviate from the original Cut Pursuit formulation and allow the choice of two different regularization functionals $R,Q$ and \dt{corresponding} parameters $\alpha, \beta$. This approach allows us to control the properties of the solutions for the task of point cloud sparsification and gives additional flexibility as we will show in Section \ref{s:numerics}.
\dt{
Indeed, one can only guarantee convergence to a minimizer of the original functional $J$ in \eqref{eq:problem} in the special case $R=Q$ and $\alpha=\beta$.
However, decoupling the regularization terms in the original Cut Pursuit scheme \ref{alg:cut_pursuit} has a major advantages for point cloud sparsification. It allows to control the compression rate of the resulting point cloud, regulated by the term $\alpha R(\cdot)$, independently of the enforced smoothness, regulated by the term $\beta Q(\cdot)$. Thus, one can choose to have a very smooth point cloud without giving up any points ($\alpha <\!\!\!< \beta$) or a strongly compressed point cloud without any smoothness constraints ($\alpha >\!\!\!> \beta$).
This additional flexibility allows for a wider range of applications using the same methodology.
}

Based on our argumentation above, we propose the following alternating minimization scheme as a modified variant of the original Cut Pursuit scheme.
\begin{alg}[Modified Cut Pursuit]
\label{alg:proposed_scheme}\small
\begin{gather}
\begin{split}
\Bigl\lbrace J'(f_\Pi;& \1_B) = \langle \nabla D(f_\Pi,g) , \1_B  \rangle + \alpha\langle \nabla R_S(f_\Pi), \1_B \rangle + \alpha R_{S^c}'(f_\Pi; \1_B) \Bigr\rbrace \rightarrow \min_{B \dt{\in \mathcal{P}(V)}}\label{eq:partitionproblem}
\end{split}\tag{P1}\\
 \: f_\Pi = \operatornamewithlimits{arg\min}_{f\in \mathcal{H}(\Pi)} D(f,\bar{f}_0)+ \beta Q(f).\label{eq:reducedproblem} \tag{R1}
\end{gather}\normalsize
\end{alg}
\fg{
As \dt{Proposition \ref{prop:derivationOfCP} and the discussion} in Appendix \ref{app:deriviationCutPursuit} \dt{shows} the partition} problem in \eqref{eq:partitionproblem} is well-defined. The optimization of \eqref{eq:partitionproblem} yields a binary partition induced by the subset $B$, which \dt{induces} a new partition $\Pi$.
This new partition $\Pi $ then defines a span of piecewise constant functions on which we solve the reduced problem \eqref{eq:reducedproblem}. Evidently this reduced problem can be solved more efficiently than the original problem \eqref{eq:problem}.

\fg{We want to emphasize that the chosen regularizer
\begin{align*}
R(f) = \frac{1}{2p}\big\Vert \nabla_w f \big\Vert_{{p;q}}^p
\end{align*} 
has different differentiability properties for different choices of $p$ and $q$ that we will investigate now.}
As becomes clear the regularization functional $R$ is differentiable iff \fg{$p>1, q\geq 1$} and the derivative is given as
\small 
\begin{align}\label{eq:derivative}
\begin{split}
\frac{\partial }{\partial f(u)_j}R(f) = \sum_{(u,v)\in E} w(u,v)^\frac{p}{2}  \left\Vert f(v)-f(u) \right\Vert_q^{p-q} {|f(v)_j - f(u)_j|}^{q-2} (f(u)_j-f(v)_j).
\end{split}
\end{align}\normalsize
For the interesting non-smooth case, i.e., $q \geq p = 1$, we can show that the directional derivative exists and the regularization functional $R$ can be split into differentiable and non-differentiable parts. Furthermore, we can show that for $p=q$ the expression in \eqref{eq:derivative} corresponds to the multidimensional anisotropic graph $p$-Laplacian, while for $q=2, p\geq1$ it corresponds to the multidimensional isotropic graph $p$-Laplacian as introduced in Section \ref{ss:graph_p_laplacian}. For details on our observations we refer the interested reader to Appendix \ref{app:cut}.

Clearly, Algorithm \ref{alg:proposed_scheme} is a descent method that decreases the energy functional in \eqref{eq:problem} in every iteration step. The proposed scheme is stopped once a minimizer is found and a further partitioning would not decrease the energy functional anymore. At this stage the desired level-of-detail is reached based on the chosen regularization parameters $\alpha$ and $\beta$. Note that this approach can be interpreted as a hierarchical graph method, e.g., as described in \cite{elmo_hierachical}.
\begin{rmk}
For $\alpha=\beta$ and $R = Q$ being an anisotropic regularization functional, i.e., $q=p$ in \eqref{eq:regularizer}, we are able to derive similar convergence results as described in \cite{cutloic}. In particular the alternating iterative scheme converges to the unique solution of the original problem \eqref{eq:problem}. For a given partition $\Pi = \lbrace A_1,\ldots,A_m \rbrace$ this problem has the solution 
\begin{equation*}
B \cap A_i = \emptyset \ \vee \ B \cap A_i= A_i \qquad \text{ for all } i=1,\ldots,m
\end{equation*}
iff a minimizer has been found. 
\end{rmk}
In the case of $\alpha R \neq \beta Q$ there are two potential issues concerning convergence: First of all, it may be possible that the partition $\Pi$ is not refined although the minimizer of \eqref{eq:reducedproblem} is not yet a minimizer of the original problem. Thus, we stop with a suboptimal solution. This is an issue that may appear in practice, however typically only at very fine levels such that the computed solution is already close to the optimum. Second, it might happen that $\Pi$ is refined although the solution of \eqref{eq:reducedproblem} is already globally optimal. In this case the solution after refinement is still the same, but the \dt{final refinement step slightly decreases the overall efficiency of the scheme}.

In Section \ref{ssec:graph_cuts} below we first discuss how to solve the \dt{minimum} partition problem in \eqref{eq:partitionproblem} and subsequently discuss the optimization of the reduced problem \eqref{eq:reducedproblem} using a primal-dual minimization method in Section \ref{ssec:red_problem}. 

\subsection{Computing the optimal partition via minimum graph cuts}\label{ssec:graph_cuts}
\fg{
In this section we investigate how to solve the partition problem \eqref{eq:partitionproblem} and how to build a new partition $\Pi$ from a computed $B\in \mathcal{P}(V)$. To compute the optimal $B$ we will use the well-known energy formulation of \cite{kolmogorov}, which then can be transferred to a flow graph. Computing the max flow of this graph results in \dt{minimizer} of the energy, and thus solves the partition problem. Afterwards, we show how the different flow graphs are defined for different \dt{values of the parameters} $p$ and $q$.}

\dt{
\subsubsection{Finding an optimal descent direction $\1_B$}
\label{sss:optimal_descent}
}
To determine an optimal descent direction $\1_B \in \mathcal{H}(V)$, i.e., the direction of steepest descent of $J$, one would need to minimize the directional derivative $J'(f_{\Pi};\vec{\gamma})$ with respect to all possible subsets $B \dt{\in \mathcal{P}(V)}$, which is known to be a NP-hard problem \fg{(cf. \cite{kolmogorov})}. \fg{Note again, that $\vec{\gamma} =  \sum_{A\in \Pi} 1_{A\cap B} (\gamma_{B}^A + \gamma_{B^c}^A)$ and thus \dt{is depending} on $1_B$.} On the other hand, if such an optimal subset $B \dt{\in \mathcal{P}(V)}$ is given, then a new partition $\Pi_{new}$ of $V$ can simply be generated by splitting each subset $A\subseteq \Pi$ of the previous partition $\Pi$ along the boundary of $B$ and $B^c$, such that $A$ is divided into (possibly) two smaller subsets $A_B = A\cap B$ and $A_{B^c} = A\cap B^c$. Note that this division given by $B$ can be performed on the whole vertex set $V$ but also on each subset $A\subseteq \Pi$ independently, as the partitioning is only getting finer while preserving the boundaries of previous partitions. This is an important feature for the implementation of parallelized optimization algorithms \dt{since every subset can be treated independently of the other subsets}. 

\begin{figure}
	\centering
	\subcaptionbox{Initial partition $\Pi = \{A_1,A_2,A_3,A_4\}$.}%
	[.47\textwidth]{\includegraphics[trim={0 0 0 0}, width=0.47\textwidth]{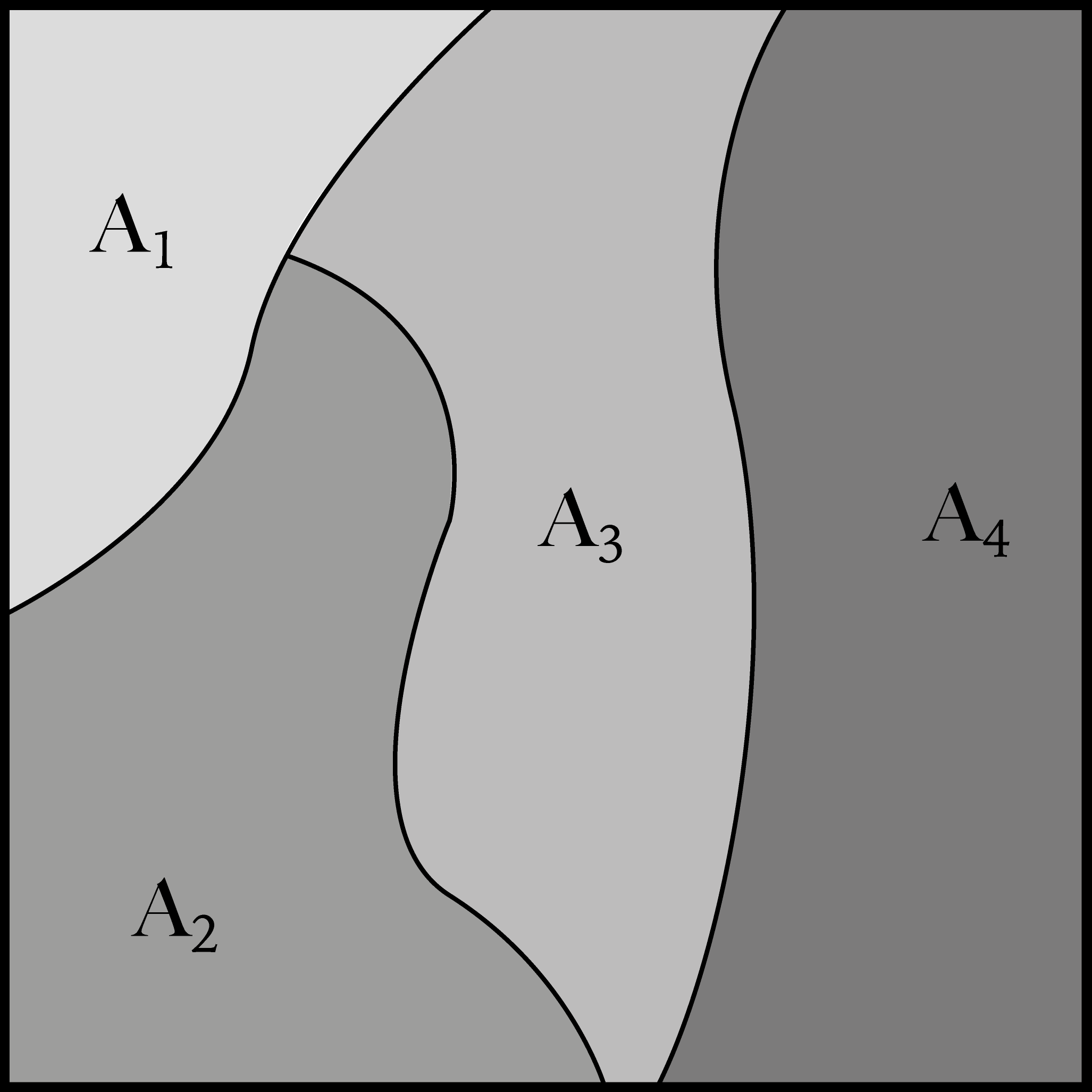} }\hfill
	\subcaptionbox{Steepest binary partition where $B \dt{\in \mathcal{P}(V)}$ is visualized by the white dashed set.}
	[.47\textwidth]{\includegraphics[trim={0 0 0  0}, width=0.47\textwidth]{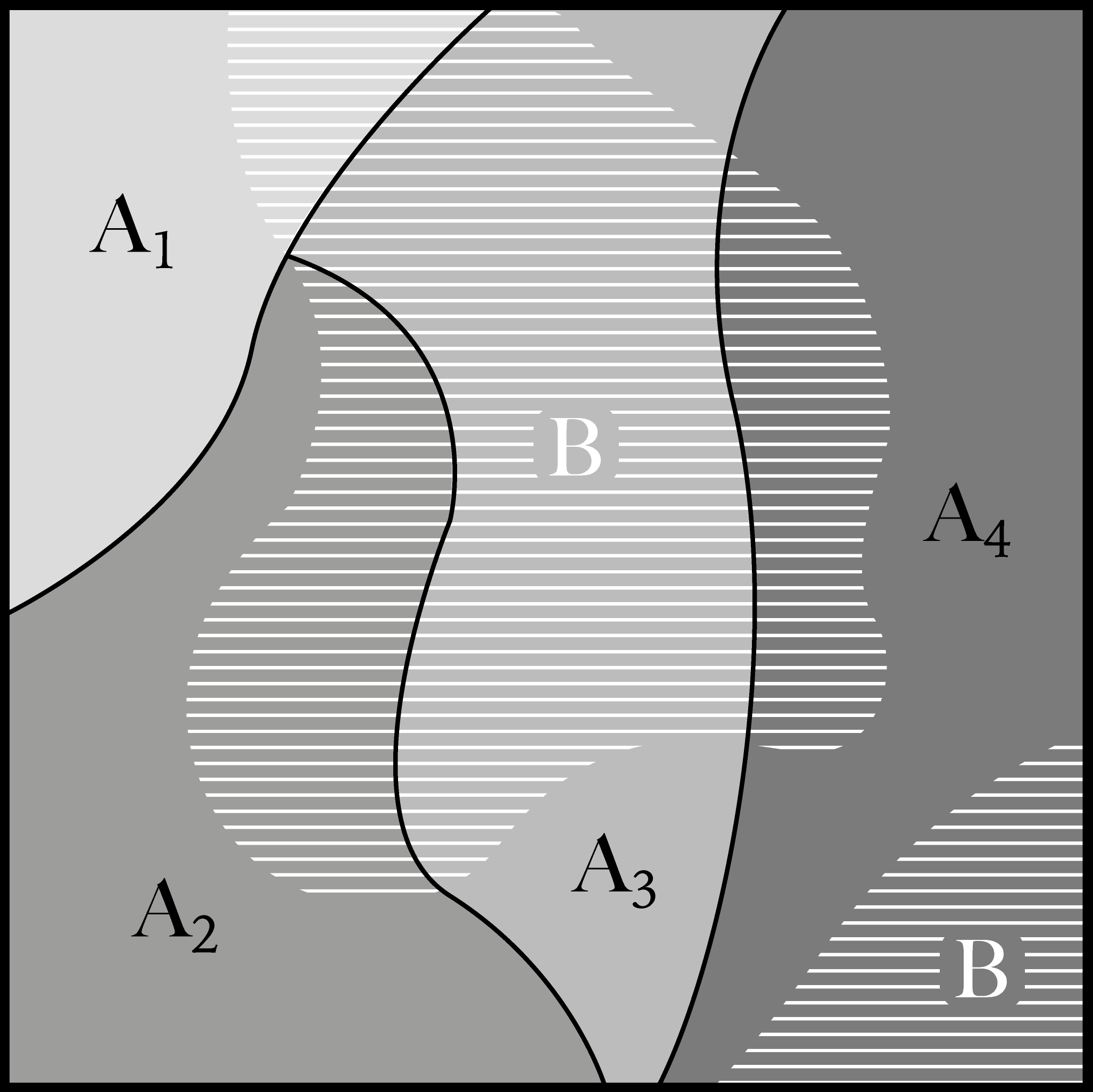} }\\[0.3cm]
	\subcaptionbox{Resulting partition\\ $\Pi_{new} = \{{A}_i \big| i\in [1,10]\}$ generated by the steepest binary cut and selecting connected components as the new partitions $A_i$.}%
	[.47\textwidth]{\includegraphics[trim={0 0 0  0}, width=0.47\textwidth]{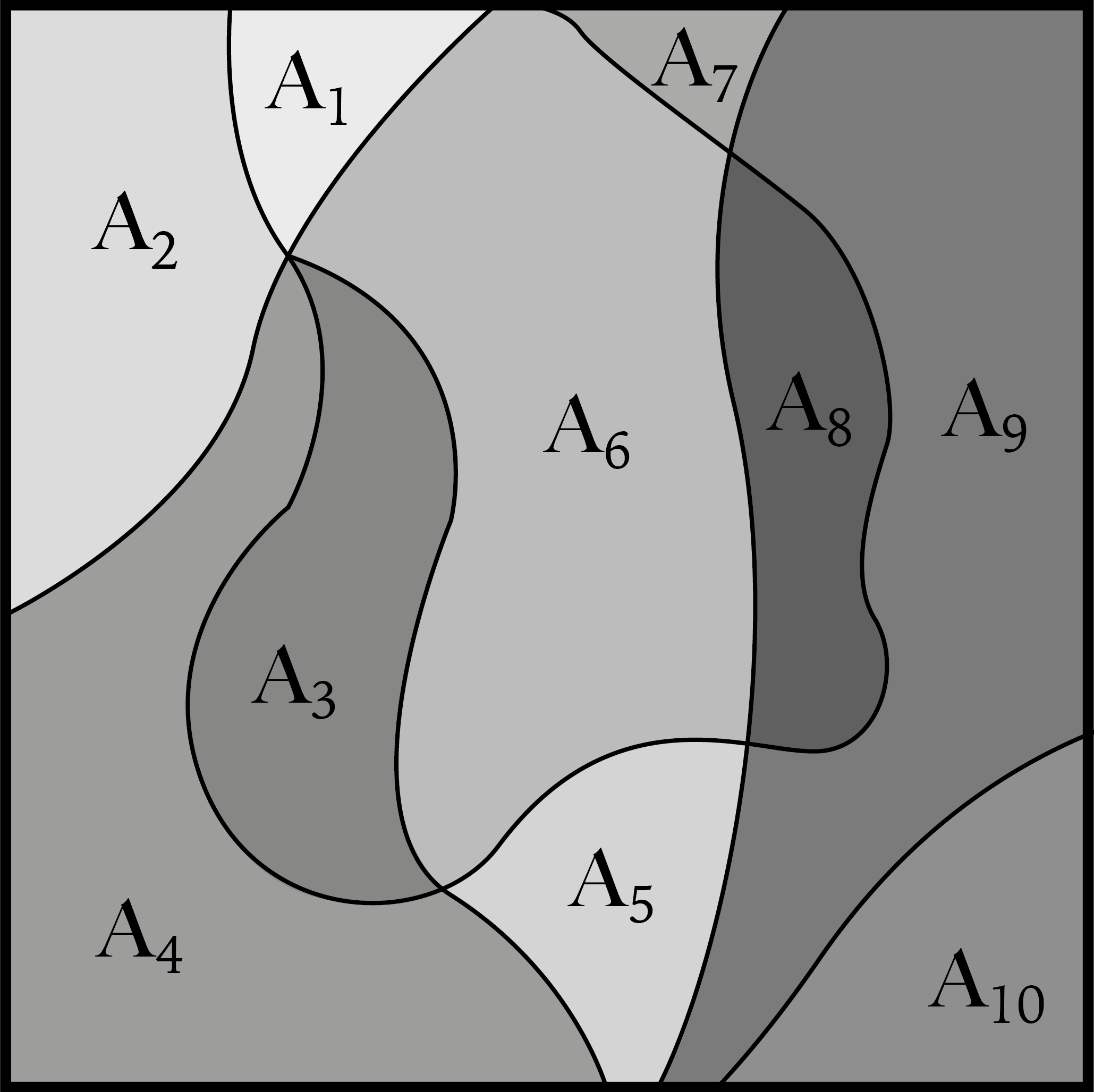} }\hfill
	\subcaptionbox{Resulting partition\\ $\Pi_{new} = \{{A}_i \big| i\in [1,8]\}$ generated by the steepest binary cut and selecting the new partition as $\Pi_{new}= (\Pi \cap B) \cup (\Pi \cap B^c)$.}%
	[.47\textwidth]{\includegraphics[trim={0 0 0  0}, width=0.47\textwidth]{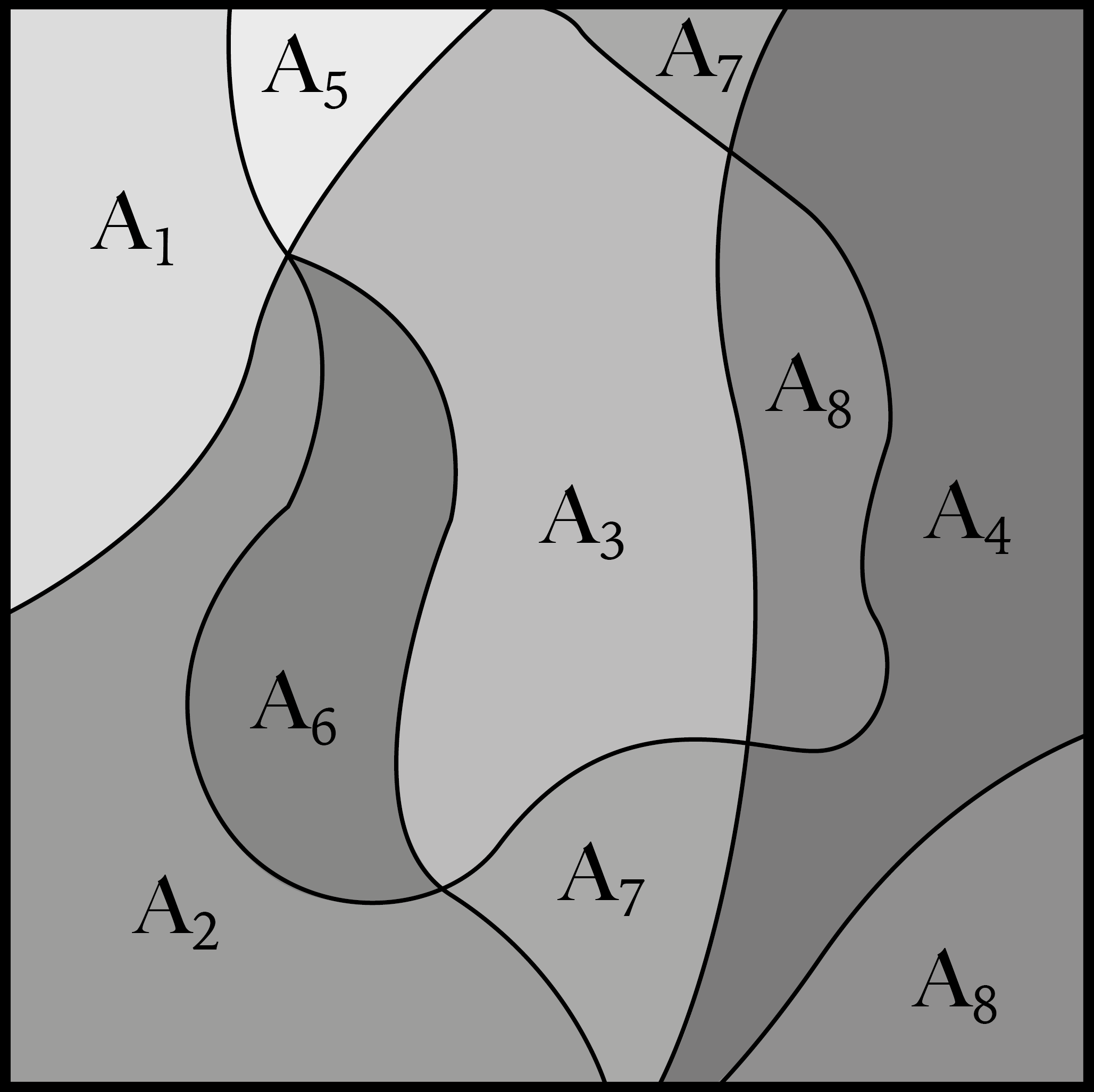} }
	\caption{\label{fig:partition} Illustration of two different methods to generate a new partition $\Pi_{new}$ from a given partition $\Pi$ and the set $B \dt{\in \mathcal{P}(V)}$  }
\end{figure}

There exist two possible options on how the subset $B \dt{\in \mathcal{P}(V)}$ can be used to generate a new partition $\Pi_{new}$ as illustrated in Figure \ref{fig:partition}. 
In the first variant the new partition can be written as $$\Pi_{new}= (\Pi \cap B) \cup (\Pi \cap B^c) = \bigl(\dot{\cup}_{i=1}^m A_i \cap B \bigr) \cup \bigl(\dot{\cup}_{i=1}^m A_i \cap B^c \bigr).$$ This means that one obtains a binary partition of each subset $A_i \subset \Pi$ leading to at most double the amount of subsets in $\Pi_{new}$ as compared to the previous partition $\Pi$.
\par The second variant treats every connected component \dt{$C$} of $\Pi \cap B$ and $\Pi \cap B^c$ as an own subset. 
\fg{Thus, the new partition can be written as 
\begin{align}\label{eq:NewPartitionConnComp}
    \Pi_{new} = \bigl(\dot{\cup}_{i=1}^m \dt{C}(A_i \cap B) \bigr) \cup \bigl(\dot{\cup}_{i=1}^m \dt{C}(A_i \cap B^c) \bigr).
\end{align}}
In this case a partition may lead to multiple new parts for each subset $A_i \subset \Pi$ as opposed to only two in the previous case. Hence, this strategy minimizes the energy at least as fast as the first strategy. In this paper we will focus only on the partition into connected components, since we aim for a fast sparsification of large point cloud data.

\begin{figure}[htb]
	\centering
	\subcaptionbox{Original graph $G$ with $8$ nodes and weights $w_{ij}$ connecting nodes $i$ and $j$. The initialization is the representation of the graph by one node $A_1$.}%
	[\textwidth]{\includegraphics[trim={0 0 0  0}, width=\textwidth]{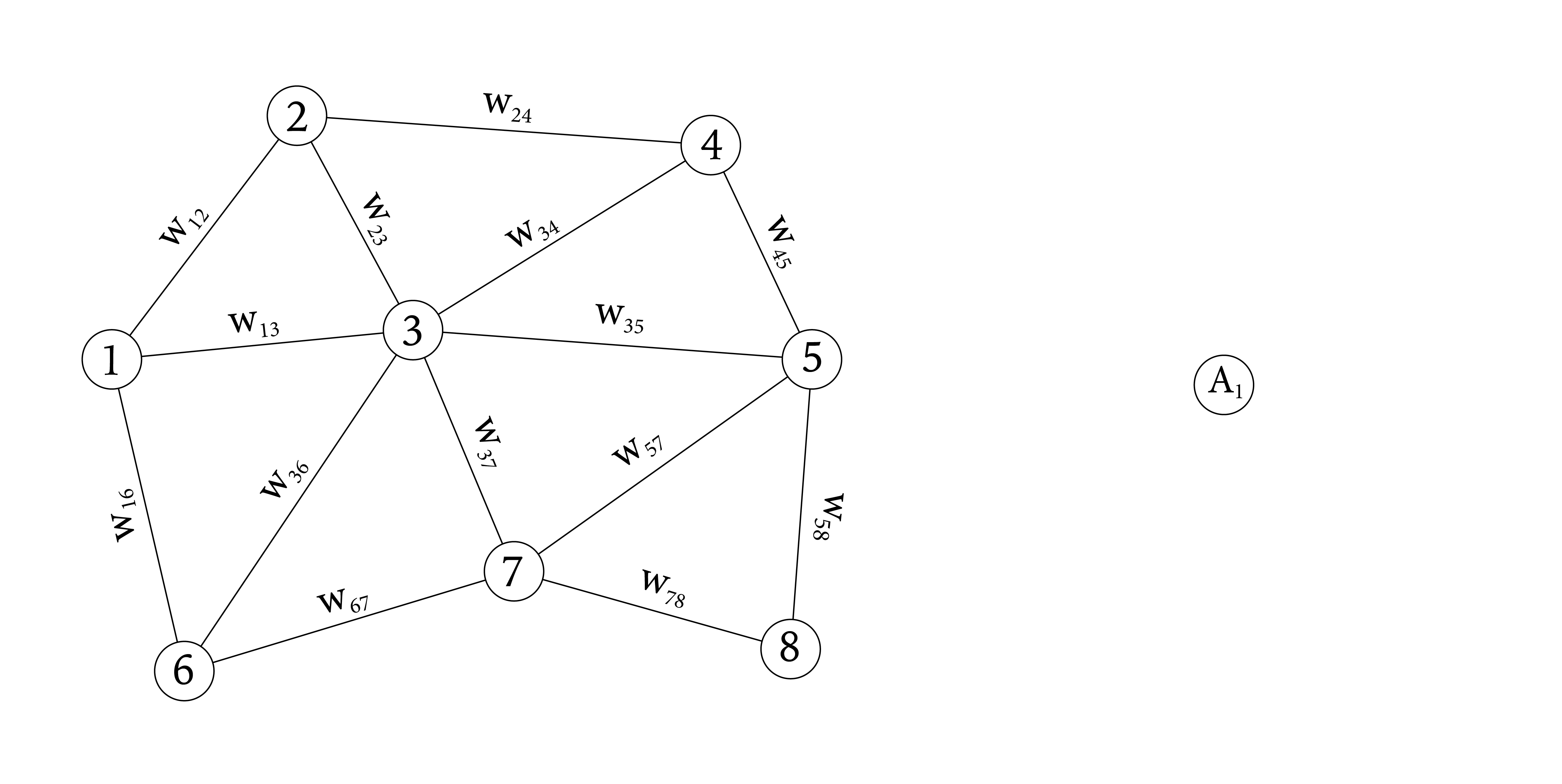}}
	\subcaptionbox{Cut (red line) dividing the graph $G$ into two subgraphs. This is represented as a graph with two nodes $A_1$ and $A_2$ connected by the edges that are cut between these two sets. The weights are the summed up weights of the connecting edges.}%
	[\textwidth]{\includegraphics[trim={0 0 0  0}, width=\textwidth]{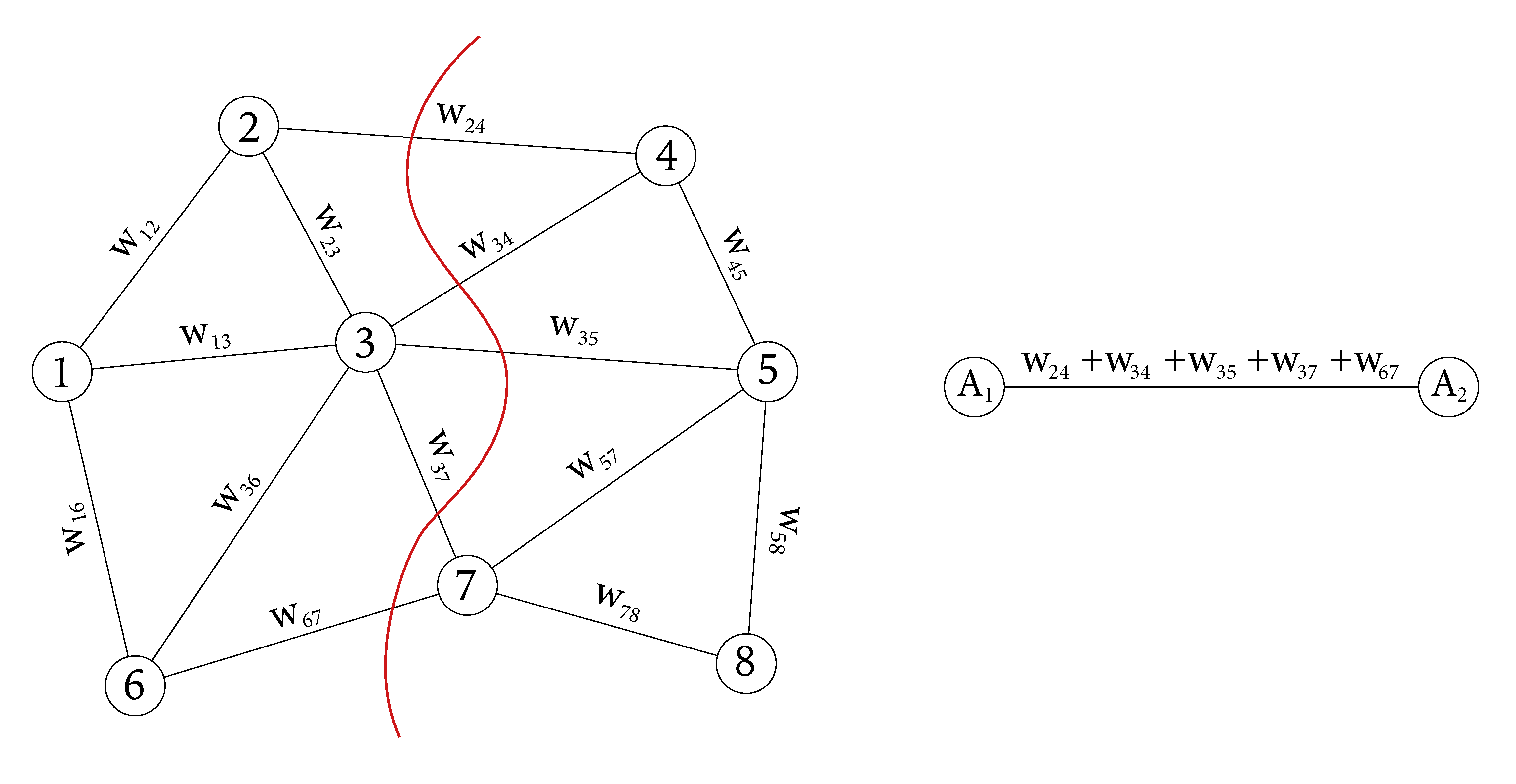}}
	\subcaptionbox{Another cut that cuts combined with the previous cut the graph $G$ into four subgraphs. The reduced graph is then represented by four nodes $A_1, A_2, A_3, A_4$ and the edges between the sets.}%
	[\textwidth]{\includegraphics[trim={0 0 0  0}, width=\textwidth]{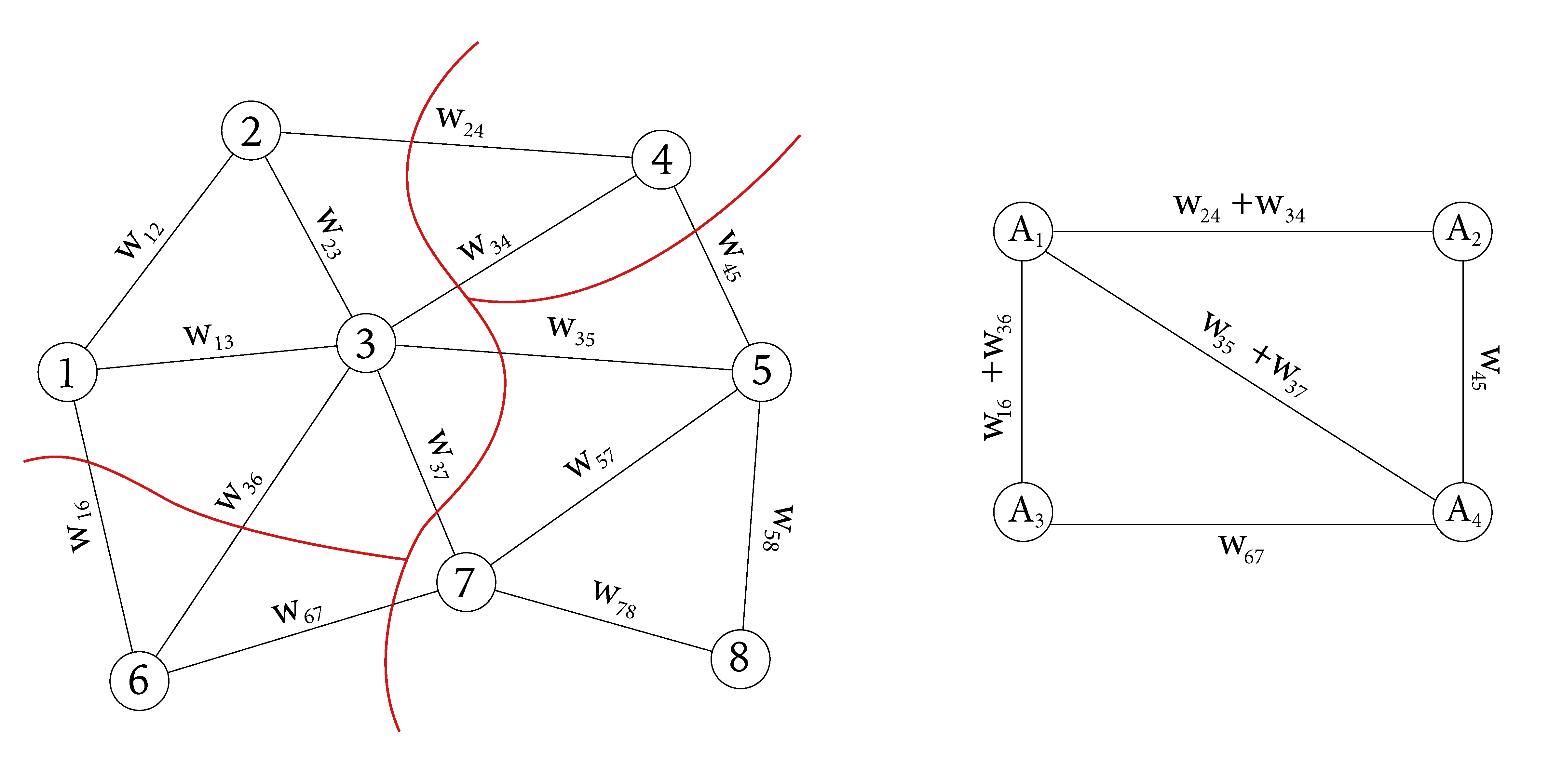}}
	\caption{\label{fig:reduced_graph}Illustration of an exemplary computation of a reduced graph by given cuts.}
	\label{fig:graph_cuts}
\end{figure}
In order to compute the optimal partition $\Pi$ based on some subset $B \dt{\in \mathcal{P}(V)}$ in each step of the alternating iteration scheme \eqref{eq:partitionproblem}, we recall the fact that if the minimization of the directional derivative $J'(f_\Pi; \vec{\gamma})$ is a binary partition problem and regular as described in \cite{kolmogorov}, minimizing the energy \eqref{eq:partitionproblem} is the same as computing a minimum cut of the corresponding flow graph for $J'(f_\Pi;\vec{\gamma})$. The regularity of $J'(f_\Pi;\vec{\gamma})$ is shown in Appendix \ref{app:submod}. In Figure \ref{fig:graph_cuts} we illustrate how a sequence of minimum graph cuts yields a sequence of reduced problems. Each reduced problem consists of a reduced set of vertices, where each vertex is a conglomerate of the original vertices within one subset $A_i$ of the partition $\Pi$ and the edges between these subsets are weighted by the sum of weights for cut edges in the original edge set.

\fg{As mentioned before we can build a flow graph corresponding to the energy given by $J'(f_\Pi; \vec{\gamma})$ and solve for $B$ with a graph cut by computing the max flow.}
The flow graph we consider in this work is defined as $G_{flow} = (V_{flow},E_{flow})$ with \fg{$V_{flow} = \lbrace 1, \ldots, dN\rbrace \cup \{s,t\}$, \dt{where} $N$ is the number of vertices} in the $d$-dimensional anisotropic case and \fg{$V_{flow} = \lbrace 1, \ldots, N\rbrace \cup \{s,t\}$} in the isotropic case. The anisotropic case is thus the $d$-fold vertex set of the original graph $G$ with two additional sink $t$ and source $s$ vertices. 
Note that this means in the anisotropic setting that each coordinate for every point of the point cloud data is modeled as an independent vertex in the flow graph.
The edge set of the flow graph is defined as $E_{flow} = \big\{ (u,v)\in V_{flow}\times V_{flow}\ \big| \ c(u,v) > 0 \big\}$, for which $c \in \mathcal{H}(E)$ is an edge function defining the edge capacities. 
These capacities are set in such a way, that the minimum cut of the flow-graph also minimizes the partition problem \eqref{eq:partitionproblem}. 
Note that one can compute the minimum graph cut on $G_{flow}$ by computing a solution of the equivalent maximum flow problem, for which efficient methods exist in the literature, e.g., cf. \cite{maxflowmincut}. 
For further details on this topic we refer to \cite{kolmogorov}.

In the following we describe how we set the capacities $c(u,v)$ for all edges $(u,v) \in E_{flow}$ of the flow graph. For the sake of simplicity, we will denote the set of differentiable directions as $S$ without an explicit case distinction of $S_1$ and $S_q$ as defined in Appendix \ref{app:C}.
Based on the directional derivatives for different values of $p$ and $q$ in Appendix \ref{app:C} we can tackle the partition problem \eqref{eq:partitionproblem} for $p,q\geq 1$. Let 
\begin{equation*}
\nabla J_S(f) \ = \ \nabla D(f,g) + \nabla R_S(f)\in \mathbb{R}^{Nd}
\end{equation*}
be the combined gradient of the differentiable parts of $J$.
Then the partition problem \eqref{eq:partitionproblem} can be rewritten as 
$$
\min_{B \dt{\in \mathcal{P}(V)}} \ \langle \nabla J_S(f) , \1_B  \rangle  + R_{S^c}'(f; \1_B).
$$

In the following we will divide the analysis of different choices for $p,q \geq 1$ into three different cases. \fg{First, we will discuss the the well-known anisotropic total variation regularizer for $p=q=1$, then the non-differentiable isotropic case for $q>p=1$ and finally the trivial differentiable case for $p,q>1$.}

Note that in our proposed approach for point cloud sparsification these parameter settings can be used to control the appearance of the resulting point clouds via the choice of a suitable regularizer in \eqref{eq:reducedproblem}. This is demonstrated in Section \ref{s:numerics}.

\paragraph{\bfseries{Case 1: $q=p=1$}}\mbox{}\par
\fg{
In this case the regularizer corresponds to the \emph{weighted anisotropic Total Variation} given as

\begin{align*}
R(f) &= \frac{1}{1}\Vert \nabla_w f\Vert_{1,1}^1 \ = \sum_{(u,v)\in E} \sqrt{w(u,v)}\sum_{j=1}^d  \vert f(v)_j - f(u)_j\vert.
\end{align*}
The directional derivative \dt{is} computed in Appendix \ref{app:pqlap} by the derivative of $R_S$ in \eqref{eq:derivativepq} and the directional derivative of $R_{S^c}$ in \eqref{eq:derive_directional_derivative}. 
Since, this regularizer decouples over the dimensions we do not have to choose a $d$-dimensional direction, but \dt{each dimension can be treated separately as a scalar vertex function}. %
\dt{We can} set $(\gamma_B^A)_j = \gamma_B > 0$ and  $(\gamma_{B^c}^A)_j = \gamma_{B^c} > 0$ for every $1\leq j \leq d$. If we select $p=q=1$ as a special case the regularizer \dt{corresponds to} the \emph{anisotropic total variation} regularizer
$$ R(f) = \Vert \nabla_w f\Vert_{1,1} =  \sum_{(u,v) \in E } \sqrt{w(u,v)} \sum_{j=1}^d \vert f(v)_j - f(u)_j\vert.  $$
For this the minimization problem \eqref{eq:prop} becomes 
\begin{align}
    \argmin_{B\in \mathcal{P}(V)} \ (\gamma_B + \gamma_{B^c})\langle \nabla D(f,g) + \alpha \nabla R_S(f), 1_B\rangle + (\gamma_B +\gamma_{B^c})\alpha w(B,B^c)
\end{align}
where $(\gamma_B + \gamma_{B^c})$ can actually be dropped. Thus, we can \dt{set} $\gamma_B+\gamma_{B^c} = 1$ and get
\begin{align}
    \argmin_{B\in \mathcal{P}(V)} \ \langle \nabla D(f,g) + \alpha \nabla R_S(f), 1_B\rangle + \alpha w(B,B^c)
\end{align}
which is \dt{the same problem that has been extensively discussed in the original Cut Pursuit analysis in} \cite{cutloic}.
}
\fg{Following \cite{cutloic} let us introduce the following two sets based on the directional derivatives
$$ 
\nabla^+ = \Big\{(u,j)\in V\times \lbrace1,\ldots,d\rbrace\  \big| \ \nabla J_S(f)_{(u,j)} \geq 0\Big\},
$$
$$
\nabla^- = \big( V \times \{1, \ldots, d\}\big)\setminus\nabla^+.
$$}
Note, that each tuple $(u,j)\in V\times \lbrace1,\ldots,d\rbrace$ can be described by a single vertex $u_j \in V_{flow}$.

We call this case the \textbf{non-differentiable, anisotropic case} for which the capacity function $c \in \mathcal{H}(E)$ is set as follows
\begin{align}\label{eq:flowgraph1} 
\begin{cases}
c(u_j,t) = \vert\nabla J_S(f)_{(u,j)}\vert, & (u,j)\in\nabla^-\\
c(s,u_j) = \ \ \nabla J_S(f)_{(u,j)}, & (u,j)\in \nabla^+\\
c(u_j,v_j) = \alpha \sqrt{w(u,v)}, & f(u)_j = f(v)_j, v\sim u,
\end{cases}
\tag{F1}
\end{align} 
and the corresponding flow graph can be constructed as described above.
Note that in this case a cut of this graph is the same as cutting $d$ independent flow graphs for which each one is related to a one-dimensional vertex function given by the coordinates of the original data. This comes from the fact that the capacities of \eqref{eq:flowgraph1} only connect vertices in the same respective dimension and there is no coupling between different dimensions.

\paragraph{\bfseries{Case 2: $q>p=1$}}\mbox{}\par
In the following we will discuss the most interesting setting, i.e., the non-differentiable, \emph{isotropic} case. We are mainly interested in solving a minimum graph cut problem with an isotropic TV regularization, \dt{which is much more challenging than the above discussed anisotropic case, since here the dimensions are coupled}. In this case \dt{the regularization functional is given as}
\begin{align*}
    R(f) = \sum_{(u,v)\in E} \sqrt{w(u,v)} \Vert \partial_v f(u)\Vert_q.
\end{align*}
For the sake of clarity we only discuss the special case of $p=1$ and $q=2$, which is in fact \dt{total variation variation with isotropy over the dimensions}. Note, that the argumentation in this paragraph holds also for the general case $q > p = 1$. 
\fg{
    The \dt{regularization functional in this case} is given as 
\begin{align*}
    R(f) & = \sum_{(u,v)\in E} \sqrt{w(u,v)} \Vert \partial_v f(u)\Vert_2\\ 
    &= \sum_{(u,v)\in E} \sqrt{w(u,v)} \sqrt{\sum_{j=1}^d (f(v)_j - f(u)_j)^2}.
\end{align*}
}
The directional derivative can be computed with \eqref{eq:derivativeq2} and \eqref{eq:dirregp}. \fg{Since we are in the isotropic case we have to choose a \dt{normalized} direction $\vec{\gamma}$, and thus $\gamma_B^A, \gamma_{B^c}^A\in \mathbb{R}^d$ for each set $A\in \Pi$ as described in Appendix \ref{app:deriviationCutPursuit}. It is crucial to \dt{perform} this for every \dt{subset independently}. For simplicity and also since we want to split every partition into two parts, we only have to \dt{determine} one direction $\gamma_A = \gamma_B = \gamma_{B^c}$. \dt{In Section \ref{sss:choosing_directions} we motivate a heuristical approach to choose a reasonable direction $\gamma_A \in \mathbb{R}^d$ for each subset $A \in \Pi$}. Plugging this into \eqref{eq:prop} \dt{we} get 
\begin{align}
    \argmin_{B\in\mathcal{P}(V)} \ \langle \nabla D(f,g) + \alpha \nabla R_S(f), \sum_{A\in\Pi} 1_{B\cap A}\gamma_A \rangle + \alpha w(B,B^c)
\end{align}
}

To compute the corresponding flow graph one has to set\fg{
\begin{align*}
\nabla^+ \ &= \ \Big\{u\in V \big| \ \langle \nabla J_S(f)_u, \gamma_A \rangle \geq 0, u \in A\Big\},\\
\nabla^- \ &= \ V\setminus\nabla^+.
\end{align*}}
We call this case the \textbf{non-differentiable, isotropic case} for which the capacity function $c \in \mathcal{H}(E)$ is set as follows
\begin{align}\label{eq:flowgraph3} 
\begin{cases}
c(u,t) \ = \  - \langle\nabla J_S(f)_u, \gamma_A \rangle, & u\in\nabla^- \wedge u\in A\\
c(s,u) \ = \ \phantom{-} \langle\nabla J_S(f)_u, \gamma_A \rangle, & u\in \nabla^+ \wedge u \in A\\
c(u,v) \ = \ \phantom{-} \alpha \sqrt{w(u,v)}, & f(u) = f(v), v\sim u.
\end{cases}
\tag{2}
\end{align} 

\paragraph{\bfseries{Case 3: \fg{$q, p > 1$}}}\mbox{}\par
{
In this \dt{easy case the regularization functional} becomes 
 \begin{align*}
    R(f) &= \frac{1}{p}\Vert \nabla_w f\Vert_{q,p}^p \ = \ \frac{1}{p}\sum_{(u,v)\in E} \left( \sum_{j=1}^d w(u,v)^\frac{q}{2} \vert f(v)_j - f(u)_j\vert^q\right)^\frac{p}{q}\\ 
    &= \frac{1}{p}\sum_{(u,v)\in E} w(u,v)^\frac{p}{2}\left( \sum_{j=1}^d  \vert f(v)_j - f(u)_j\vert^q\right)^\frac{p}{q} \\
    &= \frac{1}{p}\sum_{(u,v)\in E} w(u,v)^\frac{p}{2} \Vert \partial_v f(u)\Vert_q^p,
 \end{align*}
\fg{which is differentiable and an \emph{isotropic} regularizer for $d>1$ and $p\neq q$ since then the \dt{dimensions are} coupled. For $p=q$ this again becomes an \emph{anisotropic} regularizer.} Consequently, we can compute the directional derivative as  
\begin{align}
    J'(f;\vec{\gamma}) = \langle \nabla D(f,g) + \alpha\nabla R(f), \vec{\gamma}\rangle.
\end{align}
and again \dt{choose the normalized} direction $\gamma_A = \gamma_B^A = \gamma_{B^c}^A$ \dt{analogously to} Case 2. Plugging this into the minimization problem \eqref{eq:prop} we get 
\begin{align}
    \argmin_{B\in \mathcal{P}(V)} \ \langle \nabla J(f,g), \sum_{A\in \Pi} 1_{A\cap B} \gamma_A \rangle.
\end{align}
}

This is the trivial case where the functional $J$ is \emph{differentiable} everywhere, and thus $S^c = \emptyset$ and $R'(f;\1_B) = 0$. 
To compute the corresponding flow graph one has to set\fg{
\begin{align*}
\nabla^+ \ &= \ \Big\{u\in V \big| \ \langle \nabla J(f)_u, \gamma_A \rangle \geq 0, u \in A\Big\},\\
\nabla^- \ &= \ V\setminus\nabla^+.
\end{align*}
}

We call this case the \textbf{differentiable case} for which the capacity function $c \in \mathcal{H}(E)$ is set as follows
\begin{align}\label{eq:flowgraph2} 
	\begin{cases}
	c(u,t) = -\langle \nabla J(f)_u, \gamma_A \rangle, & u\in\nabla^-\\
	c(s,u) = \phantom{-}\langle \nabla J(f)_u, \gamma_A \rangle, & (u)\in \nabla^+\\
	c(u,v) = 0, & \forall u\in V
	\end{cases}\tag{F3}
\end{align} 
Note that the corresponding flow graph connects every vertex to either the sink $s$ or the source $t$, depending on the sign of the directional derivative, but there are no edges between the vertices themselves. Thus, the minimum cut is just a trivial cut $(S,T)$ with $S = \nabla^+,  T = \nabla^-$, i.e., a simple thresholding at zero. This allows to compute a minimum cut by just looking at the directional derivatives without constructing the flow graph $G_{flow}$ itself.

\dt{
\subsubsection{Choosing directions $\gamma_A$ for each subset $A \subset V$}
\label{sss:choosing_directions}
The only question that remains for discussion is how to choose a proper direction $\gamma_A$ for each subset $A \subset \Pi$. If we assume that the subset $A \subset \Pi$ can be well separated into two different parts, then intuitively it makes sense to determine a graph cut that removes edges between these two sets.
Ideally, this graph cut realizes a separation of the data points via a $(d-1)$-dimensional hyperplane $\Gamma$, i.e., a linear classifier in machine learning.
Assuming the hyperplane $\Gamma$ separates the two different parts of the subset $A$ well, then the normal vector of this hyperplane is a reasonable direction $\gamma_A \in \mathbb{R}^d$ for computing the capacities in \eqref{eq:flowgraph3}. This can be explained as follows: if one sets the regularization parameter $\alpha=0$ in \eqref{eq:flowgraph3} then the subset $A \subset V$ can be easily separated into two parts by determining the sign of the dot product of each data point with the normal vector $\gamma_A \in \mathbb{R}^d$ of the hyperplane $\Gamma$. Note that it is irrelevant if one uses $\gamma_A$ or $-\gamma_A$ as direction as it will only switch the sign of the dot product. In Figure \ref{fig:hyperplane} we illustrate this conceptual idea in the case of a two-dimensional point cloud.
}
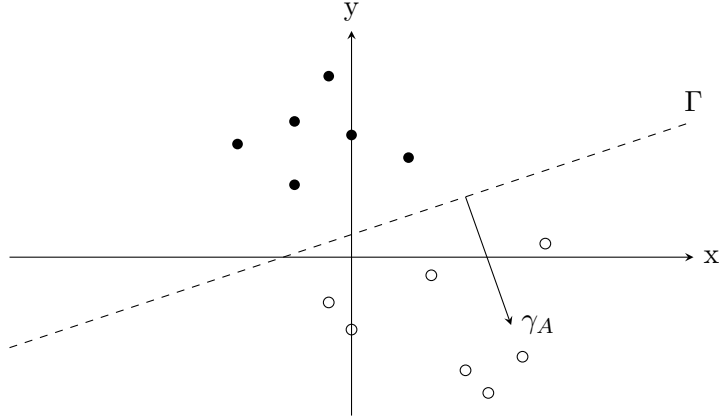
\begin{figure}[htb]
\centering
\begin{tikzpicture}[x=1.5cm,y=0.6cm]
  \draw[-stealth] (-3,0)--(3,0) node[right]{x}; %
  \draw[-stealth] (0,-3.5)--(0,5) node[above]{y}; %
  \draw[dashed] (-3,-2)--(3,3) node[above]{$\Gamma$}; %
  \draw[-stealth] (1,8/6)--(1.4,-9/6) node[right]{$\gamma_A$};

  \fill (0.5,2.2) circle[radius=2pt];
  \fill (-0.2,4)  circle[radius=2pt];
  \fill (0,2.7)  circle[radius=2pt];
  \fill (-1,2.5)  circle[radius=2pt];
  \fill (-0.5,3)  circle[radius=2pt];
  \fill (-0.5,1.6)  circle[radius=2pt];

  \draw (1.5,-2.2) circle[radius=2pt];
  \draw (-0.2,-1)  circle[radius=2pt];
  \draw (1.7,0.3)  circle[radius=2pt];
  \draw (1,-2.5)  circle[radius=2pt];
  \draw (1.2,-3)  circle[radius=2pt];
  \draw (0,-1.6)  circle[radius=2pt];
  \draw (0.7,-0.4)  circle[radius=2pt];
\end{tikzpicture}
\caption{Illustration of choosing a direction $\gamma_A$ as normal vector of a $(d-1)$-dimensional hyperplane $\Gamma$ that partitions the subset $A \subset V$ linearly for the case of a two-dimensional point cloud.}
\label{fig:hyperplane}
\end{figure}

\dt{
To compute a reasonably separating hyperplane $\Gamma$ one has two options. First, one can perform a principal component analysis (PCA) for the vertex function $f \in \mathcal{H}(V)$ restricted to the vertices in the subset $A \subset V$. The optimal hyperplane $\Gamma$ for separating the data is then given by the corresponding eigenvectors of the $d-1$ smallest eigenvalues of the covariance matrix. Consequently, the corresponding eigenvector of the single largest eigenvalue of the covariance matrix is the optimal direction $\gamma_A \in \mathbb{R}^d$. This makes sense as this eigenvector points in the direction of highest variance in the data and thus the orthogonal hyperplane $\Gamma$ spanned by the remaining eigenvectors separates the data according to this feature.
In the case of 3D point cloud sparsification that means one can compute a PCA for each subset $A \subset V$ and use the corresponding eigenvector of the largest eigenvalue of the covariance matrix as optimal direction $\gamma_A$.

Alternatively,  one can follow a standard approach from unsupervised machine learning, i.e., perform a $2$-means clustering on the subset $A \subset V$, which yields two good candidates $m_1, m_2 \in A \subset \mathbb{R}^d$ for cluster centers. Based on these one determines the optimal direction $\gamma_A \in \mathbb{R}^d$ as a normalized vector pointing from cluster center to the other, i.e.,
\begin{equation}
\label{eq:optimal_cut_direction}
\gamma_A = \frac{m_2-m_1}{\Vert m_2- m_1\Vert_2}.
\end{equation}

The heuristic approach presented above} allows us to reduce the multi-dimensional graph cut problem in the \dt{non-differentiable} isotropic case $q>p=1$ to a one-dimensional graph cut problem. During our numerical experiments we observed that \dt{this proposed method} leads to significantly better approximations than choosing random directions $\gamma_A$ for each subset $A \subset \Pi$.

To conclude our discussion we want to point out that following \cite{kolmogorov} the minimization of (\ref{eq:partitionproblem}) for some choices of $p$ and $q$ is the same as computing the minimum graph cut of the given flow graphs (\ref{eq:flowgraph1}) or (\ref{eq:flowgraph2}). Hence, one can solve the partition problem via standard maximum flow methods as described in \cite{maxflowmincut}. 

\subsection{Primal-dual optimization for the reduced problem}\label{ssec:red_problem}
For solving the reduced minimization problem \eqref{eq:reducedproblem} we will derive a primal-dual optimization algorithm as has been proposed by \cite{primaldual}. Let us consider a general minimization problem with proper, l.s.c., and convex functions $F$ and $G$, and a linear operator $K$ as follows
\begin{align}\label{eq:generalprimaldualproblem}
\min_{u\in X} G(u) + F(K u).
\end{align}
Following the argumentation in \cite{primaldual} one can derive the equivalent saddle-point formulation
\begin{align}
\min_{u\in X} \max_{y\in X^*} G(u) + \langle y, Ku \rangle -  F^*(y).
\end{align}
This can be solved by an iterative scheme that performs the following update
\begin{align*}
y^{k+1} &= \text{prox}_{\sigma F^*}\big(y^k + \sigma K \bar{u}^{k}\big)\\
u^{k+1} &= \text{prox}_{\tau G}\big(u^k - \tau K^* y^{k+1}\big)\\
\bar{u}^{k+1} &= u^{k+1} + \theta\big(u^{k+1}-u^k\big)
\end{align*}
with $\tau,\sigma >0$ and $\theta \in [0,1]$.

We are interested in solving the reduced minimization problem \eqref{eq:reducedproblem} in the general case with $q\geq p\geq 1$ in order to control the properties of the resulting sparse point clouds. In Section \ref{s:numerics} we will demonstrate the effect of different regularization functionals by various settings of $p$ and $q$. Note, that in the case $Q$ is differentiable, i.e., for $q\geq p>1$, there exist methods that are more suitable for the optimization of \eqref{eq:reducedproblem}, e.g., gradient descent methods as summarized in \cite[Section 4]{chambolle_introduction}. For the sake of simplicity, we will also cover this case in our general discussion below.

We are interested in deducing the necessary updates to compute a solution of the following variational problem
\begin{align}\label{eq:minproblem}
\min_{f\in \mathcal{H}(V)} \frac{1}{2}\Vert f - g\Vert_2^2 + \beta\Vert \nabla_w f\Vert_{{p;q}}.
\end{align}
Note that this is not exactly the same regularizer as given in \eqref{eq:problem}, except for the case $p=1$. However, since $Q$ is monotonic for all $p,q \geq 1$ a solution to \eqref{eq:minproblem} yields the same minimizer for appropriate rescaling of $\alpha$, cf. \cite{bungert2018} for details.  
To transfer the variational problem into the notation of \eqref{eq:generalprimaldualproblem} we set $K = \nabla_w$ and
\begin{align*}
F(\nabla_w f) \ = \ \beta \Vert \nabla_w f\Vert_{{p;q}} \ = \ \beta \left[\sum_{(u,v)\in E} \Big(\sum_{j=1}^d \vert \partial_v f(u)_j \vert^p\Big)^\frac{p}{q}\right]^\frac{1}{p}.
\end{align*}
Now we have to compute the convex conjugate $F^* = (\beta\Vert \cdot \Vert_{{p;q}})^*$. As shown in \cite{Sra_2012} the dual norm of the norm $\Vert \cdot \Vert_{{p;q}}$ is given by $\Vert \cdot \Vert_{{p^*;q^*}}$ with  $\frac{1}{p} + \frac{1}{p^*} = 1$ for $\frac{1}{q} + \frac{1}{q^*} = 1$, and $p,q\geq 1$. 
Hence, it follows that
\begin{align*}
\big( {\beta} \Vert \cdot \Vert_{p;q}\big)^*(y) \ = \ \delta_{B_{p^*;q^*}({\beta})} \ = \ \begin{cases}
0, & \Vert y\Vert_{{p^*;q^*}} \leq {\beta}\\
\infty, & \text{else}
\end{cases}
\end{align*}
with $y\in  \mathcal{H}(E)$.

We recall that the proximity operator of the characteristic function $\delta_C$ over a set $C\subset X$ is a projection, which is given as
\begin{equation}
\begin{split}
\prox_{\tau\delta_C}(z) \ &= \ \argmin_{x\in X} \left\{ \frac{1}{2\tau}\Vert x-z \Vert_2^2 + \delta_C (x) \right\} \\
\ &= \ \argmin_{x\in C} \left\{ \frac{1}{2\tau}\Vert x-z \Vert_2^2\right\} \ \eqqcolon \ \proj_C(z).
\end{split}
\end{equation}
Consequently, the proximity operator for $C = {B_{p^*;q^*}(\beta)}$ is the projection of an element $z\in \mathcal{H}(E)$ onto the ${p^*q^*}$-ball of radius $\beta$.
Thus, we get 
\begin{align}
\text{prox}_{\tau F^*}(z) 
\ &= \ \text{proj}_{B_{p^*;q^*}({\beta})}(z) 
\end{align}
See Appendix \ref{app:pqball} for a distinction of the projection for different choices of $p$ and $q$.

To conclude the derivation we have to compute the proximity operator for the update of the primal variable, which is given by
\begin{align*}
\text{prox}_{\tau F}(z) &= \arg\min_x\left\{ \frac{1}{2\tau} \Vert x-z \Vert_2^2 + F(x) \right\}\\
&=  \arg\min_x\left\{ \frac{1}{2\tau} \Vert x-z \Vert_2^2 + \frac{1}{2}\Vert x-g \Vert_2^2 \right\}.
\end{align*}
By computing the necessary optimality condition for $x$ it follows that
\begin{align*}
x \ &= \ \frac{z+\tau g}{1+\tau }.
\end{align*}
Plugging this into the proximity operator we get the following primal-dual algorithm for solving (\ref{eq:problem}) as a result

\begin{align}\label{alg:finalprimaldual}
\begin{split}
f^{k+1} &= \frac{f^k - \tau \nabla_w^* y^k+\tau g}{1+\tau}\\
y^{k+1} &= \proj_{B_{q^*;p^*}(\beta)}\big(y^k + \sigma \nabla_w f^{k+1}\big).
\end{split}\tag{PD}
\end{align}

Above we have derived an iterative algorithm to solve the minimization problem in (\ref{eq:minproblem}). We want to use this method to solve the reduced problem in (\ref{eq:reducedproblem}) with $R = \Vert \nabla_w\cdot\Vert_{{p;q}}$. 
Therefore, let $\Pi = \big\{A_1, \ldots, A_m \big\}\subset V$ be a fixed partition of the vertex set $V$ and $\mathcal{H}(\Pi)$ be the Hilbert space induced by this partition as defined above. 

\fg{The corresponding reduced graph $G_r = (V_r, E_r, w_r)$ is defined as in $V_r = \Pi$, \eqref{eq:reducedEdgeset} and \eqref{eq:reducedWeights}}.
\fg{Recall that any piecewise constant function $f_c\in \Hil(V)$ can be represented by a function $c = \left(c_A\right)_{A\in \Pi} \in \mathcal{H}(Vr)$ as $$f_c = \Big(\sum_{A\in\Pi} {c_A}_j 1_A\Big)_{j=1}^d$$ on the reduced graph. }

\fg{To simplify the notation in the computations later on} we introduce a \dt{matrix} operator $P := \big(\1_{A_1} \ \ldots \ \1_{A_m}\big) \subset \{0,1\}^{N\times m}$ with the following properties that are easy to show.
\begin{lem}\textbf{(Properties of the expansion operator $P$)}\\
\label{lemma:expandoperator}
	Let $\Pi = \lbrace A_i \mid i=1,\ldots,m \rbrace$ be a partition of $V$ as defined above and let the operator $P := \big(\1_{A_1} \ldots \1_{A_m}\big) \in \{0,1\}^{N\times m}$. Then the following properties hold:
	\begin{align}
	&Pc \in \mathcal{H}(V), \tag{i}\\[2mm]
	&P^*P = \mathrm{diag}\big(\vert A_1\vert, \ldots, \vert A_m \big\vert), \tag{ii}\\[2mm]
	&P^*\nu = \big(\nu_A\big)_{A\in \Pi} \text{ for any } \nu\in \mathbb{R}^{N\times d} \text{ with }\nu_A = \sum_{u\in A} \nu(u)\in \mathbb{R}^d. \tag{iii}
	\end{align}
\end{lem}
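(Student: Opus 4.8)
The plan is to verify all three identities directly from the column structure of $P$, whose $i$-th column is the indicator vector $\1_{A_i}\in\{0,1\}^N$, so that the $(u,i)$ entry of $P$ equals $\1_{A_i}(u)$. The single structural fact that drives every step is that $\Pi=\{A_1,\dots,A_m\}$ is a partition of $V$: the blocks are pairwise disjoint and cover $V$, hence each vertex $u\in V$ lies in exactly one block, which I will denote $A_{i(u)}$. For property (i), I would regard $P$ as acting on each of the $d$ coordinate columns of $c=(c_A)_{A\in\Pi}\in\mathbb{R}^{m\times d}\simeq\Hil(\Pi)$ separately, and write $Pc=\sum_{i=1}^m \1_{A_i}c_{A_i}$. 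Evaluating at a vertex $u$ gives $(Pc)(u)=c_{A_{i(u)}}\in\mathbb{R}^d$, since exactly one indicator is nonzero there. Thus $Pc$ is a well-defined vector-valued vertex function, i.e.\ an element of $\mathcal{H}(V)\simeq\mathbb{R}^{N\times d}$; it is precisely the piecewise constant function associated with $c$, which is the claim.

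For property (ii), the $(i,j)$ entry of $P^*P$ is the Euclidean inner product of the $i$-th and $j$-th columns, namely $\langle \1_{A_i},\1_{A_j}\rangle=\sum_{u\in V}\1_{A_i}(u)\1_{A_j}(u)$. Disjointness of the blocks forces this sum to vanish for $i\neq j$, while for $i=j$ it collapses to $\sum_{u\in V}\1_{A_i}(u)=\lvert A_i\rvert$, yielding $P^*P=\mathrm{diag}(\lvert A_1\rvert,\dots,\lvert A_m\rvert)$. For property (iii), since $P$ is a real matrix we have $P^*=P^\top$, so for $\nu\in\mathbb{R}^{N\times d}$ the $i$-th block of $P^*\nu$ is $\sum_{u\in V}P_{u,i}\,\nu(u)=\sum_{u\in A_i}\nu(u)=\nu_{A_i}\in\mathbb{R}^d$, which is exactly $(\nu_A)_{A\in\Pi}$.

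I do not expect a genuine obstacle here, as all three statements are routine verifications. The only points requiring care are to keep track of the vector-valued ($d$-dimensional) structure — each argument should be read as acting identically on each of the $d$ coordinate columns, so that the scalar matrix identities for $P$ and $P^*P$ transfer verbatim to the vector-valued setting — and to invoke at the right moment the disjoint-and-covering property of $\Pi$ that both makes the indicator columns pairwise orthogonal (for (ii)) and guarantees that each vertex is assigned a single value (for (i)).
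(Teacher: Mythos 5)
Your verification is correct and is exactly the routine column-by-column argument the paper has in mind; the paper itself omits the proof, stating only that the properties "are easy to show." Nothing further is needed.
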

We call $Pc$ an \textit{expansion} of $c$ from $\mathcal{H}(\Pi)$ to a piecewise constant function in $\mathcal{H}(V)$ and $P^*f$ a \textit{reduction} of $f\in \mathcal{H}(V)$ to the reduced space $\mathcal{H}(\Pi)$. 
Based on the expansion operator we can construct a piecewise function $f_c\in\mathcal{H}(V)$ such that
 \begin{align}\label{eq:liftedf}
 \fg{f_c = Pc}.
 \end{align}
  For functions of the form \eqref{eq:liftedf} we introduce the subspace $\mathcal{S}_\Pi \subset \mathcal{H}(V)$ of piece-constant functions induced by the partition $\Pi$ as
\begin{equation}\label{eq:span}
 \mathcal{S}_\Pi := \ \Big\{f_c\in \mathcal{H}(V) \ \Big|  \  f_c = Pc, \ P = \big(\1_{A_1} \ \ldots \ \1_{A_m}\big), \ c\in \mathcal{H}(\Pi) \Big\}.
\end{equation}

We aim to solve a reduced problem over the piecewise constant functions $f\in \mathcal{S}_\Pi$ given by 
\begin{align}\label{eq:notreduced}
	\operatornamewithlimits{arg\min}_{f\in \mathcal{S}_\Pi} \frac{1}{2} \Vert f - g \Vert_2^2 + \beta \Vert \nabla_w f \Vert_1.
\end{align}
With the properties of the operator $P := \big(\1_{A_1} \ldots \1_{A_m}\big) \in \{0,1\}^{N\times m}$ given in Lemma \ref{lemma:expandoperator} we rewrite the data term of \eqref{eq:notreduced} as
\begin{align*}
\Vert f-g\Vert_2^2 &= \Vert Pc - g\Vert_2^2.
\end{align*}
The following proposition yields that for $f = Pc$ we can deduce that $\Vert \nabla_w f\Vert_1 = \Vert \nabla_{w_r} c\Vert_1$.
\begin{prop}
	Let $G = (V,E,w)$ be a finite weighted graph and $G_r = (V_r,E_r,w_r)$ a reduced graph corresponding to the partition $\Pi$ of $V$. 
	Let $f\in \mathcal{S}_\Pi$ with $f = \sum_{A\in \Pi} c_A 1_A$ and $c = (c_A)_{A\in\Pi} \in \mathcal{H}(\Pi)$. Then the following equality holds 
	$$\Vert \nabla_w f\Vert_1 = \Vert \nabla_{w_r} c\Vert_1. $$
\end{prop}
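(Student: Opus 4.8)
The plan is to prove the identity by a direct regrouping of the edge sum that exploits the fact that $f$ is constant on each block of the partition $\Pi$. First I would expand the left-hand side from the definition of the gradient norm on the original graph, writing $\Vert \nabla_w f \Vert_1$ as a sum over edges $(u,v)\in E$ of the per-edge contribution involving $f(v)-f(u)$. Substituting $f = \sum_{A\in\Pi} c_A 1_A$, for a vertex $u\in A$ we have $f(u)=c_A$; hence on any edge whose endpoints lie in the \emph{same} block $A$ the difference $f(v)-f(u)=c_A-c_A=\vec{0}$ vanishes and contributes nothing. Thus only the \emph{cut edges}, i.e.\ those $(u,v)\in E$ with $u\in A$, $v\in B$ for distinct blocks $A\neq B$, survive.

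Next I would partition the surviving edges according to the ordered pair of blocks they join, using the observation $\{(u,v)\in E : u\in A,\, v\in B\} = (A\times B)\cap E$. On each such set the difference $f(v)-f(u)=c_B-c_A$ is constant, so I can factor it out of the inner sum, leaving only an aggregate of edge weights $\sum_{(u,v)\in(A\times B)\cap E} w(u,v)$, which by the definition of the reduced weight in \eqref{eq:reducedWeights} equals $w_r(A,B)$. The outer sum then ranges exactly over those block pairs $(A,B)$ with $(A\times B)\cap E\neq\emptyset$, which is precisely the reduced edge set $E_r$ from \eqref{eq:reducedEdgeset}. Reassembling the terms, the expression becomes the gradient norm $\Vert \nabla_{w_r} c \Vert_1$ of $c$ on the reduced graph $G_r$, which is the claimed equality.

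The step requiring the most care is the weight aggregation: I must verify that collapsing the contributions of all original edges running between two blocks into a single reduced edge reproduces exactly $w_r(A,B)$, so that the aggregation is consistent with the weighting convention used in the definition of $\Vert \cdot \Vert_1$ (in particular, that the weight enters the norm in the same way before and after reduction). Relatedly, because the graph is undirected each geometric edge appears as both $(u,v)$ and $(v,u)$, so I would fix one consistent summation convention and check that the identical double-counting occurs on $G$ and on $G_r$, so that any symmetry factor cancels between the two sides. Finally, I would confirm that the within-block edges genuinely drop out and that no cut edge is omitted, which guarantees that the regrouping is an exact partition of the contributing edges rather than merely a bound.
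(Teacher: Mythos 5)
Your proposal is correct and is essentially the paper's own argument run in the opposite direction: the paper expands $\Vert \nabla_{w_r} c\Vert_1$ over the sets $E_{AB}=(A\times B)\cap E$ and collapses back to the sum over $E$, while you expand $\Vert\nabla_w f\Vert_1$ and regroup by block pairs, using the same observations that within-block differences vanish and that the aggregated edge weights reproduce $w_r(A,B)$. The care points you flag (weight-aggregation convention and symmetric double counting) are exactly the places where the paper's chain of equalities is doing the same bookkeeping implicitly.
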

\begin{proof}
	Let $E_{AB} = (A\times B) \cap E$ be the set of edges between the partitions $A$ and $B$ and note that $E = \bigcup_{(A,B)\in E_r} E_{AB}$. Then we can deduce
	\begin{align*}
		\Vert \nabla_{w_r} c \Vert_1 & = \sum_{(A,B)\in E_r} w_r(A,B)\vert c_B - c_A\vert\\
		& = \sum_{(A,B)\in E_r} \sum_{ (u,v) \in E_{AB}} \sqrt{w(u,v)} \  \vert c_B - c_A\vert\\
		& = \sum_{(A,B)\in E_r} \sum_{ (u,v) \in E_{AB}} \sqrt{w(u,v)} \  \vert f(v) - f(u)\vert\\		
		& = \sum_{(u,v)\in E} \sqrt{w(u,v)} \  \vert f(v) - f(u)\vert\\
		& = \Vert \nabla_w f\Vert_1
	\end{align*}
	\qed
\end{proof}

Now we can rewrite problem \eqref{eq:notreduced} to a reduced form that only depends on $c\in \mathcal{H}(\Pi)$ and write it as the reduced problem
\begin{align}
\label{eq:red_minproblem}
	f_\Pi \ = \ \operatornamewithlimits{arg\min}_{c\in \mathcal{H}(\Pi)}\ \frac{1}{2} \Vert Pc - g \Vert_2^2 + \alpha \Vert \nabla_{w_r} c \Vert_1.
\end{align}

The only difference now between the original problem \eqref{eq:minproblem} and the reduced formulation \eqref{eq:red_minproblem} is the operator $P$. Since this $P$ has only influence on the primal variable update, we have to compute a different primal variable update with the same strategy as before by computing the proximal operator
\begin{align*}
\mathrm{prox}_{\tau F}(z) &= \arg\min_{c\in \mathcal{H}(\Pi)}\left\{ \frac{1}{2\tau} \Vert c-z \Vert_2^2 + F(c) \right\}\\
&=  \arg\min_{c\in \mathcal{H}(\Pi)}\left\{ \frac{1}{2\tau} \Vert c-z \Vert_2^2 + \frac{1}{2}\Vert Pc-g \Vert_2^2 \right\}.
\end{align*}
By computing the necessary optimality condition for $c$ it follows that
\begin{align*}
c = \left(I+\tau P^*P\right)^{-1} \left(z+\tau P^*g\right).
\end{align*}
With this we deduce the following primal variable update 
\begin{align}
	c^{k+1} = \Big(I+\tau P^*P \Big)^{-1}\Big(c^k - \nabla_{w_r}^*y^k + \tau P^*g \Big).
\end{align}

Using Lemma \ref{lemma:expandoperator} we can write $P^*g = (f_{0A})_{A\in \Pi}$ and $P^*P = \text{diag}\left(|A_1|, \ldots, |A_m| \right)$ and it follows that $$\left(I+\tau P^*P\right)^{-1} = \text{diag}\left( \frac{1}{1+\tau|A_1|}, \ldots, \frac{1}{1+\tau|A_m|}\right)$$ which implies the following update for each partition $A\in \Pi$
\begin{align}\label{eq::primalupdatereduced}
	c_A^{k+1} = \frac{1}{1+\tau|A|}\left( c_A^k + \big(\nabla_{w_r}^*y^k\big)_A + \tau f_{0A}  \right).
\end{align}

Interestingly, the matrix $\tau P^*P$ can be interpreted as a variant of diagonal preconditioning. However, the acceleration methods as described in \cite{primaldual} and a diagonal preconditioning as in \cite{diagonalPPD} can still be applied additionally. 

\subsection{Diagonal preconditioning}\label{ss:diagprecon}
In this section we want to \dt{investigate preconditioning} of the reduced problem and the corresponding operator $\nabla_{w_r}$. As we pointed out before any vertex function \fg{$f\in \Hil(V)$ can be described by a vector $$f = \left(f(u)\right)_{u\in V} \in \mathcal{R}^{N\times d}$$ with $N$ as the number of vertices. The weighted gradient can also be described by a vector given as $$\nabla_w f = \left(\sqrt{w(u,v)}(f(v)-f(u))\right)_{(u,v)\in E} \in \mathcal{R}^{M\times d}$$ with $M$ as the number of edges. Thus, we can give a differential operator matrix $\mathcal{D}\in \mathbb{R}^{M\times N}$ representing the graph operator $\nabla_w$ i.e. $\mathcal{D} f = \nabla_w f$. As $E$ is finite we can find a corresponding $e_i = (u_i, v_i)\in E$ for every $i\in \{1, \ldots, M\}$ and we can define $\mathcal{D}$ as follows
\begin{align}
	\mathcal{D}_{i,\tilde{u}} = \begin{cases}
	\phantom{-}\sqrt{w(u_i,v_i)} \  , & \tilde{u} = u_i\\
	-\sqrt{w(v_i,u_i)}, & \tilde{u} = v_i\\
	\phantom{-}0, & \text{else.}
	\end{cases}
\end{align} }
As we can see, the number of entries in a column for a given vertex $u\in V$ depends on the number of neighbors. Thus, for graphs with a rather inhomogeneous  structure, e.g. a symmetrized $k$-nearest neighbors on unstructured point clouds, the norm of the operator might not be a good choice for the step sizes $\tau$ and $\sigma$ in \eqref{alg:finalprimaldual} as it might be too conservative for most vertices. Applying preconditioning often is a good measure to enhance the convergence speed of the algorithm.
In order to apply the preconditioning scheme in \cite[Lemma 2]{diagonalPPD} we have to compute the row and column sums of the absolute values in $\mathcal{D}$. Assuming that $w(u,v) = w(v,u)$ for all $(u,v)\in E$ the component-wise preconditioners for $\mathcal{D}$ are then given by
\begin{align}
	\tau_u &= \frac{1}{\sum_{i=1}^M \vert\mathcal{D}_{i,u}\vert^{2-\alpha}} = \frac{1}{\sum_{v\sim u} w(u,v)^\frac{2-\alpha}{2}} , \quad \forall u\in V\\
	\sigma_i &= \frac{1}{\sum_{u\in V}\vert\mathcal{D}_{i,u}\vert^\alpha} = \frac{1}{2 w(u,v)^\frac{\alpha}{2}}, \quad \forall i\in\{1,\ldots, M\}
\end{align}
for any $\alpha \in [0,2]$.
This leads to the diagonal preconditioners 
\begin{align}
	T &= \mathrm{diag}\left( \tau_1, \ldots, \tau_N \right)\\	
	\Sigma &= \mathrm{diag}\left( \sigma_1, \ldots, \sigma_M \right).
\end{align}
As we can see, the preconditioning for the primal update $T$ takes the number of edges and their weights directly into account, and thus well improves the condition of this problem. 

In the reduced problem we get an even worse condition, since the size of the partitions, the summed up weights of the combined edges and the number of neighboring partitions might differ heavily. As we have seen in \eqref{eq::primalupdatereduced} the size of the partitions is already handled in the primal update. We propose to apply a diagonal preconditioning to the reduced primal-dual approach but now on the reduced differential operator matrix $\mathcal{D}_r$ which is defined for $G_r$ as $\mathcal{D}$ is defined on $G$. 
This can be applied as described before and for the reduced primal update we thus get a diagonal preconditioning as 
$$ \tau_A = \frac{\vert A\vert}{\sum_{(A,B)\in E_r} w_r(A,B)^{2-\alpha}}.$$
With the preconditioning schemes proposed above we are able to alleviate the problem of a bad condition of $\mathcal{D}_r$ and achieve a significant convergence acceleration as we will demonstrate in Section \ref{s:numerics}.

\subsection{Weighted $l_0$ regularization}
\label{ss:l0}
Finally, we want to give a special highlight on a regularization functional that is related but yet not \fg{covered} by the formulation \eqref{eq:problem}. In particular we want to discuss a Cut Pursuit strategy for energy functionals of the form 
\begin{align}
\label{eq:weighted_l0}
 J_0(f;g) \ = \ D(f,g) + \alpha \!\!\!\! \sum_{(u,v)\in E} \sqrt{w(u,v)} \  \1_{S_0}
\end{align}
with $S_0 = S_0(f) = \{(u,v)\in E \ | \ f(u) \neq f(v)\}$. The {proposed regularization term in \eqref{eq:weighted_l0}} can be interpreted as weighted $\ell_0$ regularization \dt{for which we analyse its properties in the following.}

Let $\Pi$ be some partition of the vertex set $V$ and let $f_\Pi = Pc\in \mathcal{S}_\Pi$. Also let $G_r = (V_r, E_r, w_r)$ be the reduced graph corresponding to $\Pi$ as defined before. Notice that the functional $J_0$ in \eqref{eq:weighted_l0} is differentiable for every $(u,v) \in S_0(f_\Pi)$. 
\fg{The formulation of the partition problem in this case is given by 
\begin{align}
    \argmin_{B\in \mathcal{P}(V)} \ \langle \nabla D(f_\Pi,g), \vec{\gamma}\rangle + w(B,B^c)
\end{align}
as we have derived in Appendix \ref{app:derivationMinimalPartition} and written in \eqref{eq:appendixMinPartition}.}
\par To deduce the reduced problem we first emphasize that 
\begin{equation*}
\sum_{(u,v)\in E} \sqrt{w(u,v)} \  \1_{S_0(f_\Pi)} \ = \sum_{(A,B)\in E_r} w_r(A,B)
\end{equation*}
which is not depending on $f_\Pi$. Thus, it is a constant and can be dropped for minimization which leads to
\begin{align}
	\argmin_{c\in\mathcal{H}(\Pi)} \ \frac{1}{2} \Vert Pc - g\Vert_2^2 \ = \ \argmin_{c\in\mathcal{H}(\Pi)} \ \frac{1}{2} \Vert Pc - g\Vert_2^2 + \alpha \!\!\!\!\!\! \sum_{(A,B)\in E_r} w_r(A,B).
\end{align}
We can formulate the necessary optimality condition as 
\begin{equation*}
P^*Pc - P^*g \ = \ 0,
\end{equation*} 
which leads with Lemma \ref{lemma:expandoperator} to the component-wise solution 
\begin{equation}\label{eq:solutionL0}
c_A = \frac{\fg{\sum_{u\in A} g(u)}}{\vert A\vert}, \ \forall A\in \Pi,
\end{equation}
i.e., the optimal piecewise constant approximations are the mean values of the respective subsets $A$ induced by the partition $\Pi$.

In conclusion we get the following Cut Pursuit algorithm for the case of a weighted $\ell_0$ regularization functional.
\begin{alg}[Cut Pursuit with $\ell_0$ regularization] 
\label{alg:proposed_schemel0}\small
\begin{gather*}
\Bigl\lbrace J'(f_\Pi; \vec{\gamma}) = \langle \nabla D(f_\Pi,g) , \vec{\gamma} \rangle + \alpha\!\!\! \sum_{(u,v)\in S_0^c} \sqrt{w(u,v)} \  \vert \vec{\gamma}(u) - \vec{\gamma}(v)\vert \Bigr\rbrace \rightarrow \min_{B \in \mathcal{P}(V)},\label{eq:partitionprobleml0}\\
\fg{c_{Aj} = \frac{\sum_{u\in A} f_{0j}(u)}{|A|}, \ \forall A\in \Pi, \ 1\leq j\leq d}.
\end{gather*}\normalsize
\end{alg}
\fg{This algorithm is different from the one given in \cite{cutloic} and much closer to the original Cut Pursuit approach. It also covers a different class of data terms, since in \cite{cutloic} the data term is supposed to be separable, but can be non-differentiable. Here, it has not to be separable but differentiable. This allows to use the \dt{Cut Pursuit scheme with $\ell_0$ regularization in a wider range} of applications.}

\fg{\begin{algorithm}[b]
	\DontPrintSemicolon
	\KwData{A $d$-dimensional point cloud $g\colon V\rightarrow \mathbb{R}^d$}
	\textbf{Method:}\\
		$G = (V,E,w) \leftarrow $ constructGraph($g$)\\
		$\Pi \leftarrow \lbrace V\rbrace$\\
		\While{$J'(f_\Pi;\1_B)<0$}{
		$G_{flow} \leftarrow$ buildFlowGraph($V$,$g$,$\alpha$) for given methods \eqref{eq:flowgraph2}, \eqref{eq:flowgraph1}, \eqref{eq:flowgraph3}\\
		$B \leftarrow $ computeMaxFlow($G_{flow}$) via maxflow algorithm (cf. \cite{maxflowmincut})\\
		$\Pi \leftarrow$ computePartition($V$,$B$,$\Pi$) as $\Pi_{new}$ in \eqref{eq:NewPartitionConnComp}\\
		$G_{r} = (V_r, E_r, w_r)\leftarrow$ computeReducedGraph($\Pi$) with $V_r = \Pi$, $E_r$ as in \eqref{eq:reducedEdgeset} and $w_r$ as in \eqref{eq:reducedWeights}\\
		$f_{\Pi} \leftarrow$ solveReducedProblem($g, \Pi, G_{r}$) with Primal Dual algorithm as in \cite{primaldual} with primal update \eqref{eq::primalupdatereduced} or for $\ell_0$ with update \eqref{eq:solutionL0}
		}				
		\KwResult{A sparse point cloud $f_{\Pi} \colon \Pi \rightarrow \mathbb{R}^d$}
	\caption{Cut Pursuit for 3D point cloud sparsification \label{alg:cp}}
\end{algorithm}}

\section{Numerical experiments}\label{s:numerics}
In this section we evaluate the performance and effectiveness of the proposed minimization schemes in Algorithms \ref{alg:proposed_scheme} and \ref{alg:proposed_schemel0} for the task of point cloud sparsification. The algorithms presented in Section \ref{s:cut_pursuit} were implemented in the programming language \textsc{MathWorks Matlab (R2018a)} without any additional external libraries. We did not use any built-in parallelization paradigms of Matlab except for vectorization. Thus, one can expect that the absolute time needed for computing a sparse point cloud can still be optimized by using techniques such as parallelization on modern general purpose GPUs or distributed computing. This is feasible in our situation since every subset $A_i \subset V$ of a partition $\Pi$ can be treated independently from the other subsets in the subsequent iterations of the proposed minimization scheme. 

The minimum graph cut was computed by the built-in Matlab function \texttt{maxflow} to which we pass the constructed flow graph as described in \eqref{eq:flowgraph1}. The primal-dual minimization algorithm was implemented in an over-relaxed version with step size updates as described in \cite{primaldual}. Since there is no universal stability condition for primal-dual optimization on finite weighted graphs, we estimate the spectral norm of the weighted gradient operator via a power iteration scheme \cite{powertits}. 

We performed our experiments directly on the raw point clouds without any preprocessing or triangulation of the surface. To build a finite weighted graph on the point cloud we connect each point to its $k$-nearest neighbors ($k=8$) \dt{in terms of the Euclidean distance} and symmetrize the edges to have an \emph{undirected} graph structure. As presented in Section \ref{s:graphs} we only consider undirected edges due to a simplification of the involved graph operators. However, we underline that the proposed minimization scheme is independent of the graph structure and thus can also be used for directed graphs. 
\dt{We define a vertex function $f \colon V \rightarrow \mathbb{R}^3$ as the three-dimensional coordinates of the given point cloud.} We set the weight function to be the inverse squared Euclidean distance between connected points $f(u)$ and $f(v)$ as proposed in \cite{elmoataz2008}, i.e.,
\begin{equation*}
w(u,v) \ = \ \frac{1}{\Vert f(u) - f(v) \Vert_2^2}.
\end{equation*}
\dt{This is meaningful since edges to neighboring 3D points which have a smaller Euclidean distance get a higher weight and thus have higher influence on a graph vertex.}

The overall structure of the implemented method for point cloud sparsification is summarized in Algorithm \ref{alg:cp} below. 
\begin{algorithm}[b]
	\DontPrintSemicolon
	\KwData{A $d$-dimensional point cloud $g\colon V\rightarrow \mathbb{R}^d$}
	\textbf{Method:}\\
		$G = (V,E,w) \leftarrow $ constructGraph($g$)\\
		$\Pi \leftarrow \lbrace V\rbrace$\\
		\While{$J'(f_\Pi;\1_B)<0$}{
		$G_{flow} \leftarrow$ buildFlowGraph($V$,$g$,$\alpha$)\\
		$B \leftarrow $ computeMaxFlow($G_{flow}$).\\
		$\Pi \leftarrow$ computePartition($V$,$B$,$\Pi$) \\
		$G_{r} \leftarrow$ computeReducedGraph($\Pi$)\\
		$f_{\Pi} \leftarrow$ solveReducedProblem($g, \Pi, G_{r}$)
		}				
		\KwResult{A sparse point cloud $f_{\Pi} \colon \Pi \rightarrow \mathbb{R}^d$}
	\caption{Cut Pursuit for 3D point cloud sparsification \label{alg:cp}}
\end{algorithm}
We have put an implementation of the proposed method as open source on Github. The interested reader can download the source code via the URL \url{To Be Inserted After Review}.

\subsection{Special case: Octree approximation}
\label{ss:results_octree}
In the following we discuss a special case of our proposed method that is currently used as a standard technique for 3D point cloud sparsification. If we set the regularization parameters $\alpha = \beta = 0$ in \eqref{eq:partitionproblem} and \eqref{eq:reducedproblem}, respectively, then we observe that Algorithm \ref{alg:cp} performs exactly an octree approximation of the original data.
The reason for this is the fact that the flow graph described in Section \ref{ssec:graph_cuts} has zero capacities between neighboring vertices since the regularization parameter is set to zero. Hence, the anisotropic graph cut assigns each coordinate according to its relative position to the current cluster center its vertex is associated to. As shown in \eqref{eq:flowgraph2} the octree approximation is performed by a simple thresholding operation based on the sign of the $L^2$ data fidelity term. Each iteration of Algorithm \ref{alg:cp} leads to a higher level-of-detail in the process of 3D point cloud sparsification. 
\begin{figure}[htb]
\begin{subfigure}[c]{0.47\textwidth}
	\includegraphics[height=4cm]{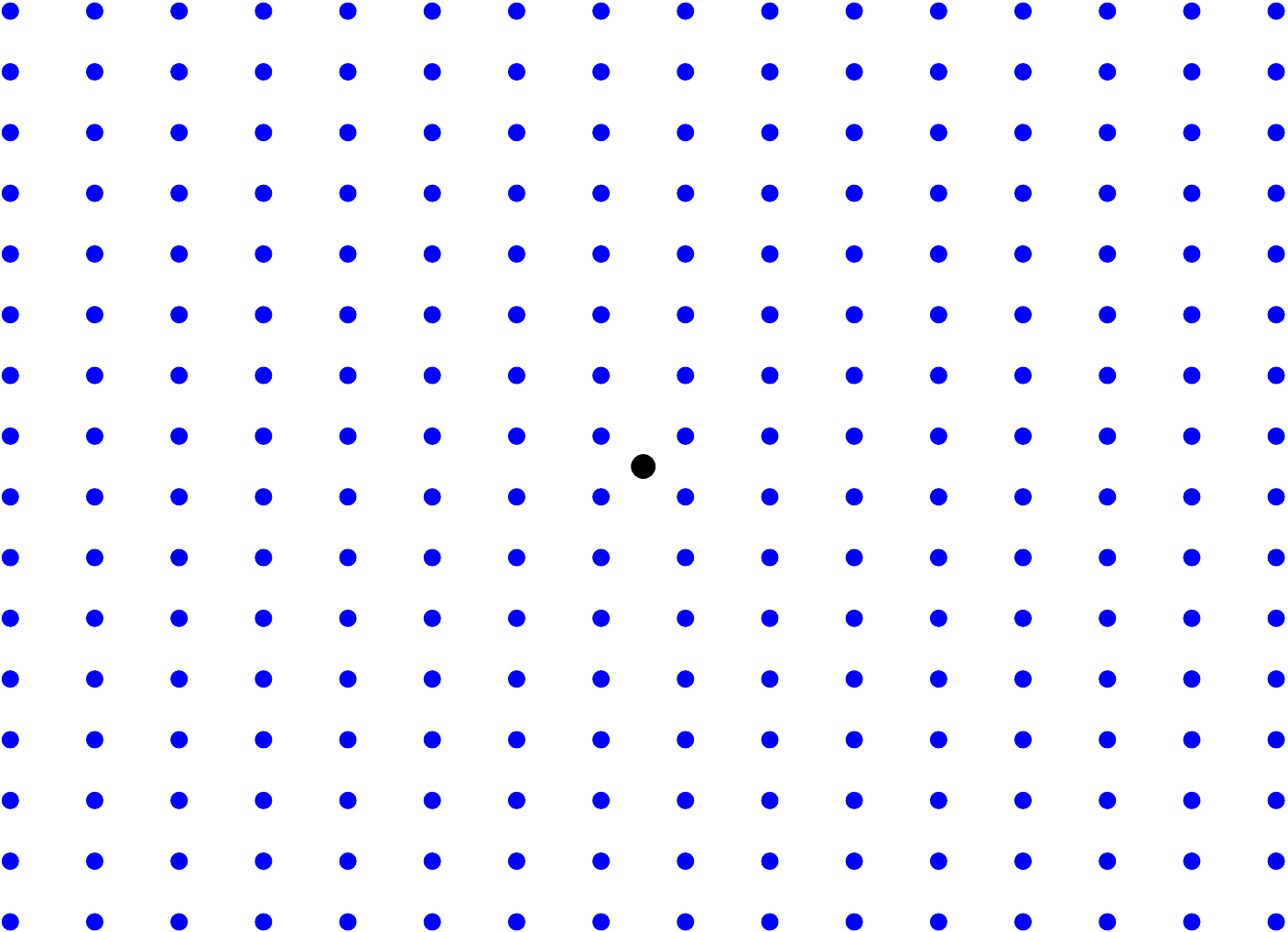}
	\subcaption{Partition after iteration $1$}
\end{subfigure}\hfill
\begin{subfigure}[c]{0.47\textwidth}
	\includegraphics[height=4cm]{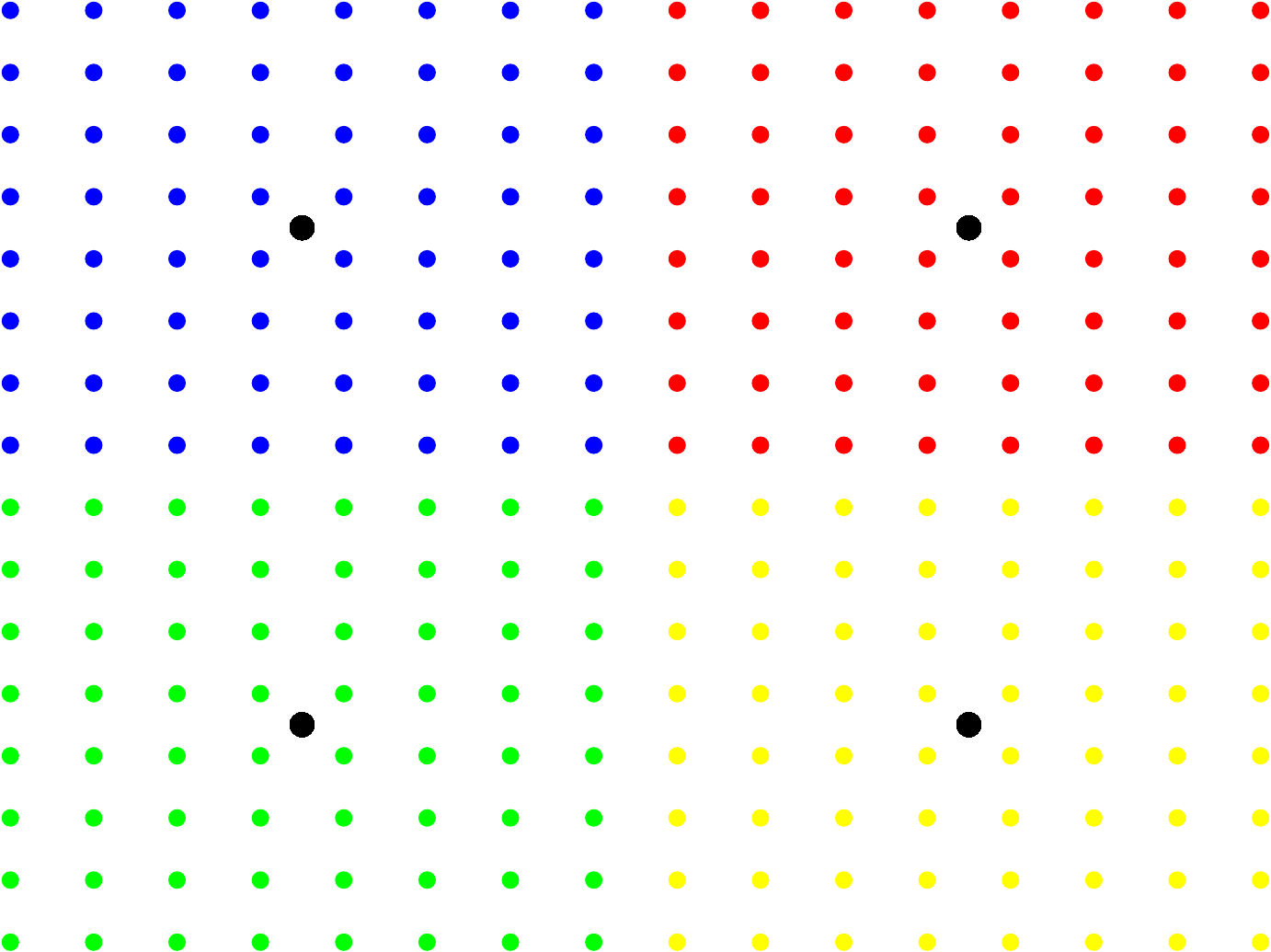}
	\subcaption{Partition after iteration $2$}
\end{subfigure}\\[0.4cm]
\begin{subfigure}[c]{0.47\textwidth}
	\includegraphics[height=4cm]{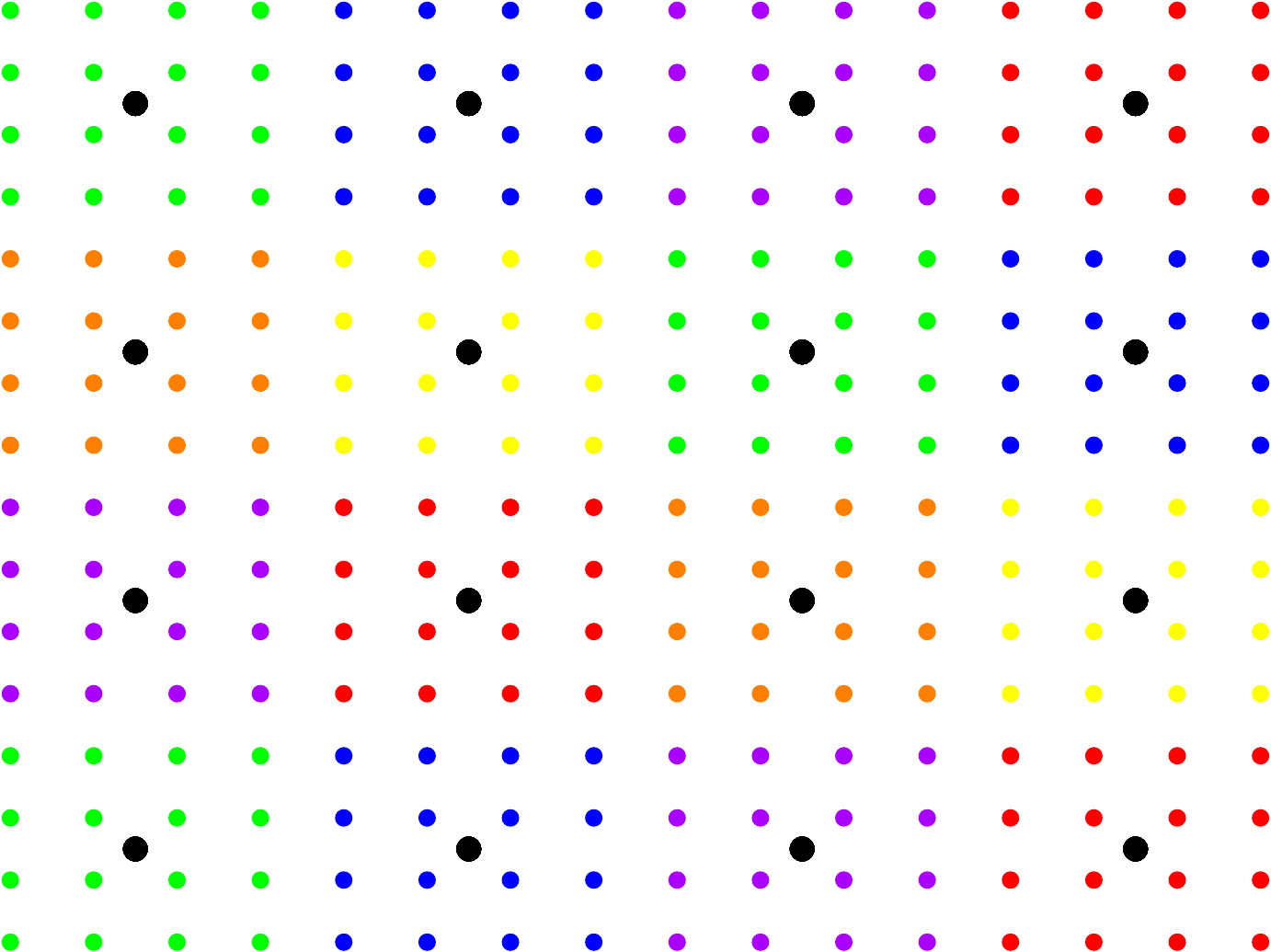}
	\subcaption{Partition after iteration $3$}
\end{subfigure}\hfill
\begin{subfigure}[c]{0.47\textwidth}
	\includegraphics[height=4cm]{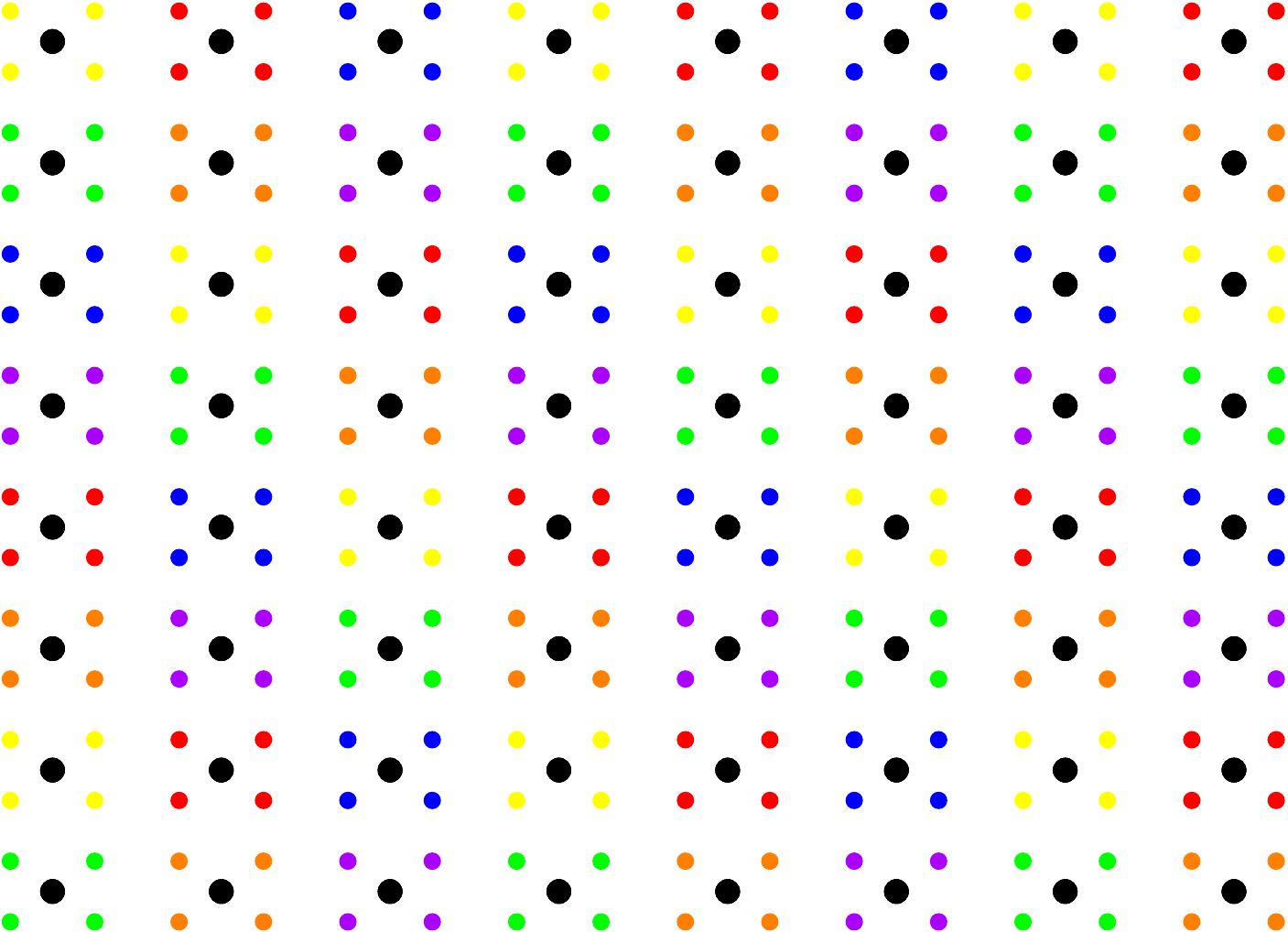}
	\subcaption{Partition after iteration $4$}
\end{subfigure}
\caption{Subsequent iterations of our proposed method and the special case $\alpha=\beta=0$ for point cloud sparsification on a two-dimensional grid. Points belonging to the same subset of the current partition have the same color, while the larger black dots represent the respective cluster centers.}
\label{fig:Octree}
\end{figure}

In Figure \ref{fig:Octree} we demonstrate this special case of the proposed method on a given point cloud for increasing number of iterations. For the sake of visualization we perform this experiment only on a two-dimensional point cloud consisting of $16^2$ points on an equidistant grid, hence, obtaining a quadtree approximation. Points being assigned to the same subset of the current partition are shown in the same color. For each subset we compute the current mean value as cluster center illustrated by a larger black dot. As can be seen between the different iterations the next partition solely depends on the relative position of the points to the current cluster centers.

Note that the user has to terminate the iteration scheme in Algorithm \ref{alg:cp} at the desired level-of-detail by stopping at a certain iteration, as otherwise the octree approximation scheme will iterate until the original point cloud is obtained. This is another disadvantage of this standard method for point cloud sparsification. In Section \ref{ss:noisy_data} we compare the octree approximation scheme to our proposed approach on noisy data.

\subsection{Comparison of fine-to-coarse and coarse-to-fine sparsification strategies}
In the following we compare the results of point cloud sparsification on three different 3D point clouds via the proposed Cut Pursuit algorithm in Section \ref{ssec:cutpursuit} and a direct minimization of the energy functional \eqref{eq:problem} via a primal-dual method using all vertices of the original data. For the following numerical experiments we are using only dense point clouds without any additional geometric noise. We perform minimization of the full variational model via the primal-dual algorithm as introduced in Section \ref{ssec:red_problem} until a relative change $\Delta J_{rel}$ of the energy functional between two subsequent iterations is below $10^{-5}$. The resulting point clouds show many clusters of points that have been attracted to common coordinates. We apply a filtering step on these resulting clusters that removes all but one point in a neighborhood of radius $\epsilon = 10^{-3}$ relative to the size of the data domain. This approach can be seen as \textit{fine-to-coarse} sparsification and has been used before, e.g., in \cite{lozesdiss,lozes_2014_imagestuff}. On the other hand the proposed Cut Pursuit method is clearly a \textit{coarse-to-fine} sparsification strategy.

\subsubsection{Run time comparison of the two strategies for \dt{anisotropic} $\ell_1$ regularization}
To analyze the run time behavior of these approaches, we compare three datasets, namely \textit{Bunny}, \textit{Happy} and \textit{Dragon}, from the Stanford 3D Scanning Repository \cite{stanfordmodels} for \dt{anisotropic $\ell_1$ regularization, i.e., anisotropic total variation for $p=q=1$ in \eqref{eq:p-q-norm},} and two different regularization parameters. Additionally, we compare the simple Cut Pursuit algorithm with a variant in which the reduced problem is solved by a primal-dual method with additional diagonal preconditioning \cite{diagonalPPD} as described in Section \ref{ss:diagprecon}.
For the fine-to-coarse strategy we use the same primal-dual algorithm with diagonal preconditioning as otherwise the optimization would be slower by orders of magnitude. This statement holds also true when using a step size update acceleration as described in \cite{primaldual}.
	
In the following we will compare the run time results of our numerical experiments gathered in Table \ref{t:compare_CPandPD} for two different regularization parameters. As the results of both optimization strategies is almost identically and cannot be seen visually on the resulting sparsified point clouds, we refrain from showing any point cloud visualization here. However, the results of point cloud sparsification using \dt{anisotropic} $\ell_1$ regularization can be seen in Figure \ref{fig:CompareMethods} below.

\begin{table}[tbh]
	\resizebox{\textwidth}{!}{
		\begin{tabular}{|C{0.39\textwidth}|C{0.17\textwidth}|C{0.17\textwidth}|C{0.17\textwidth}|}
			\hline
			\multicolumn{1}{|c|}{Data set / Regularization parameter} & \multicolumn{1}{C{0.17\textwidth}|}{Direct optimiz.\newline via PPD}& \multicolumn{1}{C{0.17\textwidth}|}{Cut Pursuit\newline with PD} & \multicolumn{1}{C{0.17\textwidth}|}{Cut Pursuit\newline with PPD} \\
			\hline
			Bunny ($35,947$ points):  & & &\\
			 $\alpha = 0.001$ & $50$s   & $413$s &$\fg{23}$s\\
			 $\alpha = 0.01$ & $119$s   & $145$s &$\fg{6}$s\\ \hline
			Dragon ($435,545$ points):  & & &\\
			  $\alpha = 0.005$  & $5,636$s  & $1,097$s  & $\fg{117,4}$s \\
			  $\alpha = 0.002$  & $6,314$s  & $591$s  & $\fg{62,5}$s \\ \hline
			Happy ($543,524$ points):  & & &\\
			  $\alpha = 0.0002$ & $1,568$s & $2,400$s &$\fg{222,2}$s\\ 
			  $\alpha = 0.001$ & $3,407$s & $1,186$s &$\fg{93.2}$s\\
			\hline
		\end{tabular}
	}
	\caption{Comparison of overall runtime in seconds between a direct optimization via primal-dual optimization (PD) and Cut Pursuit where the reduced problem was solved with a primal-dual and with a diagonal Preconditioned primal-dual (PPD) algorithm on different point cloud data for \dt{anisotropic $\ell_1$ regularization and} two different regularization parameters $\alpha$. \label{t:compare_CPandPD}}
\end{table}

The first and most obvious observation is that the direct optimization approach, i.e., the fine-to-coarse strategy, performs only well for small point clouds as in the \textit{Bunny} data set. For the \textit{Happy} and \textit{Dragon} data set the measured run time is not reasonable for any real application. Additionally, one can see that the direct optimization approach takes increasingly longer for higher regularization parameters $\alpha$. This means that for an increasingly sparse results one has to take a longer computation time into account.

While comparing the two variants of the Cut Pursuit algorithms with only using primal-dual optimization (PD) and the diagonally preconditioned primal-dual algorithm (PDD) we observed that the latter one is always faster than the simple version. This is due to the reasons we pointed out earlier in Section \ref{ss:diagprecon}, i.e., bad conditioning due to different amount of vertices gathered in each subset of the partition and highly varying values of the accumulated weights between these subsets. Notably, in all tested experiments except the \textit{Bunny} data set the simple Cut Pursuit algorithm \textit{without preconditioning} is significantly faster than the fine-to-coarse strategy using even preconditioned primal-dual minimization.

one interesting observation is that the coarse-to-fine strategy proposed in this paper is not necessarily getting faster for an increasingly higher regularization as one might expect. This can be seen for the \textit{Happy} data set. The reason for this is that there are two opposite effects overlapping. With increasingly higher regularization parameter $\alpha$ one can expect the total number of graph cuts to decrease, which leads to less iterations in Algorithm \ref{alg:proposed_scheme}. However, at the same time the costs of computing the maximum flow within the finite weighted graph may increase due to the increased flow graph edge capacities. Thus, in some cases the computational costs of minimum graph cuts outweighs the benefit of computing less graph cuts for higher regularization parameters. In case of the preconditioned primal-dual algorithm the overall run times are less affected by the choice of the regularization parameters compared to the standard primal-dual variant.

In summary we can observe that for large point clouds a Cut Pursuit approach with a diagonal preconditioned version of the primal-dual optimization algorithm is significantly faster than a direct fine-to-coarse strategy.

\subsubsection{Run time comparison and visual differences for \dt{anisotropic} $\ell_1$ and $\ell_0$ regularization}
In Figure \ref{fig:CompareMethods} we compare the sparsification results of the \dt{anisotropic} $\ell_1$\dt{, i.e., the case $p=q=1$ in \eqref{eq:p-q-norm},} and the \dt{weighted} $\ell_0$ regularization on the three different test data sets used before. We choose the regularization parameters for both methods in such a way that they yield roughly the same number of points in the resulting sparse point clouds. As one can see, the resulting point cloud of the \dt{$\ell_1$} regularization for different data sets always induces a very strong \dt{blocky} structure to the data. This is clear as we have chosen an anisotropic TV regularization for this experiment. In addition to this structural bias one can also observe a volume shrinkage in the resulting point cloud. This is comparable to the typical contrast loss when using \dt{anisotropic} TV regularization for denoising on images, e.g., cf. \cite{debias}. On the other hand we see that \dt{the proposed} $\ell_0$ regularization yields a much more detailed and bias-free result.
\begin{figure}[tbh]
	\subcaptionbox{Result of \dt{anisotropic} $\ell_1$ regularization with $\alpha=0.5$. Time needed: $126$ seconds. Points left: $7794$ $(21.68 \%)$}%
	[.48\textwidth]{\includegraphics[trim={0 0 0  0}, width=0.48\textwidth]{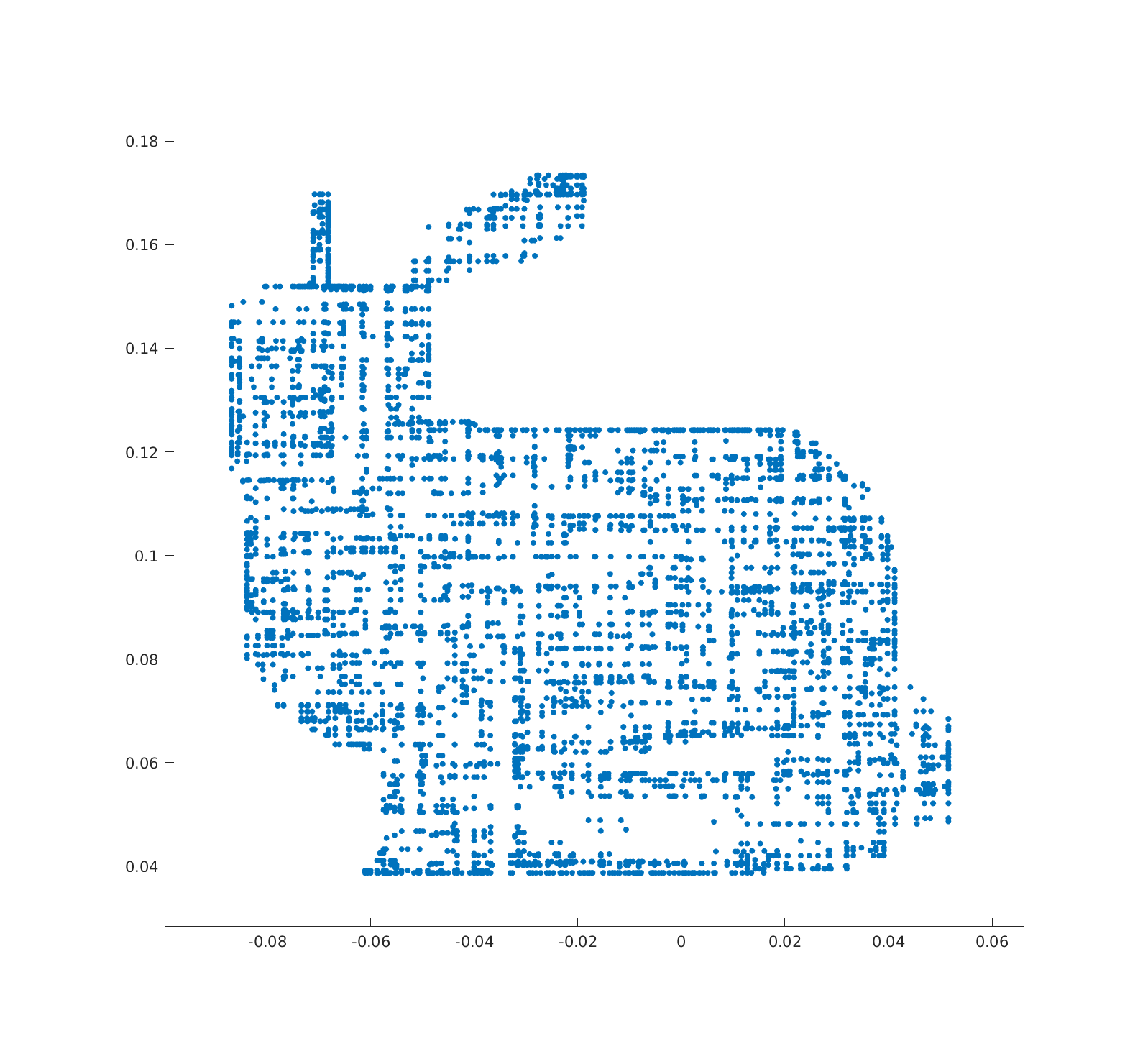} }\hfill
	\subcaptionbox{Result of proposed $\ell_0$ regularization with $\alpha=5$. Time needed: $3.7$ seconds. Points left: $8034$ $(22.35 \%)$}
	[.48\textwidth]{\includegraphics[trim={0 0 0  0}, width=0.48\textwidth]{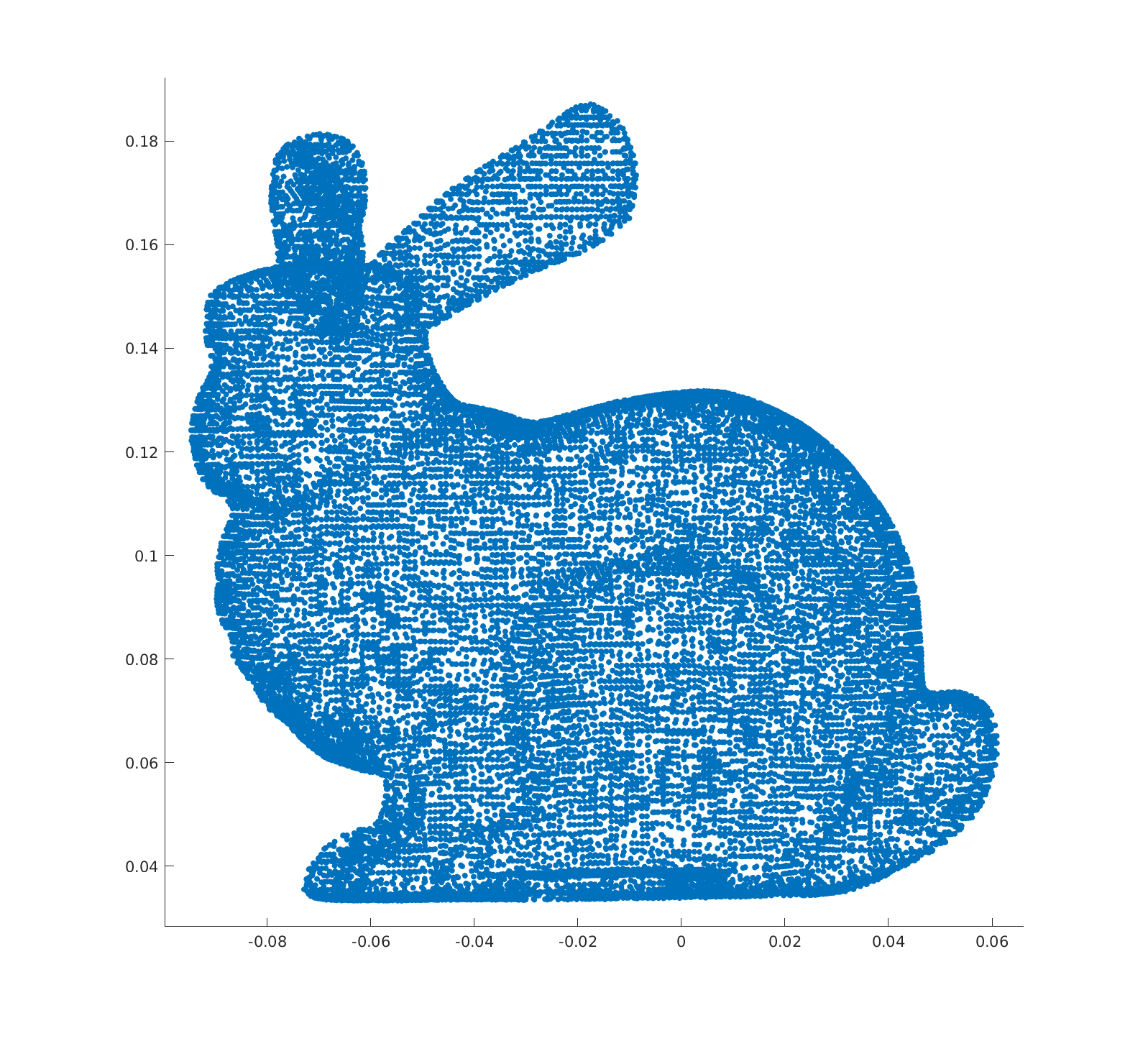} }
	\\
	\subcaptionbox{Result of \dt{anisotropic} $\ell_1$ regularization with $\alpha=0.5$. Time needed: $3,305$ seconds. Points left: $26247$ $(4.83 \%)$}%
	[.22\textwidth]{\includegraphics[trim={0 0 0  0}, width=0.22\textwidth]{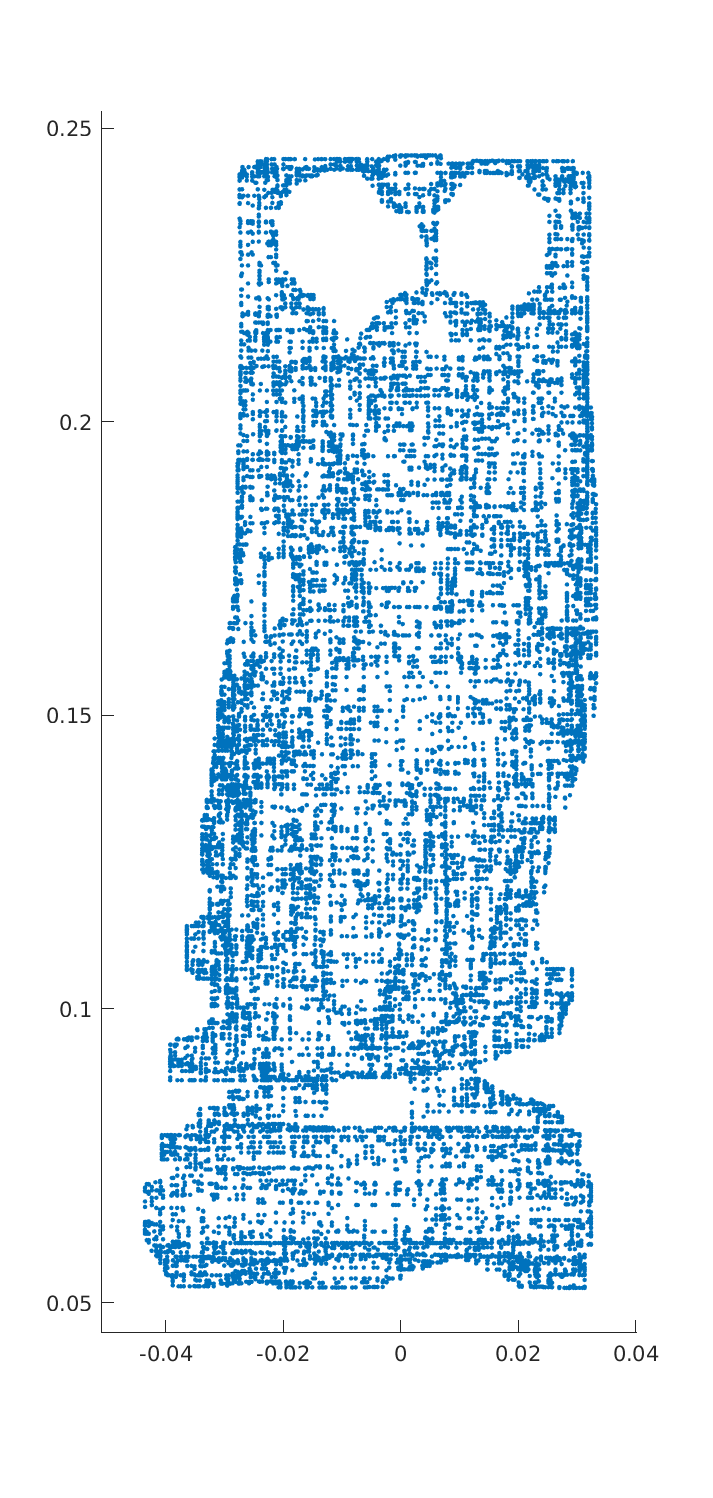} }\hfill
	\subcaptionbox{Result of proposed $\ell_0$ regularization with $\alpha=4$. Time needed: $94$ seconds. Points left: $29168$ $(5.37 \%)$}
	[.22\textwidth]{\includegraphics[trim={0 0 0  0}, width=0.22\textwidth]{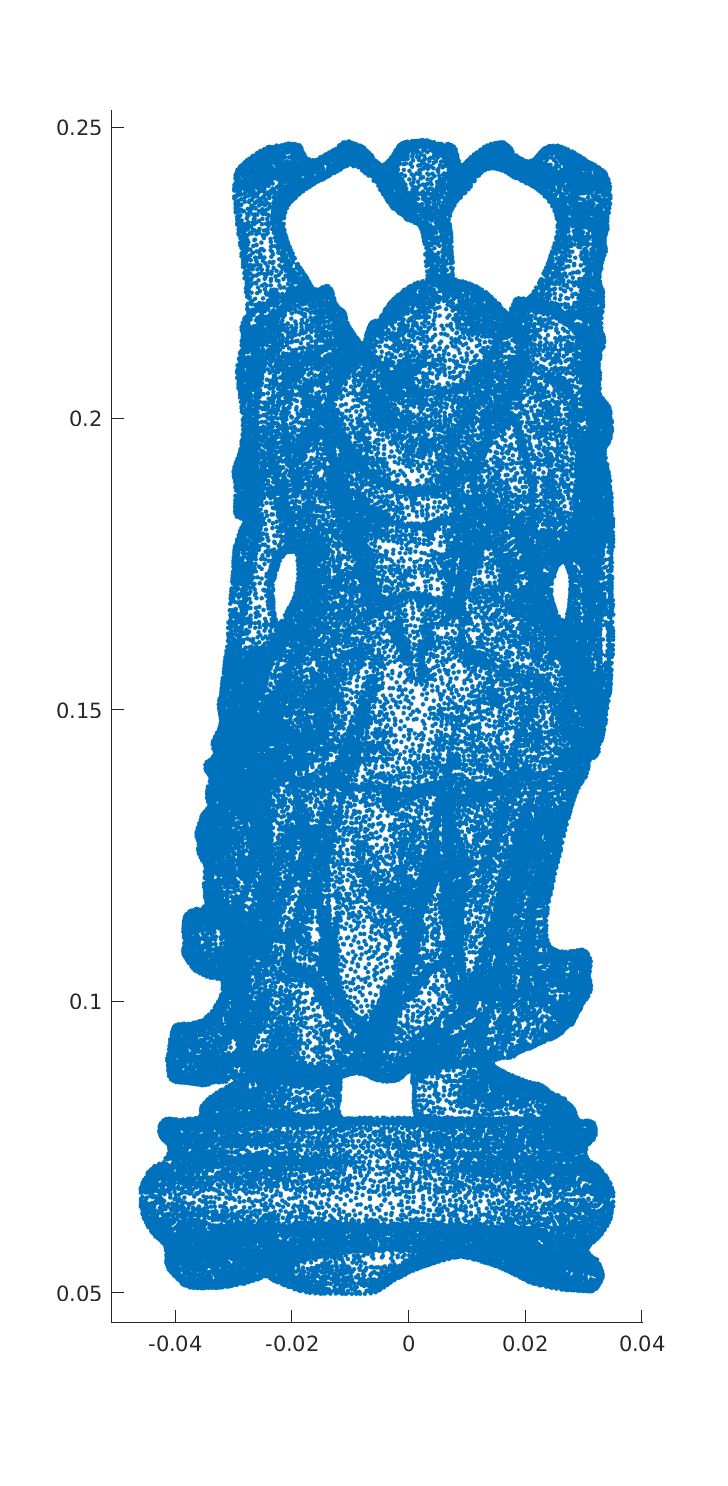} }\hfill
	\subcaptionbox{Result of \dt{anisotropic} $\ell_1$ regularization with $\alpha=0.5$. Time needed: $3,305$ seconds. Points left: $26247$ $(4.83 \%)$}%
	[.22\textwidth]{\includegraphics[trim={0 0 0  0}, width=0.22\textwidth]{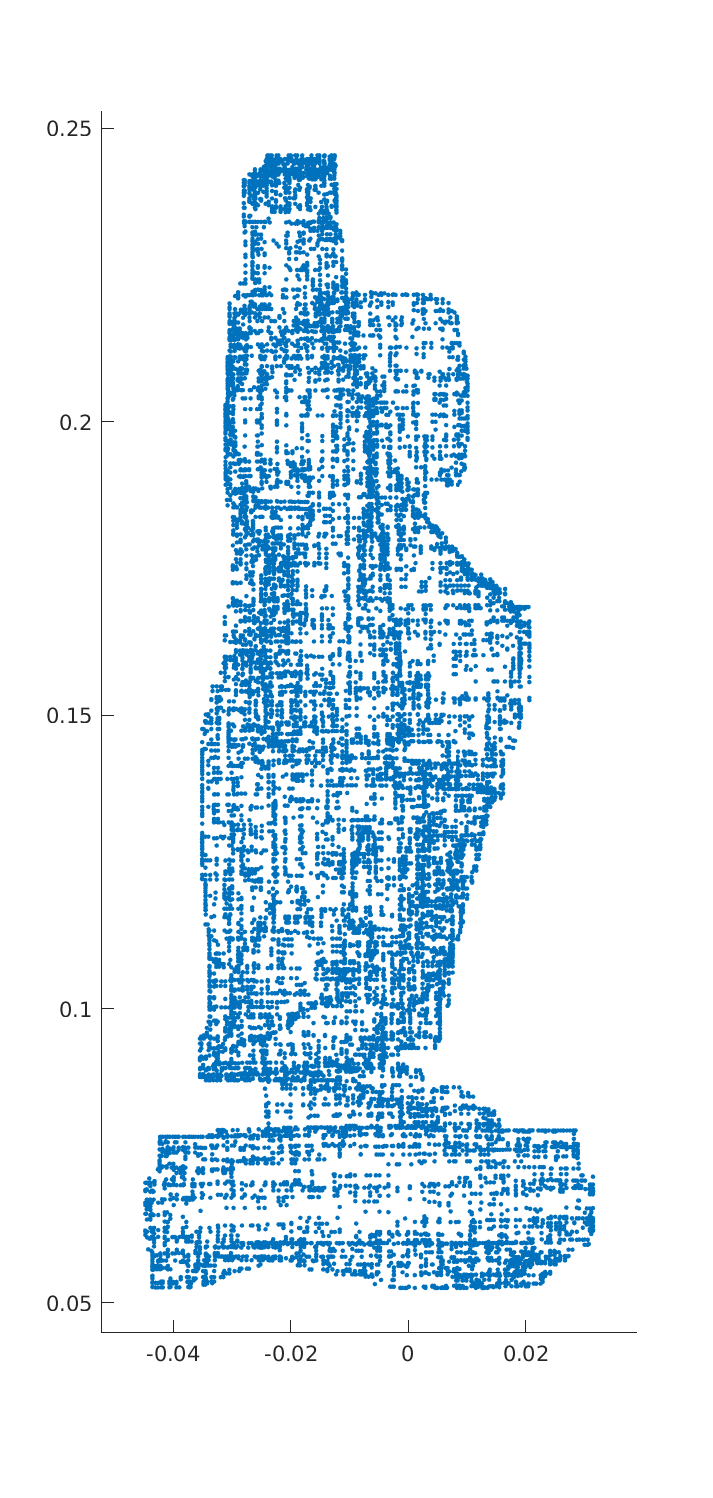} }\hfill
	\subcaptionbox{Result of proposed $\ell_0$ regularization with $\alpha=4$. Time needed: $94$ seconds. Points left: $29168$ $(5.37 \%)$}
	[.22\textwidth]{\includegraphics[trim={0 0 0  0}, width=0.22\textwidth]{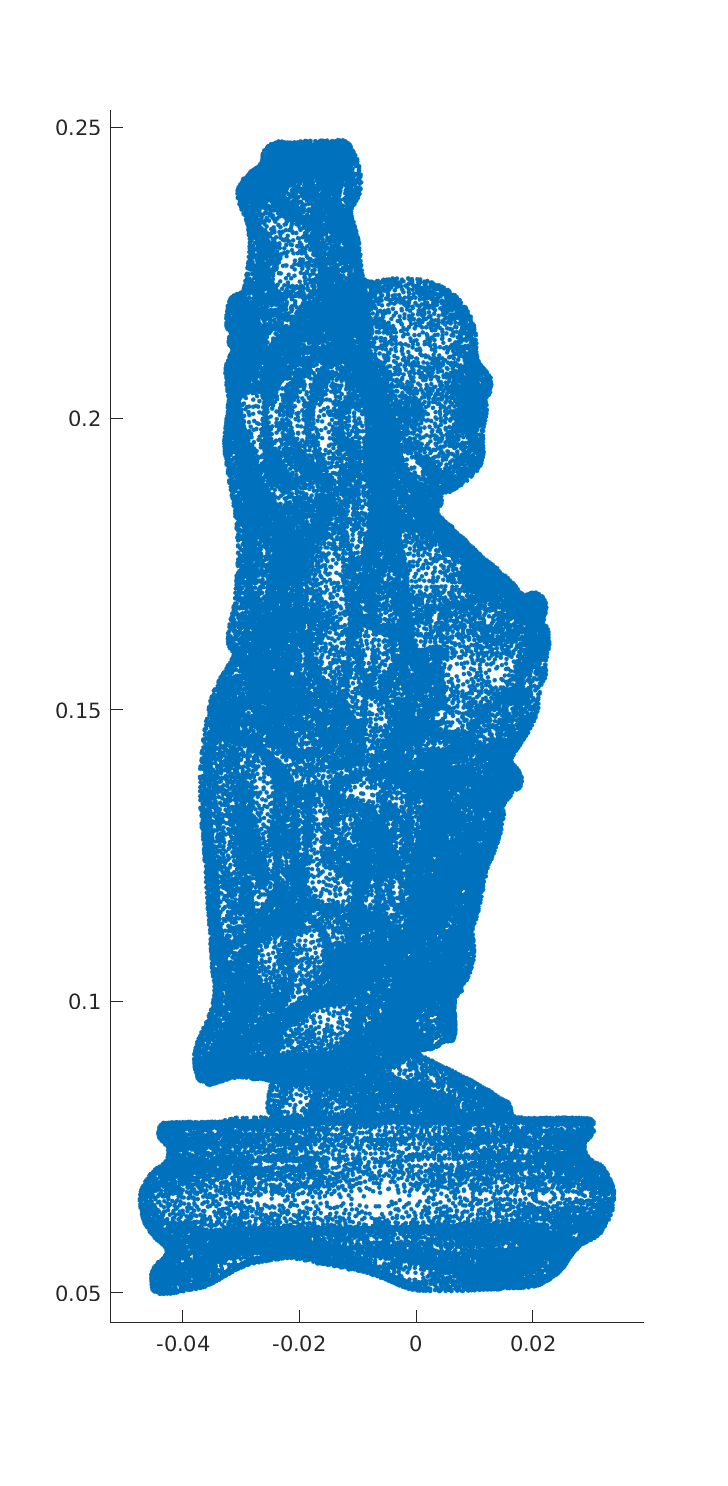} }
	\\
	\subcaptionbox{Result of \dt{anisotropic} $\ell_1$ regularization with $\alpha=0.5$. Time needed: $8,239$ seconds. Points left: $16438$ $(3.77 \%)$}[.48\textwidth]{\includegraphics[trim={0 02cm 2cm  0}, width=0.48\textwidth]{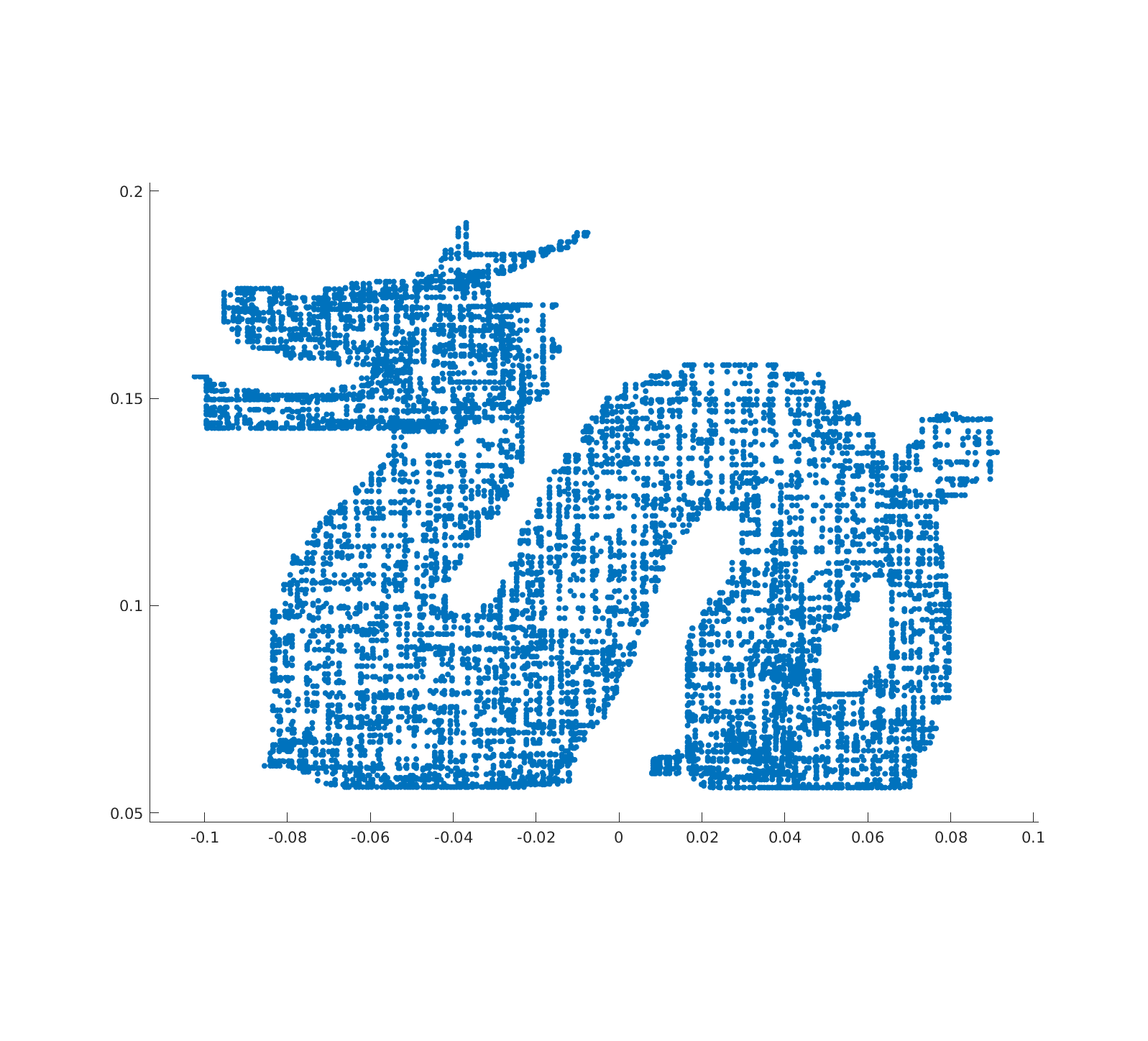} }\hfill
	\subcaptionbox{Result of proposed $\ell_0$ regularization with $\alpha=6.5$. Time needed: $70.6$ seconds. Points left: $16138$ $(3.71 \%)$}	[.48\textwidth]{\includegraphics[trim={0 2cm 2cm  0}, width=0.48\textwidth]{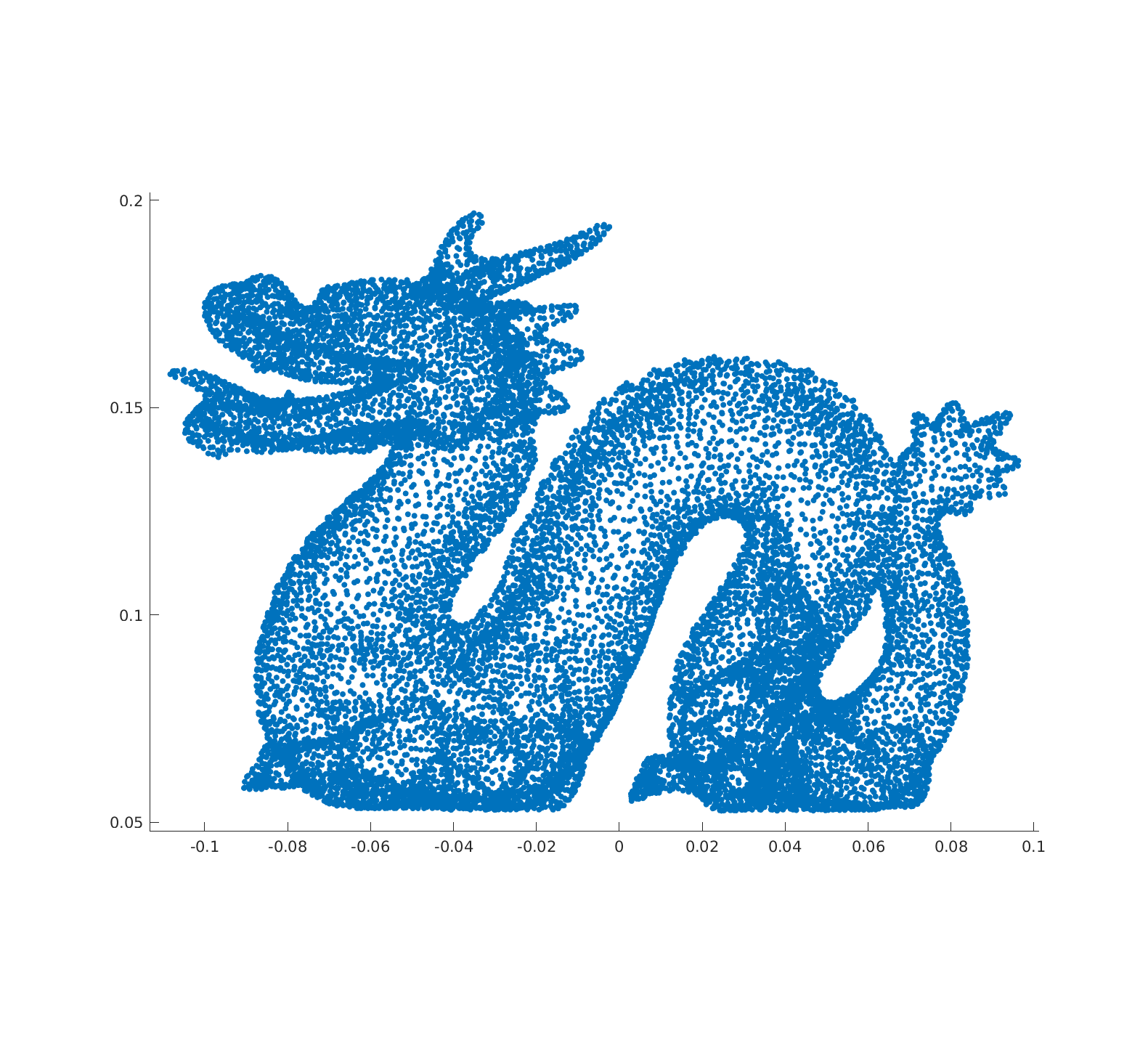} }
	\caption{Comparison of results by \dt{anisotropic} $\ell_1$ regularization (left) and by $\ell_0$ regularization (right) for point cloud sparsification.}
	\label{fig:CompareMethods}
\end{figure}

As we would like to highlight by this experiment, the striking argument for our proposed method is the significant efficiency gain for point cloud sparsification, which can be seen by comparing the computational times in Table \ref{t:computational_time}. Comparing the fine-to-coarse strategy proposed in \cite{lozesdiss,lozes_2014_imagestuff} there is a speed-up by a factor of between \fg{$60$ to $290$} depending on the number of points in the original data set.
Note that \dt{modified Cut Pursuit scheme \ref{alg:proposed_schemel0} with the proposed weighted $\ell_0$} minimizes the energy \dt{very efficiently} since the solution of the reduced problem \eqref{eq:reducedproblem} is just the mean value of each partition. \dt{This speed up of two orders of magnitude (without exploiting any parallelization techniques) renders the proposed method valuable for applications in which point cloud data has to be processed and analysed in near-realtime conditions.}
\begin{table}[tbh]
\resizebox{\textwidth}{!}{
	\begin{tabular}{|R{0.06\textwidth}L{0.14\textwidth} |C{0.32\textwidth}|C{0.3\textwidth}|}
		\hline\multicolumn{2}{|c|}{Data set} & \multicolumn{1}{C{0.32\textwidth}|}{Direct optimization via PPD}& \multicolumn{1}{C{0.3\textwidth}|}{Weighted $\ell_0$ Cut Pursuit} \\
		\hline
		\multicolumn{1}{|r}{\multirow{2}{*}{Bunny:}} &  \multicolumn{1}{l|}{\multirow{2}{*}{$35,947$ points}}   & $126$s   & $2.3$s \\
		& & $8,034$ points left ($22.35\%$) & $8,065$ points left ($22.42\%$)\\ \hline
		\multicolumn{1}{|r}{\multirow{2}{*}{Buddha:}} &  \multicolumn{1}{l|}{\multirow{2}{*}{$543,524$ points}} & $3,305$s & $47$s   \\
		& & $29,168$ points left ($5.37\%$) &  $28,484$ points left ($5.24\%$) \\ \hline
        \multicolumn{1}{|r}{\multirow{2}{*}{Dragon:}} &  \multicolumn{1}{l|}{\multirow{2}{*}{$435,545$ points}}  & $8,239$s & $28.2$s  \\
		&& $16,438$ points left ($3.77\%$) & $16,405$ points left ($3.77\%$)\\
		\hline
	\end{tabular}
	}
\caption{Comparison of overall runtime in seconds between a direct optimization via preconditioned primal-dual optimization (PPD) and the weighted $\ell_0$ Cut Pursuit algorithm for point cloud sparsification tested on the three different datasets presented in Figure \ref{fig:CompareMethods}.\label{t:computational_time}}
\end{table}

To summarize our observations above, we can state that when noise-free data is given, point cloud sparsification can best be performed using the weighted $\ell_0$ regularization as described in Algorithm \ref{alg:proposed_schemel0}.

\subsection{Comparison of \dt{qualitative impact of} different regularization functionals}
In the following experiment we compare the results of point cloud sparsification of the Cut Pursuit algorithm with three different choices of regularization functionals $Q$ and different parameters settings for $\beta$ in the reduced problem \eqref{eq:reducedproblem}.
In particular, we compare the impact of \dt{isotropic} $\ell_2$ and \dt{both anisotropic as well as isotropic} $\ell_1$ regularization in Algorithm \ref{alg:proposed_scheme} and the weighted $\ell_0$ regularizaton described in Algorithm \ref{alg:proposed_schemel0} on the appearance of the resulting sparse point clouds.

\subsubsection{Comparison of \dt{isotropic} $\ell_2$ vs. \dt{anisotropic} $\ell_1$ regularization}
In the first experiment we choose the \textit{Bunny} data set without any geometric noise perturbations and visually compare different levels of point cloud sparsification for \dt{isotropic} $\ell_2$ \dt{($p=q=2$)} and \dt{anisotropic} $\ell_1$ \dt{($p=q=1$)} regularization.
\begin{figure}[tbh]
\begin{subfigure}[c]{0.47\textwidth}
	\includegraphics[height=4cm]{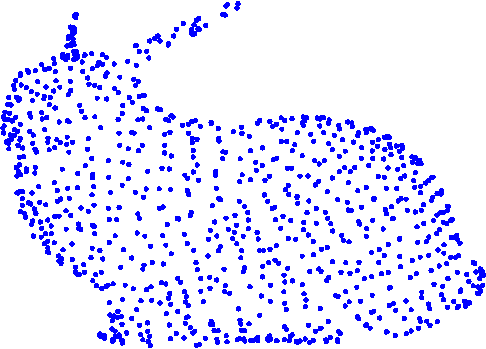} 
	\subcaption{Point cloud for $p=q=2$, $\beta=10$}
	\label{fig:cmp_reg_a}
\end{subfigure}
\hfill
\begin{subfigure}[c]{0.47\textwidth}
	\includegraphics[height=4cm]{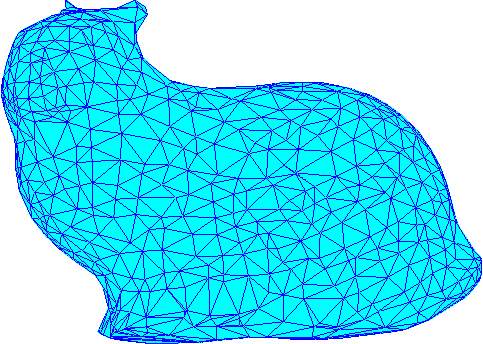}
	\subcaption{Triangulation for $p=q=2$, $\beta=10$}
\end{subfigure}\\[0.4cm]
\begin{subfigure}[c]{0.47\textwidth}
	\includegraphics[height=4cm]{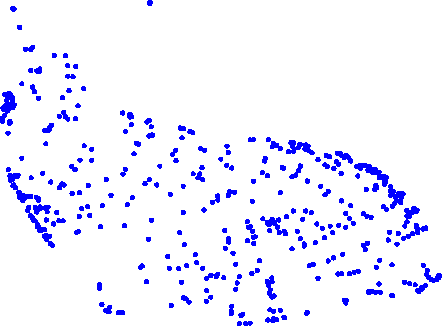}
	\subcaption{Point cloud for $p=q=2$, $\beta=70$}
\end{subfigure}
\hfill
\begin{subfigure}[c]{0.47\textwidth}
	\includegraphics[height=4cm]{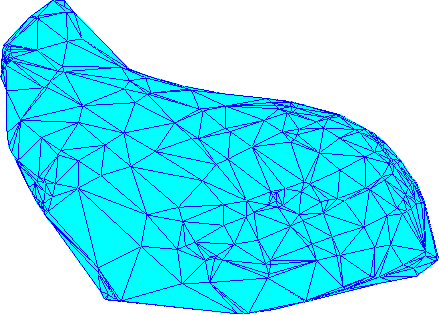}
	\subcaption{Triangulation for $p=q=2$, $\beta=70$}
	\label{fig:cmp_reg_d}
\end{subfigure}\\[0.4cm]
\begin{subfigure}[c]{0.47\textwidth}
	\includegraphics[height=4cm]{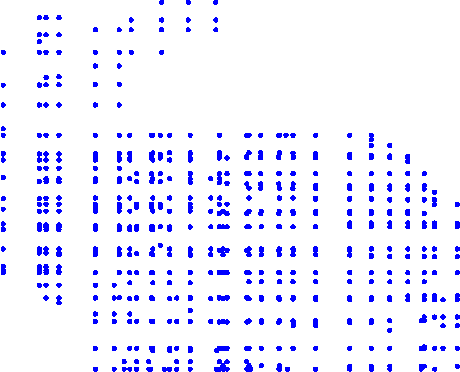}
	\subcaption{Point cloud for $p=q=1$, $\beta=10$}
	\label{fig:cmp_reg_e}
\end{subfigure} \hfill
\begin{subfigure}[c]{0.47\textwidth}
	\includegraphics[height=4cm]{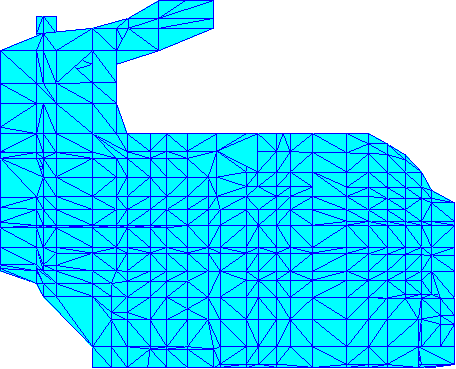}
	\subcaption{Triangulation for $p=q=1$, $\beta=10$}
\end{subfigure}\\[0.4cm]
\begin{subfigure}[c]{0.47\textwidth}
	\includegraphics[height=4cm]{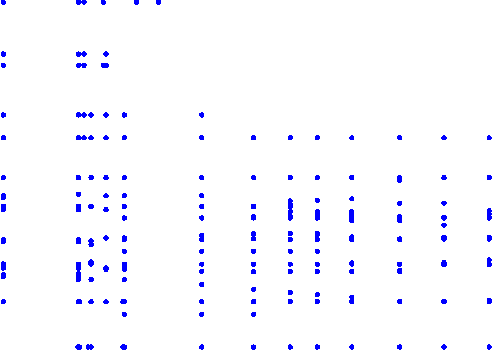}
	\subcaption{Point cloud for $p=q=1$, $\beta=50$}
\end{subfigure} \hfill
\begin{subfigure}[c]{0.47\textwidth}
	\includegraphics[height=4cm]{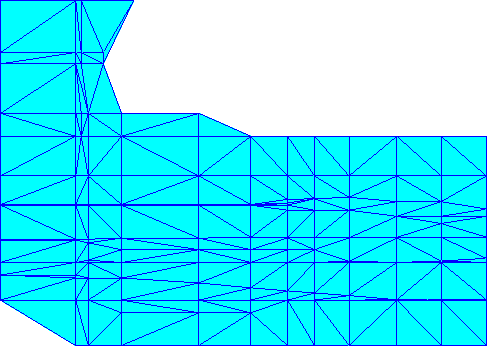}
	\subcaption{Triangulation for $p=q=1$, $\beta=50$}
	\label{fig:cmp_reg_h}
\end{subfigure}
\caption{Comparison of point cloud sparsification results using different regularization settings based on the parameters $p,q$ and $\beta$ in \eqref{eq:reducedproblem}.}
\label{fig:ComparisonRegularizers}
\end{figure}
In the left column of Figure \ref{fig:ComparisonRegularizers} we show the sparse point clouds after convergence of the proposed minimization scheme in Algorithm \ref{alg:proposed_scheme}, and in the right column we show the resulting triangulation of the models surface. As one can observe with increasing regularization parameter $\beta$ we force the solution to be more biased in terms of the appearance we dictate by the regularizer.  In particular, if we choose $p=q=2$ the solution of the reduced problem \eqref{eq:reducedproblem} corresponds to filtering by the standard graph Laplacian, which leads to rather smooth and round surface approximations as illustrated in Figure \ref{fig:cmp_reg_a}-\subref{fig:cmp_reg_d}. On the other hand, if we choose $p=q=1$ we perform an anisotropic total variation filtering on the 3D points, which yields the results presented in Figure \ref{fig:cmp_reg_e}-\subref{fig:cmp_reg_h}. The resulting sparse point clouds show planar surface regions with steep jumps between them. This blocky appearance can be interpreted as a well-known artifact of anisotropic total variation regularization known as 'staircase effect', e.g., in image processing. This regularization is rather inappropriate for 3D point clouds of natural objects but might be interesting for special application cases in which the scanned object is known to have planar surfaces, e.g., in industrial fabrication.

\dt{
\subsubsection{Visual difference between anisotropic/isotropic $\ell_1$ and $\ell_0$ regularization}
In the following we compare the qualitative difference of point cloud sparsification between the anisotropic ($p=q=1$) and the isotropic ($p=1, q=2$) $\ell_1$ regularization term in the reduced problem \eqref{eq:reducedproblem}. We use the same regularization terms for the minimum graph cut step \eqref{eq:partitionproblem}, i.e., $Q = R$ in the alternating minimization scheme \ref{alg:proposed_scheme}. In the isotropic case we use the proposed heuristic method for determining an optimal descent direction as explained in Section \ref{sss:optimal_descent}, Case 3 ($p=1, q>1$). Additionally, we visually compare the results of the $\ell_1$ regularized point cloud sparsification with the results of the proposed weighted $\ell_0$ regularization from Section \ref{ss:l0}. For the latter case we use the same graph cut method as for the isotropic $\ell_1$ regularization.

As point cloud data we chose the \fg{Fandisk model (cf. \cite{bilateralfiltering})}, which consists of \fg{a combination of roundish and} flat surfaces as well as sharp edges. This data set is often used to evaluate the effectiveness of point cloud denoising methods in the literature, e.g., see \fg{\cite{bilateralfiltering,bilateralnormal2011, l0denoising2015}}.
For this experiment we constructed a symmetrized $k$-nearest neighbor graph for $k= \fg{7}$ and set the regularization parameter $\alpha$ such that all resulting point clouds have roughly the same compression rate of $17\%$. The regularization parameters used for each regularization term are indicated in Figure \ref{fig:Fandisk}.

In Figure \ref{fig:Fandisk} the results of point cloud sparsification with the three described regularization terms are displayed. The left column shows a mesh triangulation of the data, while we present a corresponding surface rendering with Phong lighting in the right column.
The first row in Figure \ref{fig:original_Fandisk_blue}-\ref{fig:original_Fandisk} shows the original Fandisk data set, which consists of $11,949$ 3D points. In the second and third row we present the results of anisotropic and isotropic $\ell_1$ regularization, respectively. As can be seen the anisotropic $\ell_1$ regularization induces flat surface regions that coincide with the planes that are spanned between the coordinate axes of the data set. This is not surprising as the anisotropic $\ell_1$ regularization decouples the 3D point coordinates and only enforces regularity within each dimension. This leads to typical staircase artifacts in Figure \ref{fig:fandisk_aniso_l1_blue}-\ref{fig:fandisk_aniso_l1} as it is well-known for total variation regularization in imaging applications. On the other hand, using the isotropic $\ell_1$ regularization term for $q=2$ couples the coordinates of each 3D point and hence does not lead to any staircase artifacts as can be seen in Figure \ref{fig:fandisk_iso_l1_blue}-\ref{fig:fandisk_iso_l1}. The resulting surfaces appear much smoother compared to the previously discussed anisotropic $\ell_1$ regularization. While the round and flat parts of the data set are well-preserved by this regularization term the sharp edges are lost as can been observed. The last row shows the results of our proposed weighted $\ell_0$ regularization. As we demonstrate in Figure \ref{fig:fandisk_iso_l0_blue}-\ref{fig:fandisk_iso_l0} the mesh triangulation of the sparsified point cloud is much more regular compared to our experiments with the $\ell_1$ regularization terms. Additionally, the sharp edge features of the Fandisk data set are significantly better preserved.
\begin{figure}[tbh]
    \begin{subfigure}[c]{0.47\textwidth}
        \includegraphics[height=4cm, trim=4cm 9cm 4cm 9cm, clip]{./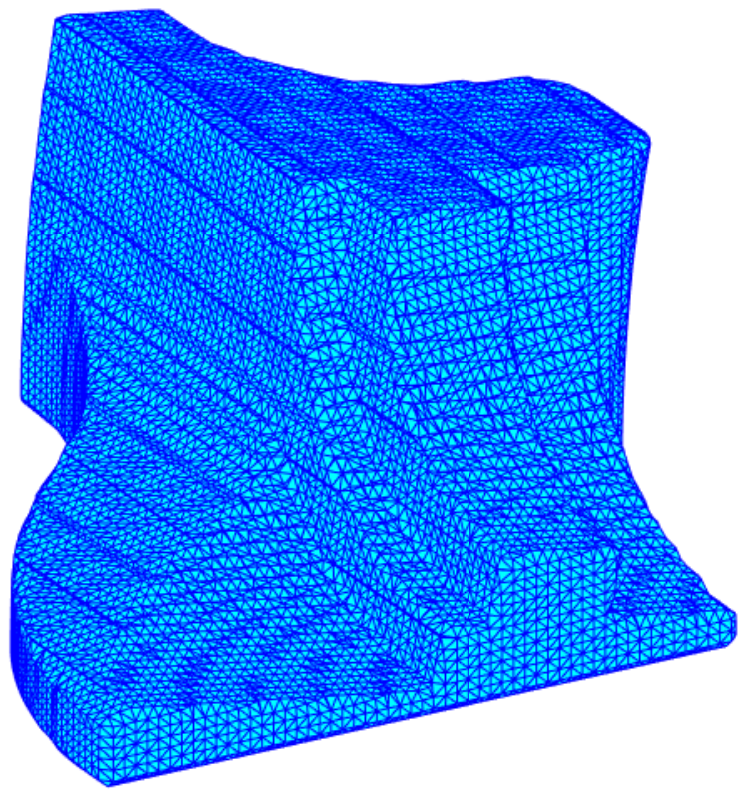}
        \subcaption{Mesh triangulation of the full point cloud of the Fandisk data set}
        \label{fig:original_Fandisk_blue}
    \end{subfigure}\hfill
	\begin{subfigure}[c]{0.47\textwidth}
        \includegraphics[height=4cm, trim=4cm 9cm 4cm 9cm, clip]{./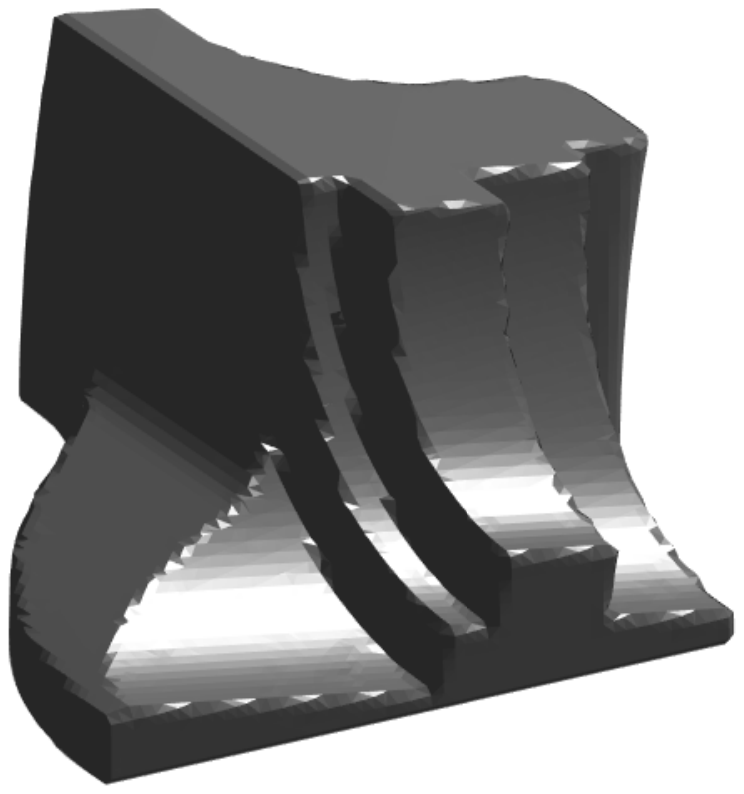}
        \subcaption{Surface rendering of the full point cloud of the Fandisk data set}
        \label{fig:original_Fandisk}
    \end{subfigure}\\[0.4cm]    
	\begin{subfigure}[c]{0.47\textwidth}
        \includegraphics[height=4cm, trim=4cm 9cm 4cm 9cm, clip]{./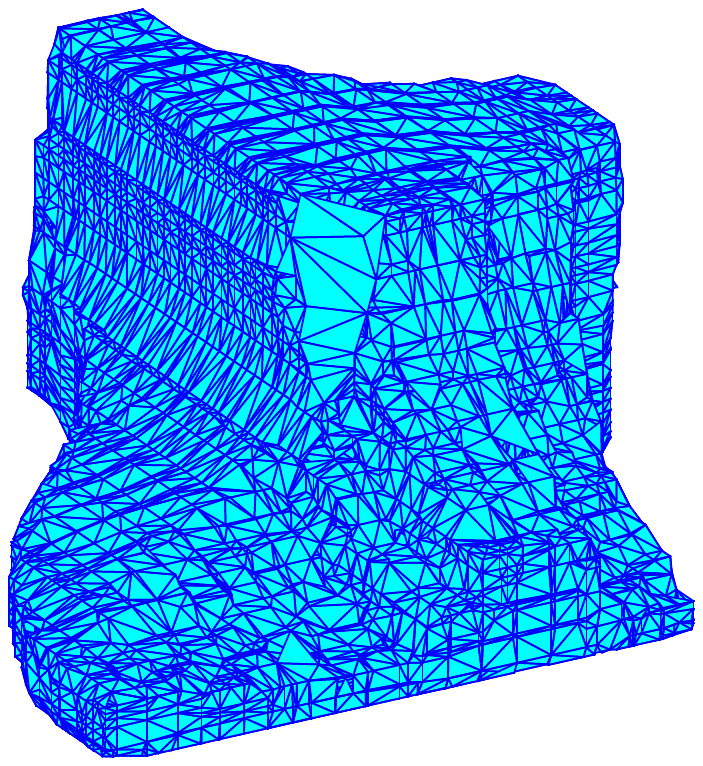}
        \subcaption{Mesh triangulation of the sparsified point cloud using anisotropic $\ell_1$ regularization ($p=q=1$) for $\alpha = \beta = 0.15$}
        \label{fig:fandisk_aniso_l1_blue}
    \end{subfigure}\hfill    
    \begin{subfigure}[c]{0.47\textwidth}
        \includegraphics[height=4cm, trim=4cm 9cm 4cm 9cm, clip]{./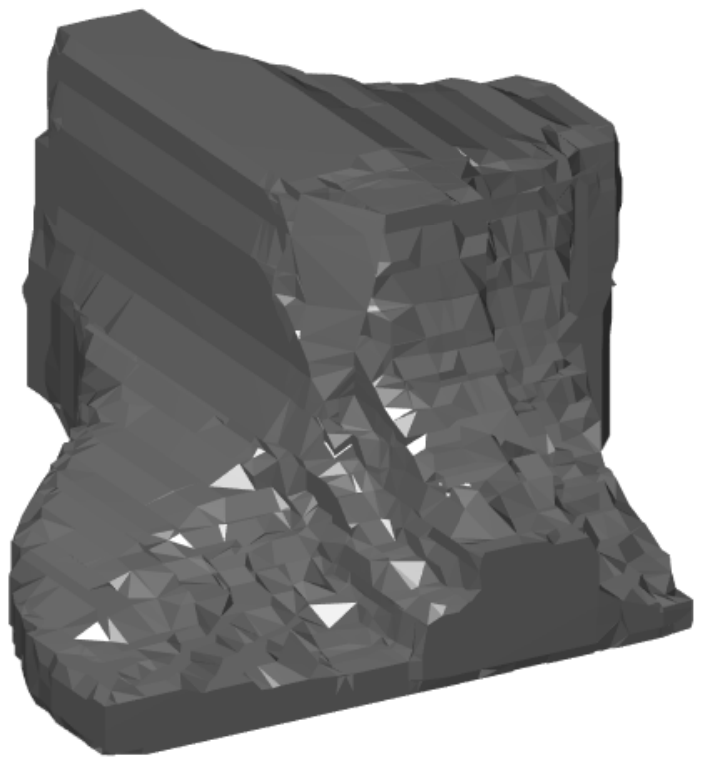}
        \subcaption{Surface rendering of the sparsified point cloud using anisotropic $\ell_1$ regularization ($p=q=1$) for $\alpha = \beta = 0.15$}
        \label{fig:fandisk_aniso_l1}
    \end{subfigure}
    \\[0.4cm]
	\begin{subfigure}[c]{0.47\textwidth}
        \includegraphics[height=4cm, trim=4cm 9cm 4cm 9cm, clip]{./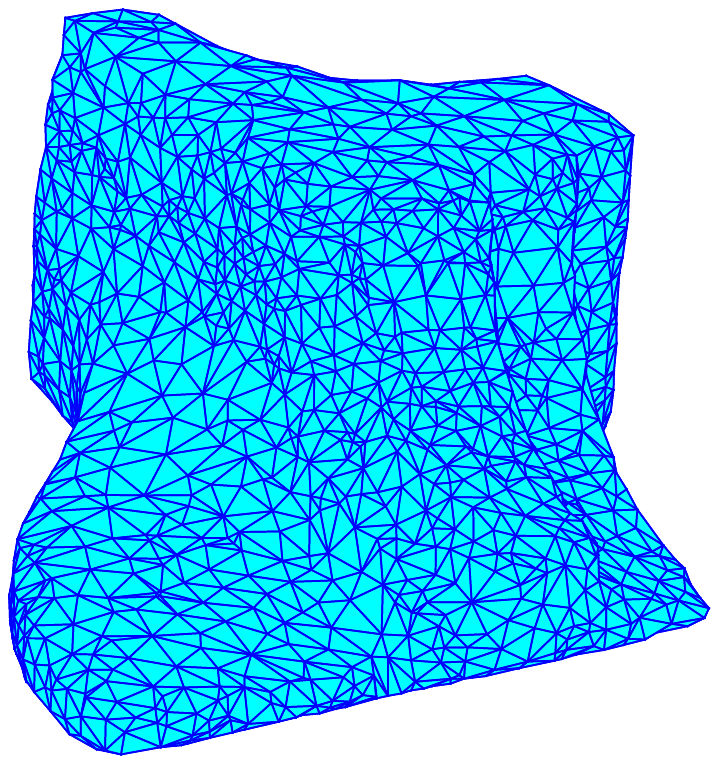}
        \subcaption{Mesh triangulation of the sparsified point cloud using isotropic $\ell_1$ regularization ($p=1, q=2$) for $\alpha = \beta = 0.065$}
        \label{fig:fandisk_iso_l1_blue}
    \end{subfigure}\hfill
    \begin{subfigure}[c]{0.47\textwidth}
        \includegraphics[height=4cm, trim=4cm 9cm 4cm 9cm, clip]{./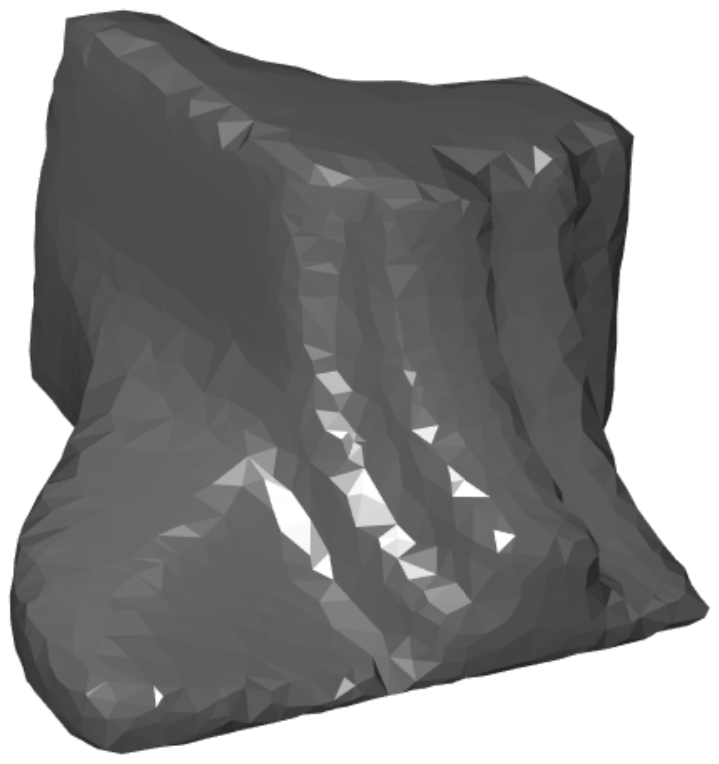}
        \subcaption{Surface rendering of the sparsified point cloud using anisotropic $\ell_1$ regularization ($p=1, q=2$) for $\alpha = \beta = 0.065$}
        \label{fig:fandisk_iso_l1}
    \end{subfigure}
    \\[0.4cm]
    \begin{subfigure}[c]{0.47\textwidth}
        \includegraphics[height=4cm, trim=4cm 9cm 4cm 9cm, clip]{./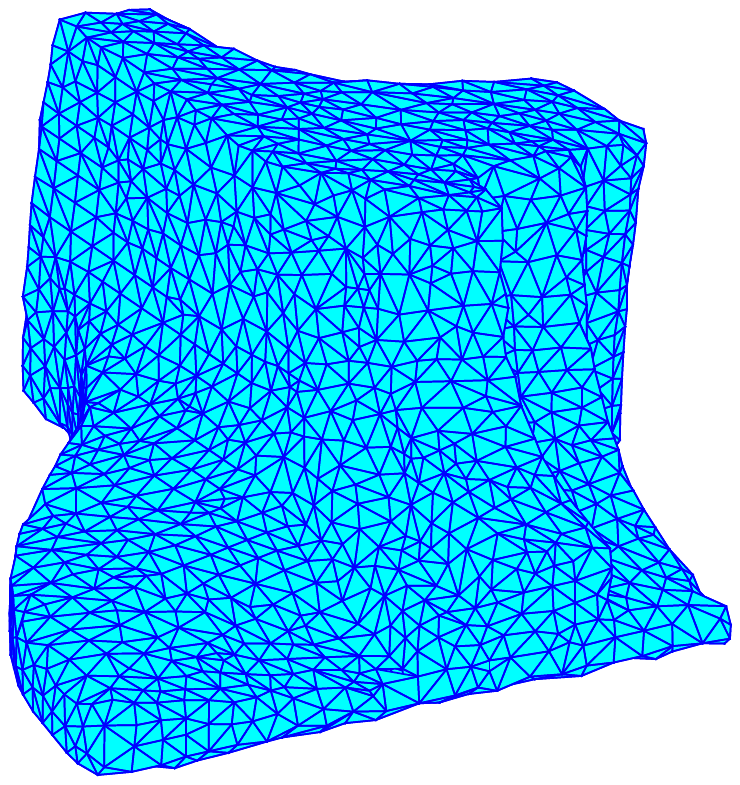}
        \subcaption{Mesh triangulation of the sparsified point cloud using the proposed $\ell_0$ regularization for $\alpha =  \beta = 0.05$}
        \label{fig:fandisk_iso_l0_blue}
    \end{subfigure}\hfill
    \begin{subfigure}[c]{0.47\textwidth}
        \includegraphics[height=4cm, trim=4cm 9cm 4cm 9cm, clip]{./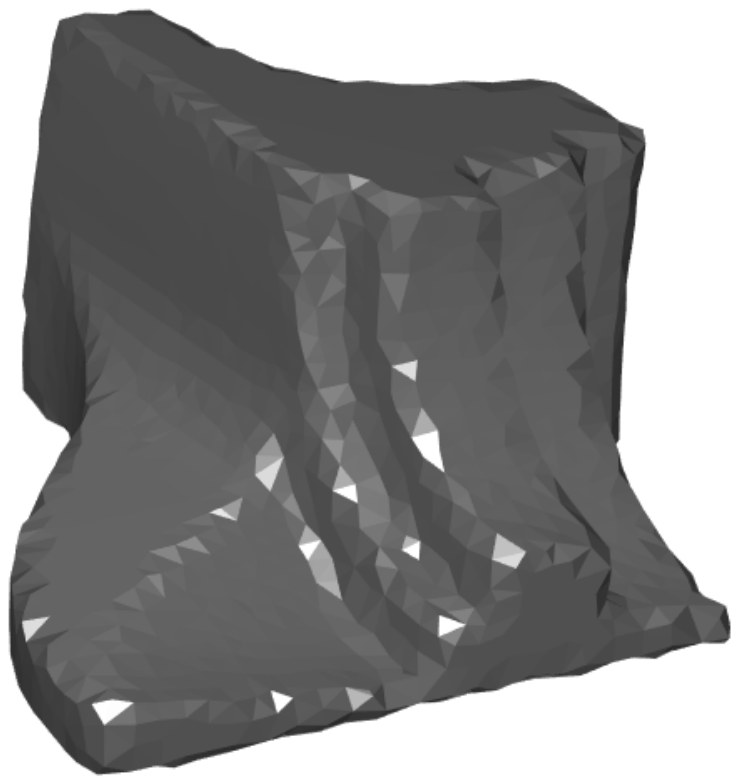}
        \subcaption{Surface rendering of the sparsified point cloud using the proposed $\ell_0$ regularization for $\alpha = \beta = 0.05$}
        \label{fig:fandisk_iso_l0}
    \end{subfigure}
    \caption{Comparison of point cloud sparsification results of the Fandisk model for anisotropic/isotropic $\ell_1$ and the proposed $\ell_0$ regularization. The regularization parameter $\alpha$ is chosen such that all compressed point clouds consist only of $17\%$ of the original point cloud.} 
    \label{fig:Fandisk} 
\end{figure}
}

\subsubsection{Different levels of sparsification using weighted $\ell_0$ regularization}
When looking at the proposed scheme in Algorithm \ref{alg:proposed_schemel0} we can observe that the partitioning problem only depends on the regularization parameter $\alpha$. The solution of the reduced problem is independent on the regularization and corresponds to the mean value of the data in each subset of the partition. Thus, $\alpha$ can be interpreted as a control parameter for the expected level-of-detail and thus of the resulting number of points as we demonstrate in Figure \ref{fig:BunnyApproximation} and Table \ref{t:reg_comparison}. Due to the fact that this approach leads to a sparsification result that is close to the original point cloud, there is no volume shrinkage effect and hence no need for an explicit debiasing step as discussed in Section \ref{ss:debiasing} below.
\begin{table}
	\begin{tabular}{|C{0.1\textwidth}|
			R{0.07\textwidth}L{0.075\textwidth}|
			R{0.07\textwidth}L{0.075\textwidth}|
			R{0.07\textwidth}L{0.075\textwidth}|
			R{0.07\textwidth}L{0.075\textwidth}|}
		\hline
		Data set & \multicolumn{2}{c|}{$\alpha = 0.1$} & \multicolumn{2}{c|}{$\alpha = 0.5$} & \multicolumn{2}{c|}{$\alpha = 1$} & \multicolumn{2}{c|}{$\alpha = 5$} \\\hline
		Bunny  & $35947$ & $(100\%)$& $22651$ & $(63\%)$ &$8034$ & $(22.3\%)$ & $1473$ & $(4\%)$    \\\hline
		Buddha & $543524$ & ($100\%)$ & $213901$ & $(39.4\%)$&$85719$ & $(15.8\%)$& $24555$ & $(4.5\%)$ \\\hline
		Dragon  & $435545$ & $(100\%)$ & $177448$ & $(40.7\%)$&$71040$ & $(16.3\%)$& $19844$ &  $(4.5\%)$ \\
		\hline
	\end{tabular}
	\caption{Comparison of sparsification rates of different regularization parameter selection for a successive graph cut approach with \eqref{eq:partitionproblem}. It shows the number of leftover points and the overall percentage. \label{t:reg_comparison}}
\end{table}
\begin{figure}[htb]
\begin{subfigure}[c]{0.47\textwidth}
	\includegraphics[height=4cm]{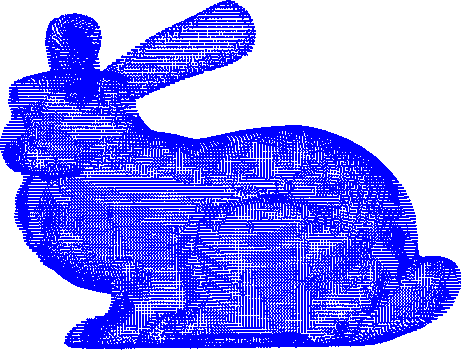}
	\subcaption{Full point cloud data of the \textit{Bunny} model\\($35,947$ points)}
\end{subfigure}\hfill
\begin{subfigure}[c]{0.47\textwidth}
	\includegraphics[trim={1.2cm 1.2cm 1.2cm 1.2cm},clip,height=4cm]{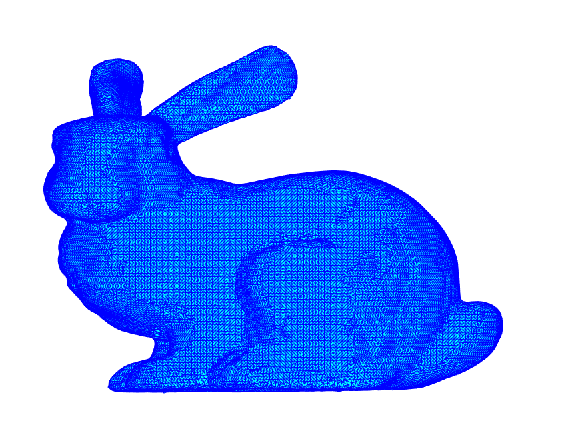}
	\subcaption{Triangulation of full point cloud data of the \textit{Bunny} model}
\end{subfigure}\\[0.4cm]
\begin{subfigure}[c]{0.47\textwidth}
	\includegraphics[height=4cm]{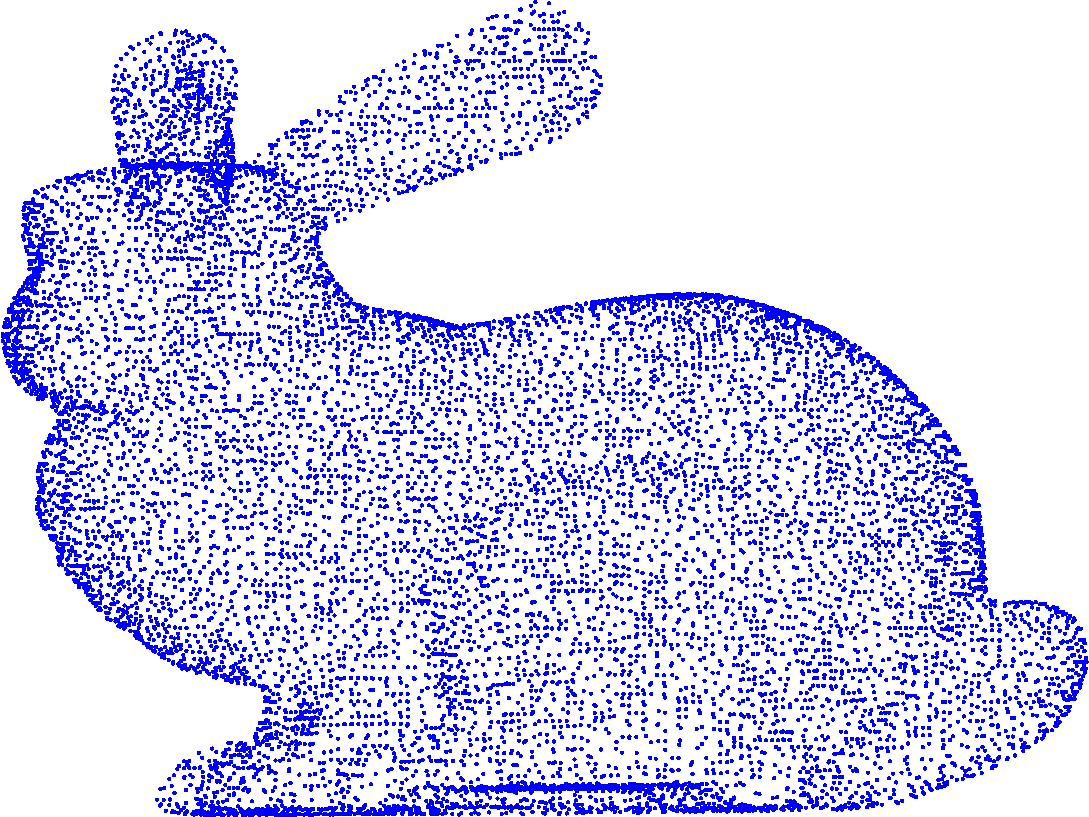}
	\subcaption{Point cloud sparsification for $\alpha=0.3$\\($12,105$ points)}
\end{subfigure}\hfill
\begin{subfigure}[c]{0.47\textwidth}
 	\includegraphics[trim={2.6cm 2.6cm 2.6cm 2.6cm},clip,height=4cm]{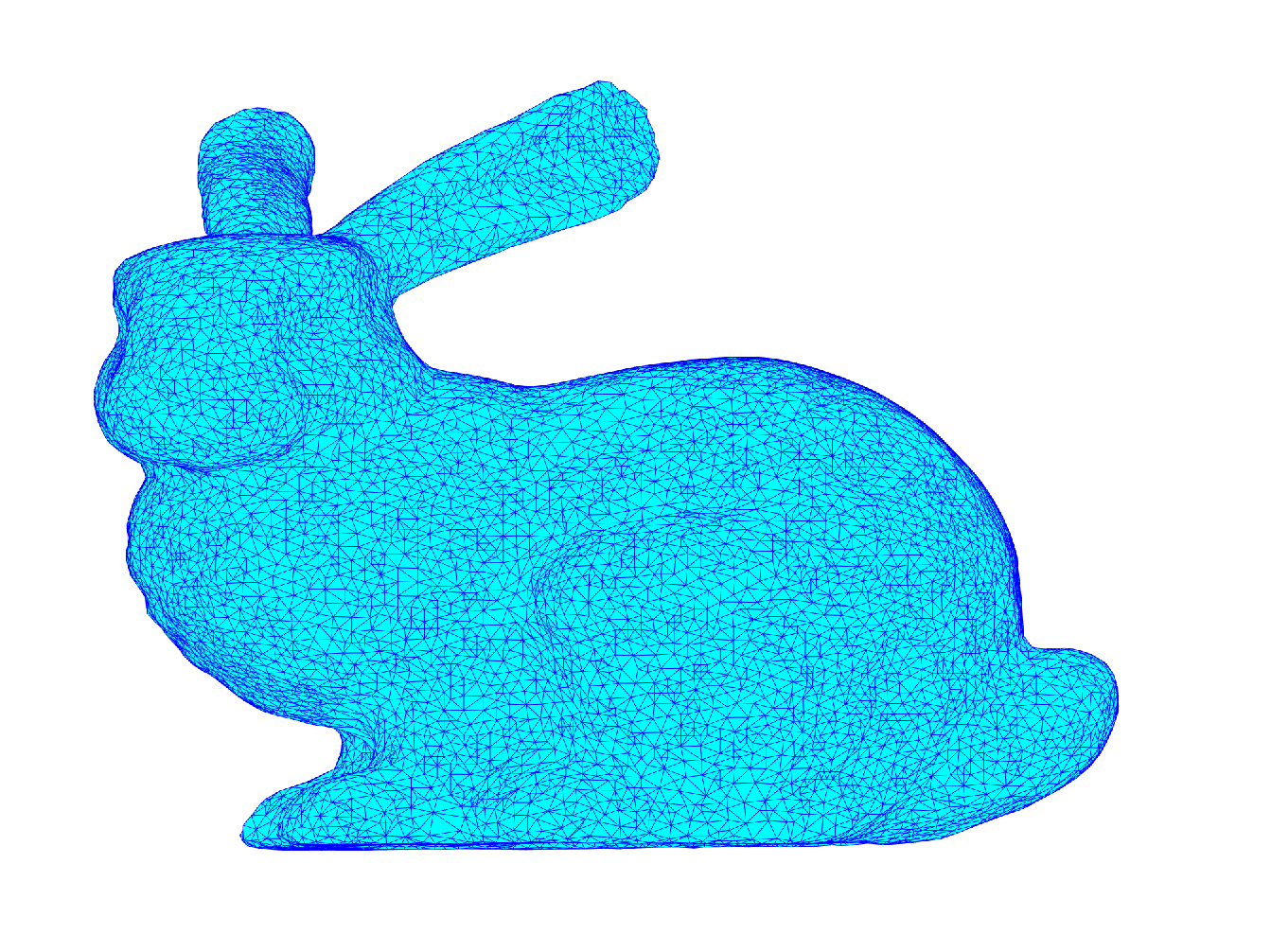}
	\subcaption{Triangulation of sparsified point cloud data for $\alpha=0.3$}
\end{subfigure}\\[0.4cm]
\begin{subfigure}[c]{0.47\textwidth}
	\includegraphics[height=4cm]{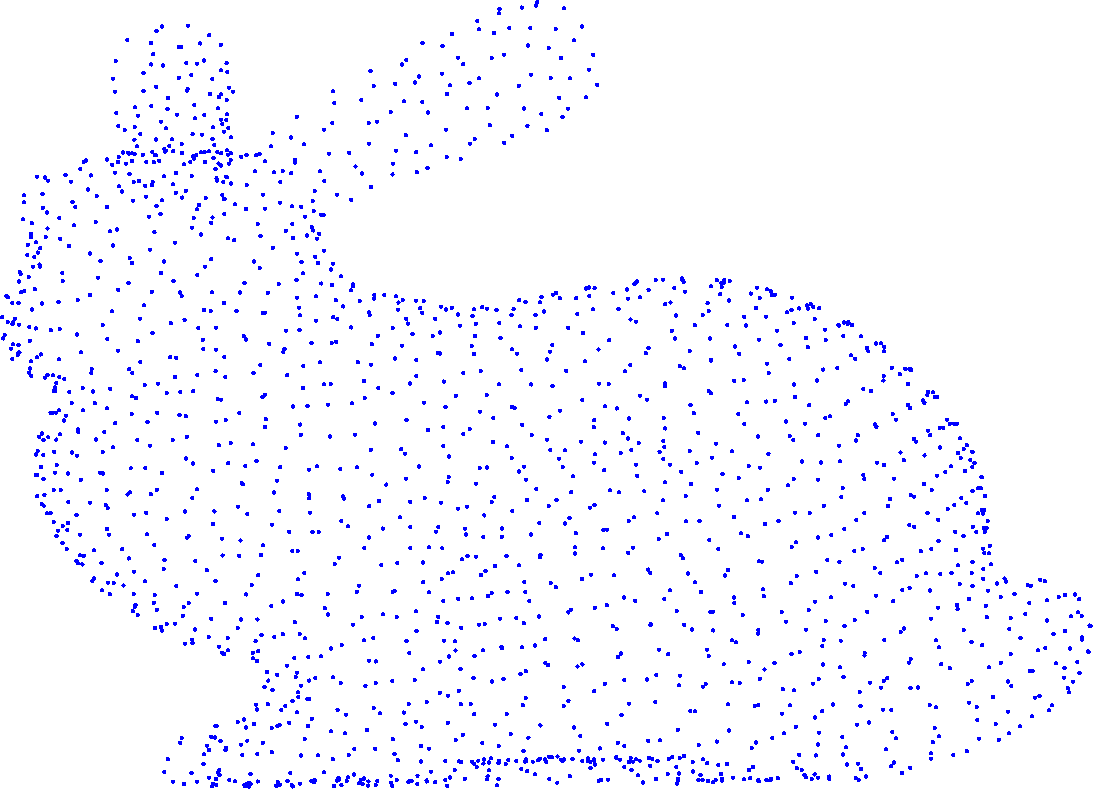}
	\subcaption{Point cloud sparsification for $\alpha=1$\\($1,912$ points)}
\end{subfigure}\hfill
\begin{subfigure}[c]{0.47\textwidth}
	\includegraphics[height=4cm]{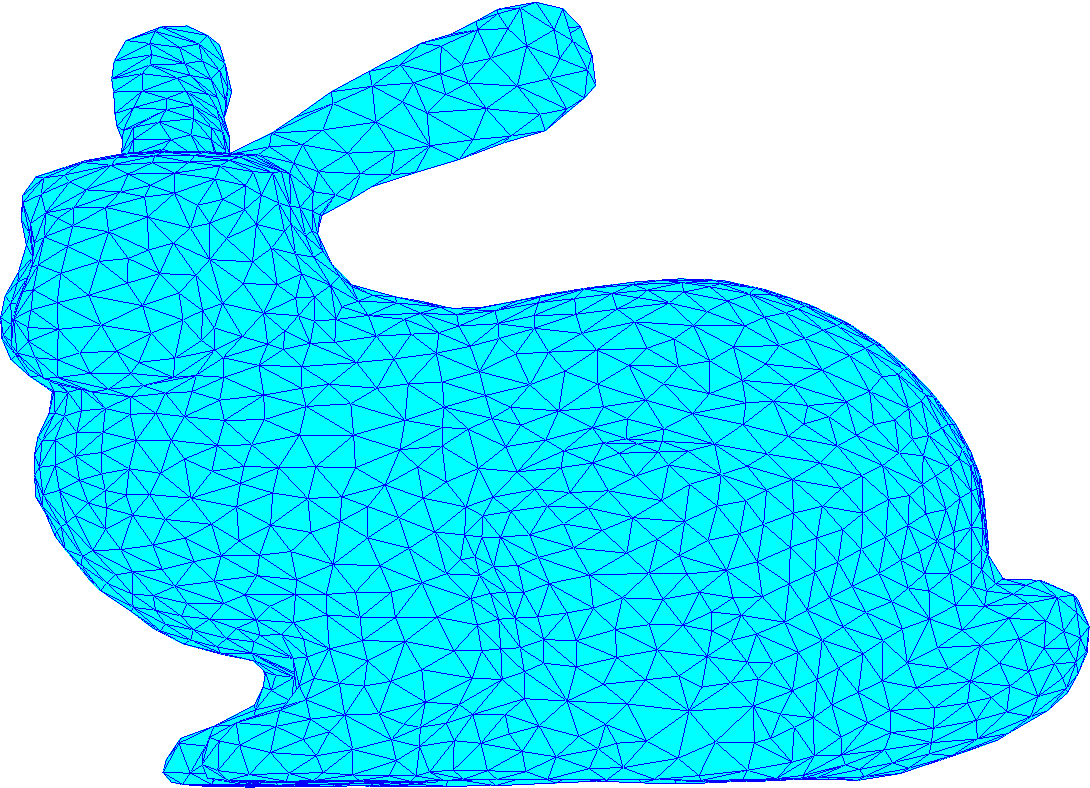}
	\subcaption{Triangulation of sparsified point cloud data for $\alpha=1$}
\end{subfigure}\\[0.4cm]
\begin{subfigure}[c]{0.47\textwidth}
	\includegraphics[height=4cm]{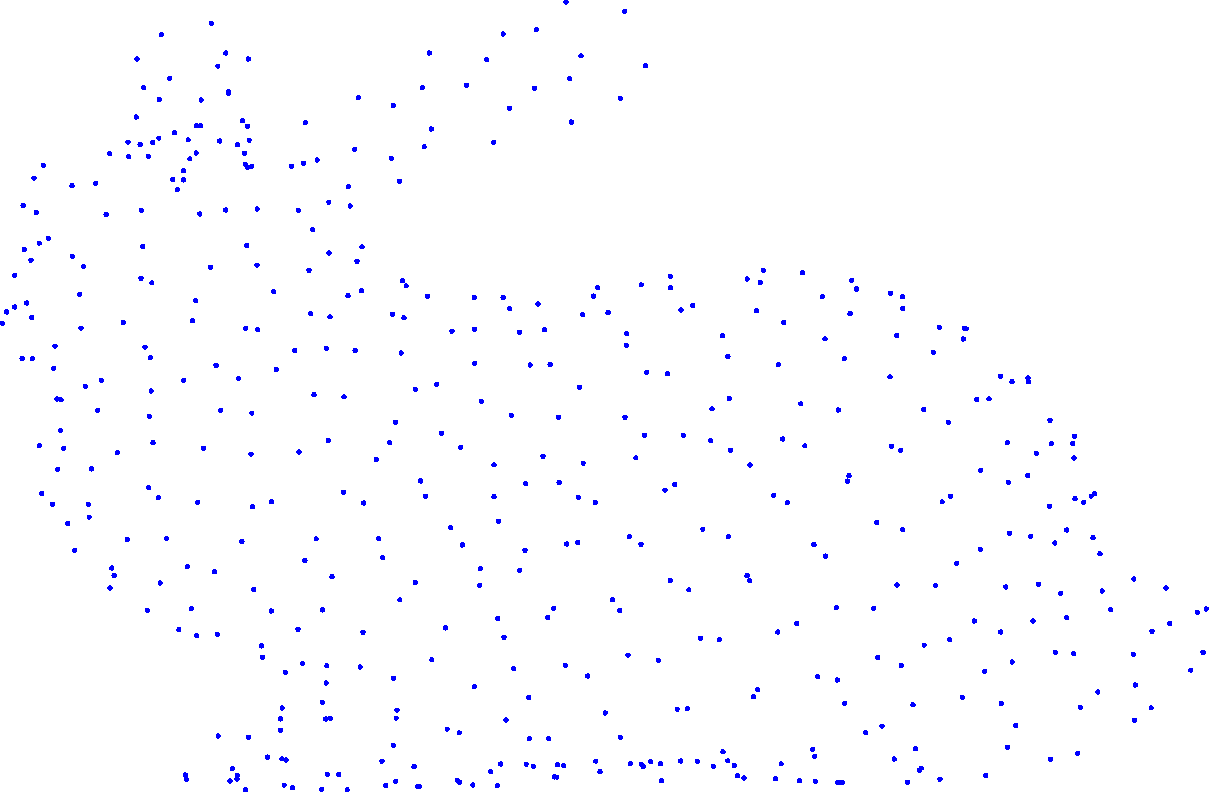}
	\subcaption{Point cloud sparsification for $\alpha=3.5$\\($498$ points)}
\end{subfigure}\hfill
\begin{subfigure}[c]{0.47\textwidth}
	\includegraphics[height=4cm]{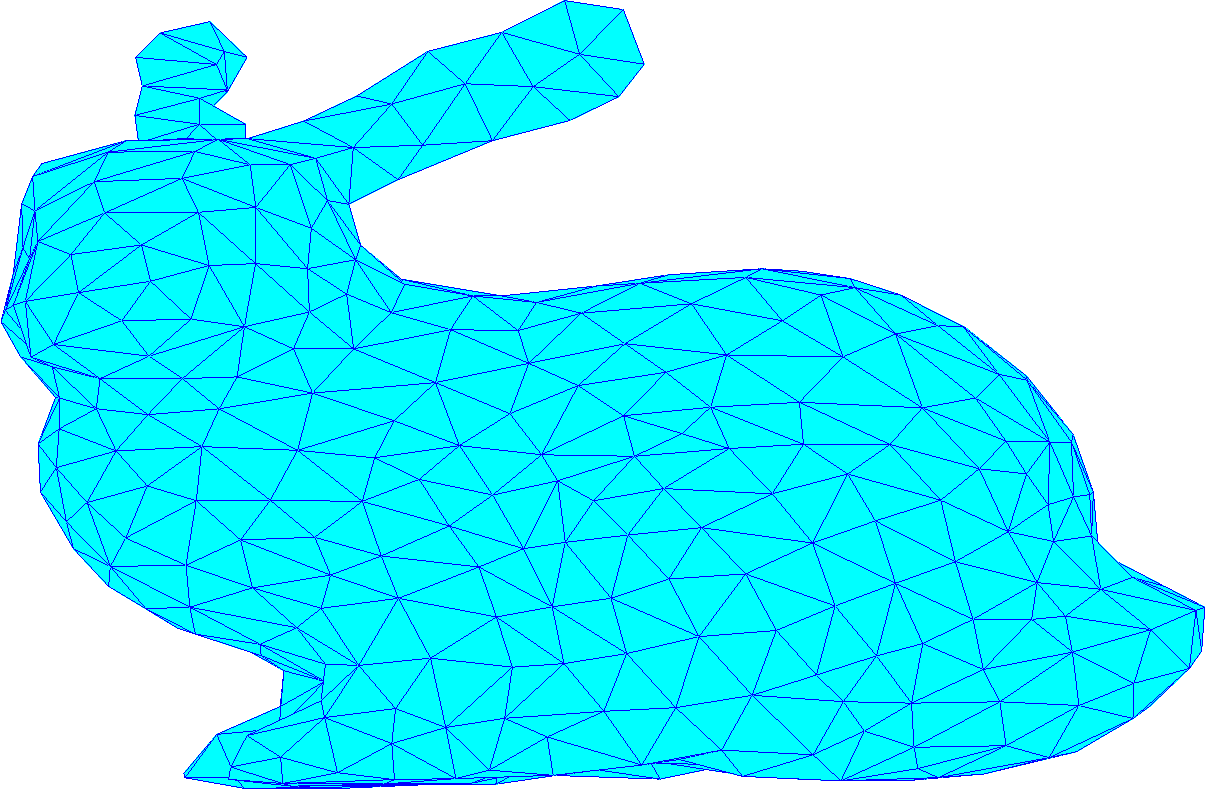}
	\subcaption{Triangulation of sparsified point cloud data for $\alpha=3.5$}
\end{subfigure}
\caption{Comparison of point cloud sparsification results using \dt{the proposed weighted $\ell_0$ regularization} with different values of $\alpha$ in \eqref{eq:partitionproblem}.} %
\label{fig:BunnyApproximation}       %
\end{figure}

\subsection{Point cloud sparsification in the presence of geometric noise}
\label{ss:noisy_data}
In contrast to the previous experiments in which we assumed the given point cloud data to be unperturbed, we focus in the following on data that is prone to geometric noise. In particular, we aim to study the behaviour of the proposed minimization scheme in Algorithm \ref{alg:proposed_scheme} when the given data is perturbed, which occurs in real world applications for cheap scanning hardware or far distances to the object-of-interest. We added a small noise perturbation to every point of the original point cloud following a Gaussian random distribution with mean $\mu=0$ and standard deviation $\sigma=0.003$.

In the left column of Figure \ref{fig:NoisyBunny} we show different point clouds for the \textit{Bunny} data set in a front view, while in the right column we changed the view angle by $90$ degrees to gain a side view of the model. In Figure \ref{fig:noisy_bunny_a}-\subref{fig:noisy_bunny_b} we illustrate the noisy point cloud to be sparsified. The data appears very fuzzy and there are many outliers, which make the task of point cloud sparsification very challenging. In Figure \ref{fig:noisy_bunny_c}-\subref{fig:noisy_bunny_d} one can observe the result of $3$ iterations of the octree approximation scheme discussed in Section \ref{ss:results_octree} above. As can be observed the resulting point cloud is sparse, but yet contains many noise artifacts and outliers, which makes it difficult to recognize the original surface of the model. In Figure \ref{fig:noisy_bunny_e}-\subref{fig:noisy_bunny_f} we demonstrate the result of the proposed minimization scheme in Algorithm \ref{alg:proposed_schemel0} for the weighted $\ell_0$ regularization and using isotropic cuts with a regularization parameter of $\alpha = 3$. As can be seen the distribution of points in the resulting point cloud is relatively sparse compared to the original data. Furthermore, the distribution of points appears much more uniform as compared to the octree approximation scheme in the previous experiment. Still, the impact of noise leads to perturbation artifacts and outliers when the minimization of the reduced problem \eqref{eq:reducedproblem} is skipped. This is not surprising as the reduced problem in the proposed minimization scheme is responsible for denoising the intermediate results of the partitioning scheme. Finally, we present the results of using weighted $\ell_0$ regularization for solving the partition problem and \dt{isotropic} $\ell_2$ regularization for the reduced problem in Figure \ref{fig:noisy_bunny_g}-\subref{fig:noisy_bunny_h}. We use the parameter settings $p=q=2, \alpha=3$ and the regularization parameter $\beta=40$. As can be observed the resulting point cloud is sparse and uniform, while the impact of noise is effectively suppressed. The shape of the original \textit{Bunny} model is well-reconstructed from the noisy input data.
\dt{This shows that there exists data for which it makes sense not to only use the proposed weighted $\ell_0$ regularization, but to incorporate a-priori knowledge about the expected solution in terms of the right regularization term.}
Note that we are able to denoise the raw point cloud data without the need of a mesh triangulation, which makes this approach usable in a wider range of applications.
\begin{figure}
\begin{subfigure}[c]{0.47\textwidth}
	\includegraphics[height=4cm]{./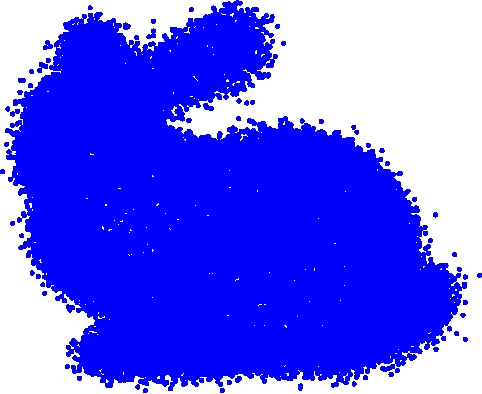}
	\subcaption{Noisy point cloud of \textit{Bunny} model (front view)}
	\label{fig:noisy_bunny_a}
\end{subfigure} \hfill
\begin{subfigure}[c]{0.47\textwidth}
	\includegraphics[height=4cm]{./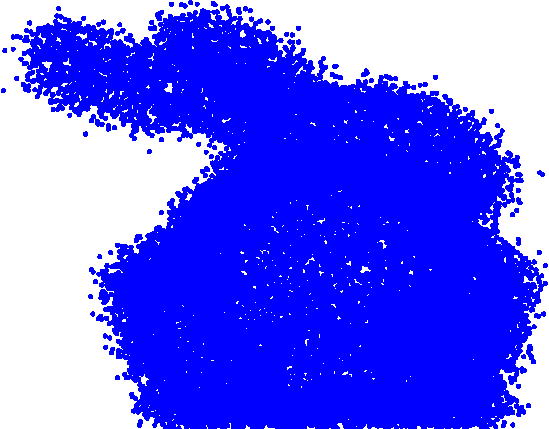}
	\subcaption{Noisy point cloud of \textit{Bunny} model (side view)}
	\label{fig:noisy_bunny_b}
\end{subfigure}\\[0.4cm]
\begin{subfigure}[c]{0.47\textwidth}
	\includegraphics[height=4cm]{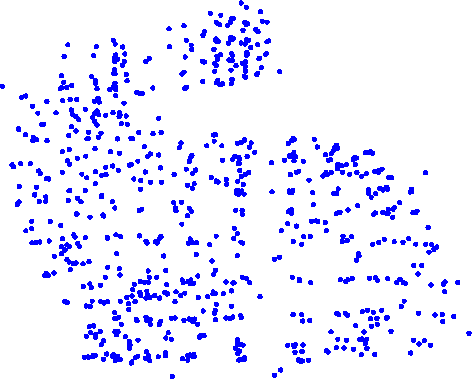}
	\subcaption{Point cloud using octree approximation (front view)}
	\label{fig:noisy_bunny_c}
\end{subfigure}\hfill
\begin{subfigure}[c]{0.47\textwidth}
	\includegraphics[height=4cm]{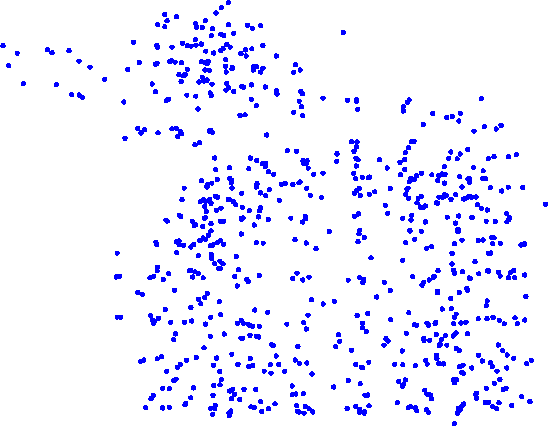}
	\subcaption{Point cloud using octree approximation (side view)}
	\label{fig:noisy_bunny_d}
\end{subfigure}\\[0.4cm]
\begin{subfigure}[c]{0.47\textwidth}
	\includegraphics[height=4cm]{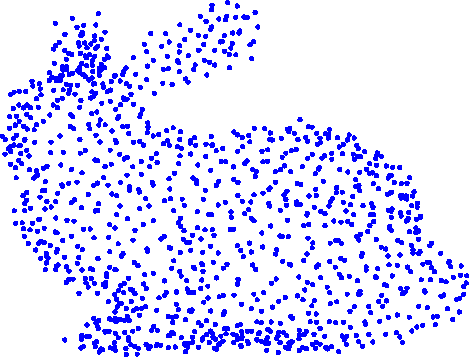}
	\subcaption{Point cloud using weighted $\ell_0$ \dt{regularization} (front view)}
	\label{fig:noisy_bunny_e}
\end{subfigure} \hfill
\begin{subfigure}[c]{0.47\textwidth}
	\includegraphics[height=4cm]{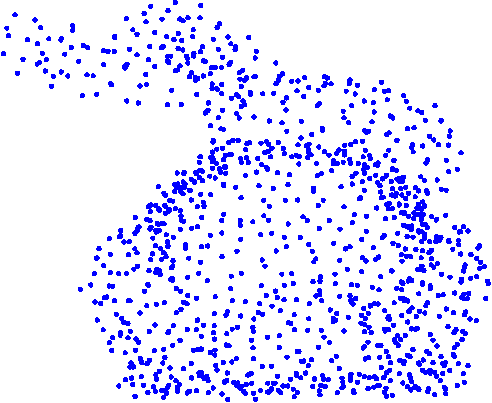}
	\subcaption{Point cloud using weighted $\ell_0$ \dt{regularization} (side view)}
	\label{fig:noisy_bunny_f}
\end{subfigure}\\[0.4cm]
\begin{subfigure}[c]{0.47\textwidth}
	\includegraphics[height=4cm]{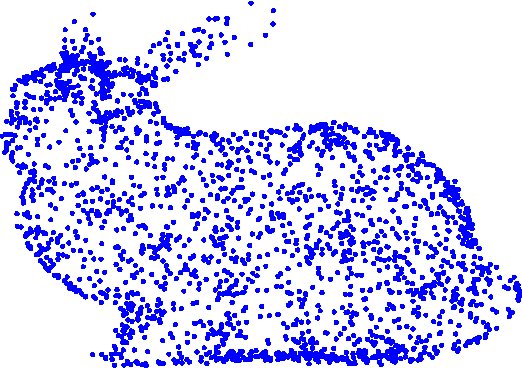}
	\subcaption{Point cloud using \dt{isotropic $\ell_2$ regularization} (front view)}
	\label{fig:noisy_bunny_g}
\end{subfigure} \hfill
\begin{subfigure}[c]{0.47\textwidth}
	\includegraphics[height=4cm]{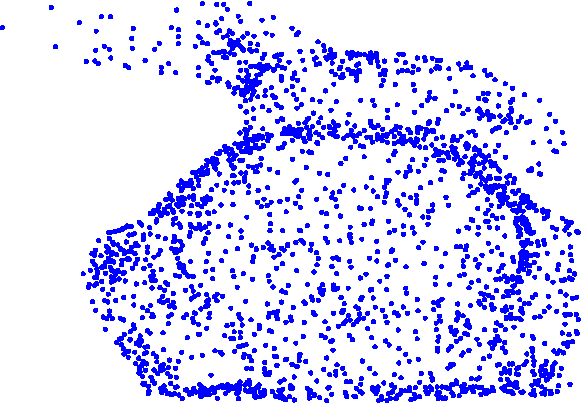}
	\subcaption{Point cloud using \dt{isotropic $\ell_2$ regularization} (side view)}
	\label{fig:noisy_bunny_h}
\end{subfigure}
\caption{Comparison of different point cloud sparsification methods for a noisy point cloud of the \textit{Bunny} data set.}
\label{fig:NoisyBunny} 
\end{figure}

\subsection{Debiasing}
\label{ss:debiasing}
One observation we made during our numerical experiments is that there is a loss of volume in the resulting sparse 3D point clouds when compared to the original point cloud \dt{in particular when using the anisotropic} $\ell_1$ regularization. This loss of volume is directly influenced by the choice of the regularization parameter $\beta$ in the reduced problem \eqref{eq:reducedproblem} of the proposed minimization scheme. In particular, the higher we choose the regularization parameter $\beta$ the more the resulting sparse point cloud shrinks. This effect is well-known in the image processing community as 'loss of contrast' or 'bias' and is typically associated with the application of total variation regularization. 

In order to overcome this problem we propose to perform a debiasing step as post-processing once the proposed minimization scheme in Algorithm \ref{alg:proposed_scheme} is converged to a minimizer. Note that the reduction of bias in variational regularization is a challenging task as can be seen in \cite{debias}. However, in our setting a debiasing step can be performed rather simple as we can adjust the value of whole vertex subsets $A_i \subset V$ by adjusting the optimal piece-wise constant functions on these subsets with respect to the original (possibly noisy) data. It turns out that the optimal piece-wise constant approximation on each subset is the mean value of the data being assigned to this subset by the partition $\Pi$. The debiasing step can easily be implemented by performing one final denoising step in \eqref{eq:reducedproblem} and setting the regularization parameter $\beta=0$ as proposed in \cite{debias}. In this case the minimizer $f_\Pi$ is adjusted according to the original data and thus correcting for the loss-of-volume effect. In our case this is a very cheap operation in terms of computational effort as only the mean value of the $k$ subsets $A_i \subset V$ induced by the partition $\Pi$ have to be computed.

In Figure \ref{fig:NoisyCube} we demonstrate the effect of the proposed debiasing step on a two-dimensional noisy point cloud. In Figure \ref{fig:debiasing_c} one can see the result of point cloud sparsification with the proposed minimization scheme for $p=q=1$ and a regularization parameter of $\beta = 10$. As can be seen the noise is effectively suppressed in the sparse point cloud. However, due to the strong regularization there is a significant loss-of-volume compared to the original data in Figure \ref{fig:debiasing_a}. After performing a subsequent debiasing step as discussed above one can observe the improved result in Figure \ref{fig:debiasing_d} in which the original dimensions are restored.
\begin{figure}
\begin{subfigure}[c]{0.47\textwidth}
	\includegraphics[height=4cm]{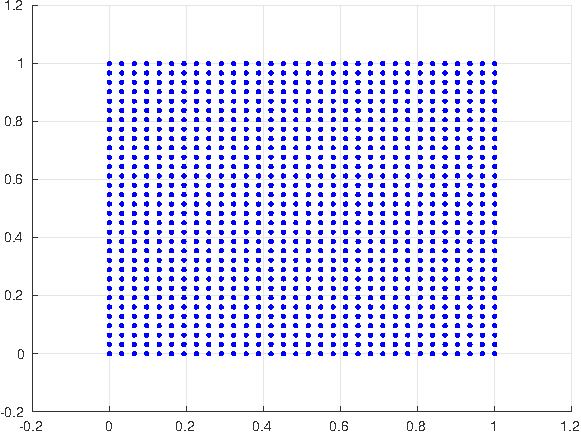}
	\subcaption{Unperturbed point cloud of 3D cube (front view)}
	\label{fig:debiasing_a}
\end{subfigure} \hfill
\begin{subfigure}[c]{0.47\textwidth}
	\includegraphics[height=4cm]{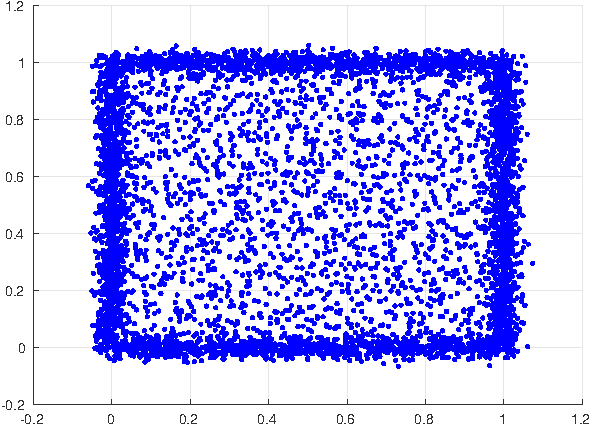}
	\subcaption{Noisy point cloud of 3D cube with $\sigma = 0.02$ (front view)}
\end{subfigure}\\[0.4cm]
\begin{subfigure}[c]{0.47\textwidth}
	\includegraphics[height=4cm]{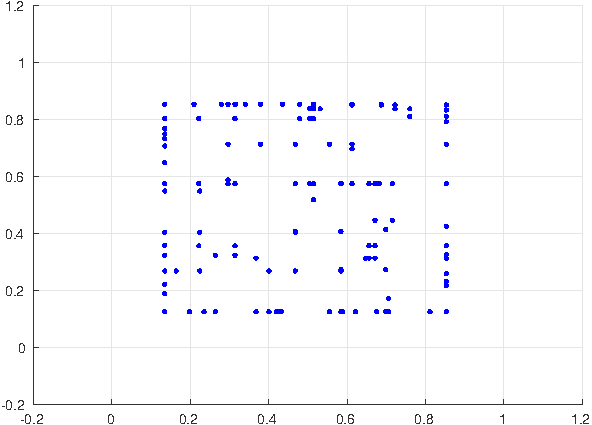}
	\subcaption{Result of point cloud sparsification without debiasing (front view)}
	\label{fig:debiasing_c}
\end{subfigure}\hfill
\begin{subfigure}[c]{0.47\textwidth}
	\includegraphics[height=4cm]{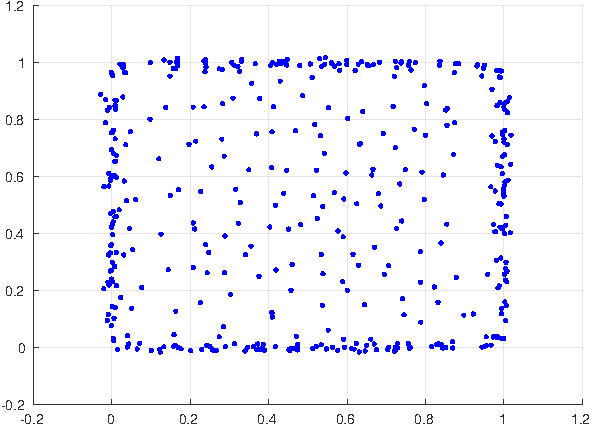}
	\subcaption{Result of point cloud sparsification with debiasing (front view)}
	\label{fig:debiasing_d}
\end{subfigure}\\[0.4cm]
\caption{Visualization of the impact of a subsequent debiasing step on the results of point cloud sparsification. As can be observed the original volume is restored by this post-processing step.}
\label{fig:NoisyCube}
\end{figure}

\section{Discussion}
\label{s:discussion}
In this paper we have proposed an efficient minimization strategy on finite weighted graphs for the task of point cloud sparsification, which is inspired by the recently proposed Cut Pursuit algorithm. We compared the numerical results of the proposed coarse-to-fine scheme to a fine-to-coarse strategy that has already been used for this application in the literature. As could be observed our method does not only preserve details of the underlying surface topology much better when using \dt{the proposed} weighted $\ell_0$ regularization, but also has a significantly lower computational effort. This renders variational methods for point cloud compression to be a real alternative to traditional methods, such as random sampling or octree compression.

As we discussed in this work, by deviating from the proposed Cut Pursuit scheme we gain additional flexibility for choosing different regularization functionals and hence controlling the appearance of the resulting sparse point clouds. On the other hand, we are currently not able to give strict convergence proofs for this method as we decoupled both minimization problems in the alternating scheme. Although, we expect the difference between a minimizer of the original variational problem and the approximation computed by our scheme to be relatively small, we aim to further analyze this discrepancy in future works. %

\dt{One aim for future work is to derive a quantitative measure for comparing a given 3D point cloud with the presented results of point cloud sparsification. This makes it possible to compare the effectiveness of preserving important geometrical features between the proposed regularization techniques.
So far we have tried two different schemes to get a quantitative measure of how far a compressed point cloud deviates from the original point cloud. The first idea consists of having a surface representation of the 3D point cloud based on level set functions. Measuring the distance between two level set functions is easy to perform, in particular if these are given by signed distance functions. However, we encountered problems for sparse point clouds in which the points were relatively far away from each other. If one does not impose a strong regularity on the level set segmentation method employed there may appear holes in the level set surface and the topology of the data is distorted. On the other hand, if one chooses a high regularization parameter for the level set method all sharp geometrical features can get lost. It turns out to be difficult to use level set methods to quantitatively measure the distance between two point clouds as one would have to optimize the regularity parameter by many trials for each pair of given point clouds. 
The second idea consists of measuring the Hausdorff distance between two mesh triangulations representing the surface sampled by the point cloud. Although there exist methods to measure this distance quantitatively between two meshes, we encountered similar problems as in the case of level set methods. When dealing with sparse point clouds many triangulation methods yield mesh representations with irregular triangle approximations or even holes and thus induce mistakes in the computation of the Hausdorff distance between the surfaces.
We currently work on an alternative way of measuring the distance between two point clouds directly using registration approaches, e.g., the iterative closest point (ICP) algorithm.
This will enable a comparison with state-of-the-art methods for point cloud sparsification, e.g. the optimal transport based scheme in \cite{OptimalTransportPointCloud}.

Furthermore, we} plan to analyze the effect of the graph construction and the choice of the weight function $w$ on the results of point cloud sparsification and plan to incorporate nonlocal relationships within the 3D point cloud data.
So far we did not \dt{exploit} any surface normal information, which could easily be estimated from performing a local principal component analysis on the point cloud. Using these normal information could help in reconstructing sparse point clouds without the loss-of-volume effect described during our numerical experiments. It also might \dt{further} improve the preservation of sharp features such as edges and corners.

\section*{Acknowledgment}
 This work was supported by the Bundesministerium f\"ur Bildung und Forschung under the project id 05M16PMB (MED4D) and the European Union’s Horizon 2020 research and innovation programme under the Marie Skłodowska-Curie grant agreement No. 777826 (NoMADS). 
 
The authors would like to thank Dana Sarah Hetland for realizing the illustrations in Figure \ref{fig:partition} and Figure \ref{fig:reduced_graph}, Julian Rasch for fruitful discussions on debiasing methods and primal-dual optimization, and Loic Landrieu for his inspiring talk during the NoMADS workshop at Politecnico di Milano in June 2018 and interesting discussions on weighted $\ell_0$ regularization.
\nocite{*}
\printbibliography

\appendix

\section{Appendix: The graph $p$-$q$-Laplace operator}\label{app:pqlap}
In this section we derive the weighted graph $p$-$q$-Laplace operator defined as in \eqref{eq:pq-laplace}. All computations done below are based on the definition of the divergence as the adjoint \eqref{eq:adjoint} of the gradient operator $\nabla_w$ in \eqref{eq:gradient}. Additionally, it is assumed that we have an undirected graph with $w(u,v) = w(v,u)$, and thus $\partial_uf = - \partial_vf$ as described in Section \ref{s:graphs}.
\begin{align*}
\Delta_{w,p;q} f(u) &=\frac{1}{2}\operatorname{div}_w\Big(\big\Vert \nabla_w f\big\Vert_q^{p-q} \left(\nabla_w f_j \big\vert \nabla_wf_j\big\vert^{q-2} \right)_{j=1}^d\Big)\\
&\stackrel{\eqref{eq:adjoint}}{=} -\frac{1}{2} \sum_{v\sim u} \sqrt{w(u,v)} \big\Vert \nabla_wf(u,v)\big\Vert_q^{p-q} \\
&\hspace{1cm}\Bigg(\nabla_w f(v,u)_j \big\vert \nabla_wf(v,u)_j\big\vert^{q-2} - \nabla_w f(u,v)_j \big\vert \nabla_wf(u,v)_j\big\vert^{q-2} \Bigg)_{j=1}^d\\
&= \sum_{v\sim u} \sqrt{w(u,v)} \big\Vert \nabla_wf(u,v)\big\Vert_q^{p-q} \Bigg(\nabla_w f(u,v)_j \big\vert \nabla_wf(u,v)_j\big\vert^{q-2} \Bigg)_{j=1}^d\\
&=\sum_{v\sim u} \sqrt{w(u,v)} \big\Vert w(u,v)^\frac{1}{2}(f(v)-f(u))\big\Vert_q^{p-q} \\
&\hspace{1cm}\Bigg(w(u,v)^\frac{1}{2} (f(v)_j -f(u)_j) \big\vert w(u,v)^\frac{1}{2} (f(v)_j -f(u)_j) \big\vert^{q-2} \Bigg)_{j=1}^d\\
&=\sum_{v\sim u} w(u,v)^\frac{p}{2} \big\Vert f(v)-f(u)\big\Vert_q^{p-q} \Bigg( (f(v)_j -f(u)_j) \big\vert  f(v)_j -f(u)_j \big\vert^{q-2} \Bigg)_{j=1}^d
\end{align*}

\fg{
\section{Appendix: Cut Pursuit derivation for $p-q$-norm}\label{app:deriviationCutPursuit}
In this section we want to derive an approach to compute the solution of the variational problem 
\begin{align}\label{eq:MinimizationProblemLNull}
    \argmin_{f\in \Hil(V)} \Big\{ J(f) = D(f,g) + \frac{\alpha}{2p} \left\Vert\nabla_w f \right\Vert_{p;q}^p\Big\}
\end{align}
by using a binary splitting algorithm. We will denote $R(f) = \frac{1}{p}\left\Vert\nabla_w f \right\Vert_{p;q}^p$. As we have discussed before the solutions of a minimization problem \eqref{eq:MinimizationProblemLNull} with a total variation regularizer consists of piecewise constant parts. 
Thus, an algorithm which starts with a very coarse partitioning and refines on the run until it has the same partition as the solution of \eqref{eq:MinimizationProblemLNull} is very practical. \par
To do so, we start by saying that we have a current partition $\Pi$ of $V$ with sets $A_i, A_j\in \Pi$ where $A_i \cap A_j = \emptyset$ and $V = \bigcup_{A_i \in \Pi} A_i$. We aim to find a set $B \subset V$ to refine $\Pi$ by computing $A\cap B$ and $A\cap B^c$ for every $A\in \Pi$. 
Let $c = (c_A)_{A\in \Pi}\in \mathcal{H}(\Pi) \cong \mathbb{R}^{\vert\Pi\vert \times d}$ be a vertex function with a constant vector $c_A \in \mathbb{R}^d$ for every partition $A\in \Pi$. Additionally, we will use the vertex function $$1_A = \begin{cases}
    1, \text{if } u\in A\\
    0, \text{else}
\end{cases} $$ which can be interpreted as a $N\times 1$ vector. Since, $c_A$ can be written as $1\times d$ vector we can compute the matrix $1_A c_A \in \mathbb{R}^{N\times d}$ for every $A\in \Pi$. 
Then we can find a vertex function $f\in \Hil(V)$ given by $$f = \sum_{A\in\Pi} 1_A c_A$$ being piecewise constant on the sets $A\in \Pi$. 
Hence, we can solve a minimization problem to find the best constants for a given partition $\Pi$ by solving 
\begin{align}\label{eq:reducedProblem}
    \argmin_{c\in \Hil(\Pi)} \ J(f) =  J\big(\sum_{A\in\Pi} 1_A c_A\big)
\end{align}

Let $c\in \Hil(\Pi)$ be a solution to \eqref{eq:reducedProblem} and $f = \sum_{A\in\Pi} 1_A c_A$ be the corresponding function on $V$. 
Assume now we have some binary partition of the set $V$ described by $B\in V$ and a corresponding piecewise constant function
$$\tilde{f}_B = \sum_{A\in\Pi} \left( 1_{A\cap B} d_{A\cap B} + 1_{A\cap B^c} d_{A\cap B^c} \right)$$
with some vectors $d_{A\cap B}, d_{A\cap B^c}\in \mathbb{R}^d$ for any combination of $A\in \Pi$ and $B\subset V$. Note that this function is piecewise constant on the \emph{new} sets $A\cap B \subset A$ and $ A\cap B^c \subset A$ for every $A\in \Pi$. The energy functional then becomes 
\begin{align}
    \begin{split}
        J(\tilde{f}_B) &= D\Big(\sum_{A\in\Pi} \left( 1_{A\cap B} d_{A\cap B} + 1_{A\cap B^c} d_{A\cap B^c} \right), g\Big) \\ & \phantom{=} + \frac{\alpha}{2}R \Big(\sum_{A\in\Pi} \left( 1_{A\cap B} d_{A\cap B} + 1_{A\cap B^c} d_{A\cap B^c} \right)\Big).  
    \end{split}
\end{align}

We now want to find a partitioning described by $B\subset V$ such that the energy of $J(\tilde{f}_B)$ decreases the most compared to the current energy $J(f)$. Thus, we optimize to find the set $B\subset V$ that minimizes 
\begin{align}\label{eq:energydiff}
    \begin{split}
        \argmin_{B\subset V} \ J(\tilde{f}_B) - J(f)\\
        \argmin_{B\subset V} \ D(\tilde{f}_B,g) - D(f,g) + \frac{\alpha}{2} \ \left( R( \tilde{f}_B) - R(f) \right)
    \end{split}
\end{align}
with fixed vectors for $d_{A\cap B}$ and $d_{A\cap B^c}$.\par
We will divide this in two steps and investigate the left-hand difference of \eqref{eq:energydiff} first. For simplicity we will set the values $d_{A\cap B} = c_A + \gamma_B^A$ and $d_{A \cap B^c} = c_A - \gamma_{B^c}^A$ with $\gamma_B^A, \gamma_{B^c}^A\in \mathbb{R}^d_+$ for every combination of $A\in \Pi$ and $B$. Then we can compute the difference - which we will denote as $\mathcal{D}$ from now on - as follows 
\begin{align}\label{eq:defDifference}
    \begin{split}
        \mathcal{D}(\tilde{f}_B, f, g) &=  D(\tilde{f}_B,g) - D(f,g)\\
        &= D(\sum_{A\in\Pi} \left( 1_{A\cap B} d_{A\cap B} + 1_{A\cap B^c} d_{A\cap B^c} \right), g) - D(\sum_{A\in\Pi} 1_A c_A,g)\\
        &= D(\sum_{A\in\Pi} \left(1_A c_A + 1_B \gamma_B - 1_{B^c} \gamma_{B^c}\right),g) - D(\sum_{A\in\Pi} 1_A c_A,g).
    \end{split}
\end{align}

The difference $\mathcal{D}$ of \eqref{eq:defDifference} can be approximated with Taylor expansion of $D$ since $D$ is assumed to be differentiable. Note that $\nabla D(f)\in \mathbb{R}^{N\times d}$ is a $N\times d$ matrix describing the derivative of $D$ for function $f$. For simplicity we will write $D(f)$ for $D(f,g)$ in the following. Therefore, we will evaluate at point $f$ and get an approximation of $D$ at some point $x\in \mathcal{H}(V)$ as 
\begin{align}
    D(x) \approx D(f) + \langle\nabla D(f), x - f\rangle.
\end{align}
Reformulating this and approximating $D$ for point $\tilde{f}_B$ we get the following evaluation 
\begin{align}
    D(\tilde{f}_B) - D(f) &\approx \langle\nabla D(f), \tilde{f}_B - f\rangle. 
\end{align}
We then obtain the approximation 
\begin{align} \label{eq:approxDataTermDiff}
D(\tilde{f}_B,g) - D(f) &\approx  \langle \nabla D(f), \sum_{A\in \Pi}1_{A\cap B} \gamma_B^A - 1_{A\cap B^c} \gamma_{B^c}^A \rangle.
\end{align}
using $$ \tilde{f}_B - f = \sum_{A\in \Pi}1_{A\cap B} \gamma_B^A - 1_{A\cap B^c} \gamma_{B^c}^A. $$

Note again the equivalence of 
 \begin{align}\label{eq:B_relation}
     1_V = 1_B + 1_{B^c} &\Leftrightarrow 1_{B^c} = 1_V - 1_B, \\
     1_A = 1_{A\cap B} + A_{A\cap B^c} & \Leftrightarrow 1_{A\cap B^c} = 1_A - 1_{A\cap B}.
 \end{align}
 Thus, we can rewrite the approximation again as 
 \begin{align}\label{eq:approxDataTerm}
    D(\tilde{f}_B) - D(f) &\approx  \langle \nabla D(f), \sum_{A\in \Pi} 1_{A\cap B} (\gamma_B^A + \gamma_{B^c}^A)\rangle + \kappa_D
 \end{align}
where $\kappa_D =  - \langle \nabla D(f), \sum_{A\in \Pi} 1_A  \gamma_{B^c}^A\rangle$ and finish this computation. From now on we will denote 
$$\vec{\gamma} = \sum_{A\in \Pi} 1_{A\cap B} (\gamma_B^A + \gamma_{B^c}^A).$$\par  

Let us now focus on the right-hand side of \eqref{eq:energydiff} that is given as 
\begin{align}\label{eq:tvdiff}
    \begin{split}
        R(\tilde{f}_B) - R(f) &= \frac{1}{p}\sum_{(u,v) \in E} w(u,v)^\frac{p}{2}\big(\Vert \tilde{f}_B(v) - \tilde{f}_B(u) \Vert_{p;q}^p - \Vert f(v) - f(u)\Vert_{q;p}^p\big) \\
        & = \frac{1}{p}\sum_{(u,v) \in E} w(u,v)^\frac{p}{2} d(u,v)
    \end{split}
\end{align}
with  $d(u,v) = \Vert \tilde{f}_B(v) - \tilde{f}_B(u)\Vert_{p;q}^p - \Vert f(v) - f (u)\Vert_{p;q}^p$. To simplify we will denote $\Vert \cdot \Vert_{p;q}^p$ just as $\Vert \cdot \Vert$.
In this case we have to investigate different edge types that can occur due to the splitting with $B$. \par

First we will study the case for edges $(u,v) \in S^c(f)$ where $R$ is non-differentiable. This is the case if $u, v\in A\in \Pi$. Then the value of $d$ can be computed as 
\begin{align*}
    d(u,v) &= \big\Vert c_A + 1_B(v)\gamma_B^A - 1_{B^c}(v) \gamma_{B^c}^A - \big(c_A + 1_B(u)\gamma_B^A - 1_{B^c}(u) \gamma_{B^c}^A\big)  \big\Vert\\
    & = \big\Vert \big(1_B(v)- 1_B(u)\big)\gamma_B^A + \big(1_{B^c}(u)- 1_{B^c}(v)\big) \gamma_{B^c}^A \big\Vert.
\end{align*} 
It is easy to see that for $u,v \in B$ or $u,v\in B^c$ we get $d(u,v) = 0$. For $u\in B, v\in B^c$ or $v\in B, u\in B^c$ respectively we get 
$$ d(u,v) = \big\Vert \gamma_B^A + \gamma_{B^c}^A \big\Vert.$$
Thus, the sum over these edges is given as 
\begin{align}\label{eq:}
    \begin{split}
        \frac{1}{2p}\sum_{(u,v) \in S^c} w(u,v)^\frac{p}{2} d(u,v) &= \frac{1}{2p} \sum_{A\in \Pi}\sum_{\substack{(u,v)\in E(A,A)\\ u\in B, v\in B^c}} w(u,v)^\frac{p}{2} \big\Vert \gamma_B^A + \gamma_{B^c}^A\big\Vert\\
        &= \frac{2}{2p} \sum_{A\in\Pi} w(A\cap B, A\cap B^c)\big\Vert \gamma_B^A + \gamma_{B^c}^A\big\Vert
    \end{split}
\end{align}

\par 
Now we investigate the edges $(u,v) \in S$ of the differentiable part of $R$. This is equivalent with $(u,v)\in E(A_i, A_j)$ for some $A_i, A_j \in \Pi$ with $i\neq j$ and $(A_i, A_j) \in E_r$. We again want to evaluate the value of $d$. This can be done as by first looking at
\begin{align}
    \begin{split}
        \big\Vert \tilde{f}_B(v) - \tilde{f}_B(u) \big\Vert &= \big\Vert c_{A_j} + 1_B(v)\gamma_B^{A_j} - 1_{B^c}(v) \gamma_{B^c}^{A_j} - \big(c_{A_i} + 1_B(u)\gamma_B^{A_i} - 1_{B^c}(u) \gamma_{B^c}^{A_i}\big)  \big\Vert.\\
    \end{split}
\end{align}
Let us write as a simplification
$$\Gamma_{j,i}(v,u) = 1_B(v)\gamma_B^{A_j} - 1_{B^c}(v) \gamma_{B^c}^{A_j} - 1_B(u)\gamma_B^{A_i} + 1_{B^c}(u) \gamma_{B^c}^{A_i}$$
and compute 
\begin{align}
    \begin{split}
        d(u,v) = \big\Vert c_{A_j} - c_{A_i} + \Gamma_{j,i}(v,u)\big\Vert - \big\Vert c_{A_j} - c_{A_i}  \big\Vert.
    \end{split}
\end{align}
As we stated before this part of the regularizer $R$ is differentiable thus we can again do a Taylor expansion and approximate 
\begin{align}
    \begin{split}
        d(u,v) \approx \langle \nabla \Vert\cdot\Vert (c_{A_j} - c_{A_i}), \Gamma_{j,i}(v,u)\rangle
    \end{split}
\end{align}

We remark that if we have $(u,v) \in E(A_i, A_j)$ there also
exists the edge $(v,u) \in E(A_i, A_j)$ since the graph is undirected and symmetric and $v\in A_j$ and $u\in A_i$. When we compute 
\begin{align}
    \begin{split}
        d(v,u) &=  \big\Vert c_{A_i} - c_{A_j} + \Gamma_{i,j}(u,v)\big\Vert - \big\Vert c_{A_i} - c_{A_j}  \big\Vert \\
        &=  \big\Vert c_{A_j} - c_{A_i} + \Gamma_{j,i}(v,u)\big\Vert - \big\Vert c_{A_j} - c_{A_i}  \big\Vert\\
        & = d(u,v). 
    \end{split}
\end{align}
Let $$\Gamma_i(u) = 1_B(u)\gamma_B^{A_i} - 1_{B^c}(u) \gamma_{B^c}^{A_i}$$ which implies that $\Gamma_{j,i}(v,u) = \Gamma_j(v) - \Gamma_i(u)$. Note that 
$$ \langle \nabla \Vert \cdot \Vert (c_{A_j} - c_{A_i}), -\Gamma_i(u)) \rangle = \langle \nabla \Vert \cdot \Vert (c_{A_i} - c_{A_j}), \Gamma_i(u)) \rangle.$$ With these properties in mind we can compute  
\begin{align}
    \begin{split}
        d(u,v) + d(v,u) &= 2d(u,v)\\
        &\approx 2 \langle \nabla \Vert\cdot\Vert (c_{A_j} - c_{A_i}), \Gamma_{j}(v) - \Gamma_{i}(u)\rangle\\
        &= 2 \langle \nabla \Vert\cdot\Vert (c_{A_j} - c_{A_i}), \Gamma_{j}(v) - \Gamma_{i}(u)\rangle \\
        &= 2 \langle \nabla \Vert\cdot\Vert (c_{A_j} - c_{A_i}), \Gamma_{j}(v)\rangle \\ & \quad+ 2 \langle \nabla \Vert\cdot\Vert (c_{A_i} - c_{A_j}), \Gamma_{i}(u) \rangle 
        \end{split}
\end{align}

Hence, we were able to split $d(u,v)+d(v,u)$ into the separate parts for $u$ and $v$. When we now collect these parts for every $\tilde{u}\in V$ we can compute the following 

\begin{align}\label{eq:GradientApproxOfR}
    \begin{split}
        &\sum_{(u,v)\in S} w(u,v)^\frac{p}{2} d(u,v) \\& \hspace{1.5cm}\approx 2 \sum_{\substack{\tilde{u}\in A \\ A\in \Pi}} \sum_{(\tilde{u}, v)\in S} w(\tilde{u},v)^\frac{p}{2} \langle \nabla \Vert \cdot \Vert (f(\tilde{u}) - f(v)), 1_B(\tilde{u})\gamma_B^{A} - 1_{B^c}\gamma_{B^c}^A(\tilde{u})\rangle 
    \end{split}
\end{align}

Note, that the gradient of the differentiable parts of $R$ is computed as $\nabla R_S(f)_u = \sum_{u\sim v \cap S} w(u,v)^\frac{p}{2} \nabla\Vert \cdot \Vert(f(u) - f(v))$. Then we can follow for \eqref{eq:GradientApproxOfR} 
\begin{align}\label{eq:TVapprox}
    \begin{split}
        \sum_{(u,v)\in S} w(u,v)^\frac{p}{2} d(u,v) &\approx \sum_{\substack{u\in A\\ A\in\Pi}} \langle \nabla R_S(f)_u, 1_B(u)\gamma_B^A - 1_{B^c}(u)\gamma_{B^c}^A \rangle \\ 
        &= 2\sum_{\substack{u\in A\\ A\in\Pi}} \langle \nabla R_S(f)_u, 1_B(u)(\gamma_B^A + \gamma_{B^c}^A) \rangle + \kappa_R \\ 
        &=  2\langle \nabla R_S(f), 1_{A\cap B}(\gamma_B^A + \gamma_{B^c}^A) \rangle + \kappa_R\\
        &=  2\langle \nabla R_S(f), \vec{\gamma} \rangle + \kappa_R
    \end{split}
\end{align}
with $\kappa_R = - 2\langle \nabla R_S(f),  \sum_{A\in\Pi} 1_A\gamma_{B^c}^A \rangle.$

Plugging \eqref{eq:approxDataTerm} and \eqref{eq:TVapprox} into \eqref{eq:energydiff} we get the approximation 
\begin{align} \label{eq:finalJDiff}
    \begin{split}
        &J(\tilde{f}_B)- J(f) \\
        & \hspace{2em}\approx \langle \nabla D(f,g) + \frac{\alpha}{p} \nabla R_S(f), \vec{\gamma}\rangle + \frac{\alpha}{p} w(A\cap B, A\cap B^c) \big\Vert \gamma_B^A + \gamma_{B^c}^A \big\Vert+ \kappa    \end{split}
\end{align}
with $\kappa = \kappa_D + \kappa_R$.\par 
As we used a Taylor expansion we can assume without loss of generality that the $\gamma_B^A, \gamma_{B^c}^A$ have all the same distance $\frac{\varepsilon}{2} > 0$ to their point $c_A$ for every $A\in \Pi$. Let us introduce 
\begin{align*}
    \bar{\gamma}_B^A = \frac{\gamma_B^A}{\Vert \gamma_B^A\Vert_2} \text{ and }  \bar{\gamma}_{B^c}^A = \frac{\gamma_{B^c}^A}{\Vert \gamma_{B^c}^A\Vert_2}
\end{align*}
such that we can write 
\begin{align*}
    \frac{\varepsilon}{2}\bar{\gamma}_{B}^A = \gamma_B^A \text{ and }\frac{\varepsilon}{2}\bar{\gamma}_{B^c}^A = \gamma_{B^c}^A.
\end{align*} 
    
Then follow that $\vec{\gamma} = \varepsilon\bar{\vec{\gamma}}$.
Now to make the approximation exact we could choose the scalars $\gamma_B = \frac{\varepsilon}{2}$ and $\gamma_{B^c} = \frac{\varepsilon}{2}$ with $\varepsilon > 0$. 
Thus, we can reformulate \eqref{eq:finalJDiff} as 
\begin{align} 
    \begin{split}
    \frac{J(\tilde{f}_B)- J(f)}{\varepsilon} &\approx \langle \nabla D(f,g) + \frac{\alpha}{p} \nabla R_S(f), \bar{\vec{\gamma}}\rangle + \frac{\alpha}{2p} w(A\cap B, A\cap B^c) \big\Vert \gamma_B^A + \gamma_{B^c}^A \big\Vert+ \kappa_\varepsilon\\ 
    \end{split}
\end{align}
with equality in the limit of $\varepsilon \rightarrow 0$. Note that $\kappa$ can be dropped for minimization since it is a constant.
With this fact and that
\begin{align}
    \argmin_{B\subset V} J(\tilde{f}_B) - J(f) = 
    \argmin_{B\subset V} \frac{J(\tilde{f}_B) - J(f)}{\varepsilon} 
\end{align}
for any $\varepsilon>0$ we can derive the final optimization problem  

\begin{align}\label{eq:finalMinimizationPoblem}
    \argmin_{B\subset V} J(\tilde{f}_B) - J(f) = \argmin_{B\subset V} \  \langle \nabla D(f,g) + \alpha \nabla R_S(f), \bar{\vec{\gamma}}\rangle + \frac{\alpha}{2p} w(A\cap B, A\cap B^c) \big\Vert \bar{\gamma}_B^A + \bar{\gamma}_{B^c}^A \big\Vert.
\end{align}

Interestingly the optimization is equivalent to minimize over the directional derivative $J'(f;\vec{\gamma})$ of $J$ at point $f$ in the direction $\vec{\gamma}$.
Thus, we indeed minimize the following 
\begin{align}\label{eq:derivedMinimizationProblem}
    \argmin_{B\in \mathcal{P}(V)} \ J'(f; \vec{\gamma})
\end{align} 
with $$ \bar{\vec{\gamma}} = \sum_{A\in \Pi} 1_{A\cap B} (\bar{\gamma}_B^A + \bar{\gamma}_{B^c}^A).$$

\section{Appendix: Minimum Partition Problem}\label{app:derivationMinimalPartition}
In this section we aim for finding minimum partitions for a given graph via a variational problem. For this special case we will use the $\ell_0$-total variation as a regularizer which is given as
\begin{align}
    \TV_0(f) = \sum_{(u,v) \in S(f)} w(u,v) = \sum_{(u,v)\in E} w(u,v)\1_{S_0}(f(v) - f(u))
\end{align}

with  
\begin{align}
    \1_{S_0}(x) = \begin{cases}
        1, & \text{if } \sum_{i=1}^d \vert x_j\vert > 0,\\
        0, & \text{else}.
    \end{cases}
\end{align}
Note, in literature this is sometimes also expressed as $\1_{S_0}(f(v) - f(u)) = \big[ f(v) \neq f(u)\big].$
The regularizer is non-differentiable and there also does not exist a directional derivative. But note that it is differentiable everywhere except in $0$. This means for an edge $(u,v)\in E$ with $f(u) \neq f(v)$ we can compute the derivative which is $0$ since it is constant $1$ everywhere but in $0$.
This minimization problem is non-convex, and thus, hard to solve and even if we find a solution it is likely to be not the global optimum. 

To solve this problem we again want to use an successive cut approach as the Cut Pursuit for $\Vert \cdot \Vert_{p;q}$. Therefore, we again want to find binary cuts to refine the set and then update the values. Let $\Pi$ be some partition and let $f = \sum_{A\in \Pi} 1_A c_A$ some function in $\mathcal{H}(V)$ that is constant on every $A\in \Pi$. Again we have some set $B\subset V$ and take some function 
$$\tilde{f}_B = \sum_{A\in\Pi}1_{A\cap B} d_{A\cap B} + 1_{A\cap B^c} d_{A\cap B^c}  $$
with vectors $d_{A\cap B} = c_A + \gamma_B^A$ and $d_{A\cap B^c} = c_A - \gamma_{B^c}^A$ for every $A\in \Pi$. We can also evaluate $\tilde{f}_B$ point-wise for $u\in A \in \Pi$ which is given as 
\begin{align*}
    \tilde{f}_B(u) = c_A + 1_B(u)\gamma_B^A - 1_{B^c}(u) \gamma_{B^c}^A.
\end{align*}
Once more we want to find a set $B \subset V$ that minimizes 
\begin{align}
    \argmin_{B\subset V} J(\tilde{f}_B) - J(f).
\end{align}

Therefore, we rewrite the difference. In the former section we already approximated it in \eqref{eq:approxDataTermDiff} as 

\begin{align}
    \mathcal{D}(\tilde{f}_B,f) = D(\tilde{f}_B,g) - D(f,g) \approx  \langle \nabla D(f,g),  \vec{\gamma} \rangle + \kappa
\end{align}
by Taylor expansion and selecting $d_{A\cap B} = c_A + \gamma_B$ and $d_{A\cap B^c} = c_A - \gamma_{B^c}$. 

Hence, we investigate the difference of the $\TV_0$ regularizer given as 
\begin{align}
    \TV_0(\tilde{f}_B) - \TV_0(f) = \sum_{(u,v) \in E} w(u,v) d(u,v)
\end{align}
with $d(u,v) = \1_{S_0} \big(\tilde{f}_B(v) - \tilde{f}_B(u)\big) - \1_{S_0} \big(f(v) -  f(u)\big)$.
\par First we concentrate on one particular partition $A\in\Pi$ and every edge $(u,v) \in E$ with $u,v\in A$. For these we have that $f(u) = f(v) = c_A$, and thus, they live in $S_0^c(f)$ and $\TV_0$ is non-differentiable at these edges. Here we get 
\begin{align*}
    d(u,v) &= \1_{S_0}\big(c_A + 1_B(v)\gamma_B^A - 1_{B^c}(v)\gamma_{B^c}^A - (c_A + 1_B(u)\gamma_B^A - 1_{B^c}(u) \gamma_{B^c}^A\big) - \1_{S_0}\big(c_A - c_A\big)\\
    &=\1_{S_0}\big( (1_B(v) -1_B(u))\gamma_B^A +  (1_{B^c}(u) - 1_{B^c}(v))\gamma_{B^c}^A\big)
\end{align*} 
We see that if $u,v \in B$ or $u,v\in B^c$ then $d(u,v) = 0$. Else if $u\in B$ and $v\in B^c$ or vice versa, then $d(u,v) = 1$. Thus, we can write 
\begin{align}
    \sum_{A\in \Pi}\sum_{(u,v) \in E(A,A)} w(u,v) d(u,v) &=  \sum_{(u,v) \in S^c} w(u,v) \vert 1_B(u) - 1_B(v)\vert\\
     &= 2w(B,B^c).
\end{align}
\par 
The second part is now to examine the leftover edges $(u,v) \in S(f)$ which is the set of edges between sets $A_i, A_j\in \Pi$. Let us consider some sets $A_i, A_j\in \Pi$ with $(A_i, A_j) \in E_r$ and fix some edges $(u,v)\in S$ with $u\in A_i$ and $v\in A_j$. Let us also denote
$$\vec{d_A}(u) = 1_B(u)\gamma_B^A - 1_{B^c}(u)\gamma_{B^c}^A$$ as a simplification. Then we get 
\begin{align*}
    d(u,v) = \1_{S_0}\big( c_{A_j} + \vec{d_{A_j}} - c_{A_i} - \vec{d_{A_i}} \big) - \1_{S_0}\big( c_{A_j} - c_{A_i} \big)
\end{align*}
which can again be approximated by Taylor expansion, since - as mentioned before - $\1_{S_0}$ is differentiable over these edges. But as we know $\nabla \1_{S_0} = 0$, consequently the approximation then yields 
\begin{align}
    d(u,v) \approx \langle \nabla \1_{S_0}(c_{A_j} - c_{A_i}), \vec{d_{A_j}} - \vec{d_{A_i}}\rangle = 0. 
\end{align}
Then we can directly deduce by the two cases that
\begin{align}
    \begin{split}
        \TV_0(\tilde{f}_B) - \TV_0(f) &=  2 w(B,B^c).
    \end{split}
\end{align}

Finally, with the same argumentation as in the former section we can deduce that the optimization problem we want to solve is 
\begin{align}\label{eq:appendixMinPartition}
    \argmin_{B\subset V} \ \langle \nabla D(f,g), \vec{\gamma}\rangle + \alpha w(B,B^c). 
\end{align}
}

\section{Appendix: Cut Pursuit}\label{app:cut}
To determine the derivative of the regularizer $R$ where it is differentiable for $q\geq p\geq 1$ we can calculate the derivative component-wise for each combination $u\in V$ and $j\in 1, \ldots, d$. Note that we have to distinguish between the cases for $q=p=1$ and $q\geq p \geq 1$ with $q>1$ due to the different differentiability properties.

Starting with $q=p=1$ we get
\begin{align*}
\begin{split}
\frac{\partial }{\partial f(u)_j}R_S(f) &= \frac{\partial }{\partial f(u)_j}\frac{1}{2} \sum_{\hat{u}\in V} \sum_{((\hat{u},v),\hat{\jmath})\in S} \sqrt{w(\hat{u},v)} \left|f(v)_{\hat{\jmath}}-f(u)_{\hat{\jmath}} \right|
\end{split}
\end{align*} 
First notice that we can drop all terms in $R$ where $u$ and $j$ are not contained. And since we work on undirected graphs $(u,v)\in S$ iff $(v,u)\in S$ and $w(u,v) = w(v,u)$. Due to the $q$-norm we thus have for each $(u,v)$ and $(v,u)$ the same term, such that we can add them up and sum up over all $(u,v)\in S$. This boils down to
\begin{align*}
\begin{split}
\frac{\partial }{\partial f(u)_j}R_S(f) &= \frac{\partial }{\partial f(u)_j}\frac{2}{2} \sum_{((u,v),j)\in S} \sqrt{w({u},v)} \left|f(v)_j-f(u)_j \right|\\
&=\sum_{ ((u,v),j) \in S} \sqrt{w(u,v)} \ \sign\big(f(u)_j-f(v)_j\big)
\end{split}
\end{align*} 

Now we consider the case $q\geq p\geq 1$ with $q>1$. 
\begin{align}
\begin{split}
\frac{\partial }{\partial f(u)_j}R_S(f) &= \frac{\partial }{\partial f(u)_j}\frac{1}{2p} \sum_{\hat{u}\in V} \sum_{(\hat{u},v)\in S} w(\hat{u},v)^\frac{p}{2} \left\Vert f(v)-f(\hat{u}) \right\Vert_q^p.
\end{split}
\end{align} 
 With the same ideas and properties from above we get to the simplified equation

\begin{align}
\begin{split}
\frac{\partial }{\partial f(u)_j}R_S(f) &= \frac{2}{2p}\frac{\partial }{\partial f(u)_j} \sum_{(u,v)\in S} w(u,v)^\frac{p}{2} \left\Vert f(v)-f(u) \right\Vert_q^p\\
&= \frac{1}{p}\frac{\partial }{\partial f(u)_j} \sum_{(u,v)\in S} w(u,v)^\frac{p}{2} \left\Vert f(v)-f(u) \right\Vert_q^p.
\end{split}
\end{align}
Notice that the derivative of the $q$-norm is calculated as
\begin{align}
\begin{split}
	\frac{\partial}{\partial f(u)_j} \left\Vert f(v)-f(u) \right\Vert_q &= \left(\frac{|f(v)_j - f(u)_j|}{\Vert f(v)-f(u)\Vert_q}\right)^{q-1} \frac{f(u)_j-f(v)_j}{\vert f(v)_j -f(u)_j \vert}.
\end{split}
\end{align}
By computing the inner and outer derivatives and use the derivative of the $q$-norm we can conclude
\begin{align}\label{eq:appderivative}
\begin{split}
\frac{\partial }{\partial f(u)_j} R_S(f) & =  \frac{p}{p}\sum_{(u,v)\in S} w(u,v)^\frac{p}{2}  \left\Vert f(v)-f(u) \right\Vert_q^{p-1} \frac{\partial }{\partial f(u)_j} \left\Vert f(v)-f(u) \right\Vert_q\\
& =\sum_{(u,v)\in S} w(u,v)^\frac{p}{2}  \left\Vert f(v)-f(u) \right\Vert_q^{p-1} \left(\frac{|f(v)_j - f(u)_j|}{\Vert f(v)-f(u)\Vert_q}\right)^{q-1} \frac{f(u)_j-f(v)_j}{\vert f(v)_j -f(u)_j \vert}\\
& = \sum_{(u,v)\in S} w(u,v)^\frac{p}{2}  \left\Vert f(v)-f(u) \right\Vert_q^{p-q} {|f(v)_j - f(u)_j|}^{q-2} (f(u)_j-f(v)_j).
\end{split}
\end{align}

Now let us consider three special cases and combine these results with the results of Appendix \ref{app:pqlap}. First we consider $p=q=1$ where 
\begin{align*}
\Delta_{w,1} f(u) &=\sum_{((u,v),j))\in S} \sqrt{w(u,v)} \Big( \sign\big(f(u)_j -f(v)_j\big) \Big)_{j=1}^d \\
&= \Bigg( \frac{\partial }{\partial f(u)_j} R_S(f) \Bigg)_{j=1}^d.
\end{align*}
Second we look at $p=q>1$
\begin{align*}
\Delta_{w,p} f(u) &=\sum_{(u,v)\in S} w(u,v)^\frac{p}{2} \Bigg( (f(v)_j -f(u)_j) \vert  (f(v)_j -f(u)_j) \vert^{p-2} \Bigg)_{j=1}^d \\
&= \Bigg( \frac{\partial }{\partial f(u)_j} R_S(f) \Bigg)_{j=1}^d.
\end{align*}

Finally, consider $q=2$ $p\geq 1$

\begin{align*}
\Delta_{w,p} f(u) &=\sum_{(u,v)\in S} w(u,v)^\frac{p}{2} \Vert f(v)-f(u)\Vert_2^{p-2} \Bigg( f(v)_j -f(u)_j  \Bigg)_{j=1}^d\\
& =\sum_{(u,v)\in S} w(u,v)^\frac{p}{2} \Vert f(v)-f(u)\Vert_2^{p-2} (f(v)-f(u)) = \frac{\partial }{\partial f(u)} R_S(f). 
\end{align*}

\section{Appendix: Analysis of directional derivatives in the non-smooth case}\label{app:C}
In the following we investigate the properties of the variational model \eqref{eq:generalproblem} for different choices of $p,q\geq 1$.  First, we discuss how we deduce an efficient optimization strategy for the latter model by describing the idea of Cut Pursuit in Section \ref{ssec:cutpursuit}. Subsequently, we show how to solve the two related subproblems, i.e., a minimum partition problem and a denoising problem, in Section \ref{ssec:graph_cuts} and \ref{ssec:red_problem}, respectively.
Hence, the only non-trivial cases to discuss in the following are for $q \geq p = 1$. For this let us denote the directional derivative of $R$ in direction $\vec{d} \in \mathcal{H}(V)$ by $R'(f;\vec{d}) := \langle \nabla R,\vec{d}\rangle$.
\paragraph{\bfseries{Case 1: $q=p=1$}}\mbox{}\par
In this case the regularization function in \eqref{eq:regularizer} simply becomes
$$ R(f) \ = \  \frac{1}{2}\sum_{(u,v)\in E}\sqrt{w(u,v)} \sum_{j=1}^{d}   \vert f(v)_j - f(u)_j \vert,$$ 
which is not differentiable along edges $(u,v) \in E$ where $f(u)_j = f(v)_j$ for some $j \in \lbrace 1,\ldots,d\rbrace$. 
In order to investigate the directional derivatives of the regularization function $R$ based on the choice $p$ and $q$ we introduce the following notation. Let us define by $S_1 := S_1(f) = \big\{((u,v),j) \in E \times \lbrace 1,\ldots,d\rbrace \ \big| \ f(u)_j \neq f(v)_j\big\}$ the set of points for which $R$ is differentiable. Then, we are able to partition our set of vertices $V = S_1 \cup S_1^c$ and thus restrict our discussion of the regularization functional $R$ to the nontrivial terms, i.e., the non-differentiable part $R_{S^c}$ with $R(f) = R_{S_1}(f) + R_{S_1^c}(f)$.
Computing the directional derivative for some direction $\vec{d} \in \mathcal{H}(V)$ can be done component-wise for every $((u,v),j)\in S_1^c$ and leads to 
\begin{align}\label{eq:derive_directional_derivative}
R_{S_1^c}'(f;\vec{d}) \ = \ \frac{1}{2}\sum_{((u,v),j)\in S_1^c} \sqrt{w(u,v)} \big\vert \vec{d}(v)_j -  \vec{d}(u)_j \big\vert.
\end{align}\par 

\paragraph{\bfseries{Case 2: $q>p=1$}}\mbox{}\par
Using the notation in Section \ref{ss:first_order_differential} the regularization functional in \eqref{eq:regularizer} can be written as
$$
R(f) \ = \ \frac{1}{2}\sum_{(u,v)\in E} \Vert \partial_vf(u)\Vert_q \ = \ \frac{1}{2}\sum_{(u,v)\in E} \left( \sum_{j=1}^d w(u,v)^\frac{q}{2} \vert f(v)_j - f(u)_j\vert^q\right)^\frac{1}{q}.
$$
It gets clear that this term is not differentiable iff $\Vert \partial_v f(u)\Vert_q = 0$, i.e., $f(u)_j-f(v)_j=0$ for every $j = 1,\ldots,d$, for some $(u,v) \in E$. In this case we can define $S_q(f) = \big\{(u,v)\in E \ \big| \ \Vert \partial_vf(u)\Vert_q \neq 0\big\}$ and thus the directional derivative can be computed for each edge $(u,v)\in S_q^c$ and is given by
\begin{align}\label{eq:dirregp}
R_{S_q^c}'(f; \vec{d})  = \frac{1}{2} \sum_{(u,v)\in S_q^c} \sqrt{w(u,v)} \left(\sum_{j=1}^{d} \vert  \vec{d}(v)_j -  \vec{d}(u)_j \vert^q\right)^\frac{1}{q}.
\end{align}

To summarize our observations above, we can deduce that for $q\geq p>1$ the regularizer is differentiable everywhere, and thus $S=\emptyset$. In this case the directional derivative of $J$ in direction $\vec{d}$ is simply given as 
\begin{align}\label{eq:dirderivdiff}
J'(f;\vec{d}) = \langle \nabla J, \vec{d} \rangle.
\end{align} 
and the gradient can be computed with \eqref{eq:derivative}.
For $q>p=1$ the functional $J$ is not differentiable in every vertex $v \in V$ but the directional derivative exists in every point.\par 
To conclude the discussion of the proposed denoising model we want to emphasize the relation of the derivative in \eqref{eq:derivative} to the graph $p$-Laplacian operators defined in Section \ref{s:graphs}.

\paragraph{\bfseries{Case 1: $p=q$}}\mbox{}\par 
In this case the derivative of the regularizer on the differentiable part $R_S$ is given for any $j \in \lbrace 1,\ldots,d\rbrace$ for $q=1$ as
\begin{align}\label{eq:derivativepq}
\begin{split}
\frac{\partial }{\partial f(u)_j}R_{S_1}(f) \ = \ \sum_{ ((u,v),j)\in S_1} \sqrt{w(u,v)} \frac{f(u)_j-f(v)_j}{|f(v)_j - f(u)_j|}
\end{split}
\end{align}
and for $q>1$ as
\begin{align}\label{eq:derivativepq2}
\begin{split}
\frac{\partial }{\partial f(u)_j}R_{S_q}(f) \ = \ \sum_{(u,v)\in S_q} w(u,v)^\frac{p}{2} {|f(v)_j - f(u)_j|}^{p-2} (f(u)_j-f(v)_j).
\end{split}
\end{align}
The above expression is exactly the definition of the \textit{anisotropic} graph $p$-Laplacian as introduced in \eqref{eq:pq-laplace_aniso_multi}.

\paragraph{\bfseries{Case 2: $q=2, p\geq 1$}}\mbox{}\par
In this case the derivative of the regularizer on the differentiable part $R_S$ is given for any $j \in \lbrace 1,\ldots,d\rbrace$
\begin{align*}
\frac{\partial }{\partial f(u)_j}R_{S_q}(f) 
&=  \sum_{(u,v)\in S_q} w(u,v)^\frac{p}{2}  \left\Vert f(v)-f(u) \right\Vert_2^{p-2}(f(u)_j-f(v)_j),
\end{align*}
which can formally be written for every $j=1,\ldots,d$ as the vector
\begin{align}\label{eq:derivativeq2}
\frac{\partial }{\partial f(u)}R_{S_q}(f) = \sum_{\substack{v \in V\\ (u,v) \in S_q}} w(u,v)^\frac{p}{2}  \left\Vert f(v)-f(u) \right\Vert_2^{p-2}(f(u)-f(v)).
\end{align}
This is exactly the \textit{isotropic} graph $p$-Laplacian as introduced in \eqref{eq:p-laplace_iso_multi}.

\section{Appendix: Projection onto $p^*,q^*$-balls}\label{app:pqball}

In the following we will see that for different $p$ and $q$ combination we will get different proximity operators. We will distinguish between three cases.

\paragraph{\bfseries{Case 1:} $q>1, p=1$}\mbox{}\par
For the special case $p=1$ we get $p^* = \infty$, and thus the dual norm becomes $\Vert y \Vert_{{\infty,q^*}} = \max_{y(u,v)} \big\{ \big\Vert y(u,v)\big\Vert_{q^*} \big\}$. Then $$
B_{\infty; q^*} = \Big\{ y \in X^* \big| \ \Vert y \Vert_{\infty,q^*} \leq \alpha\Big\} =  \Big\{ y \in X^* \big| \ \big\Vert y(u,v) \big\Vert_{q^*} \leq \alpha, \forall (u,v)\in E\Big\}
$$
The proximity operator for $p=1$, $q>1$ and  every $(u,v)\in E$ is just a projection of every $y(u,v)$ onto the ball $B_{q^*}(\alpha)$.

In conclusion we get the proximity operator of $F^*$ as
\begin{align}
\text{prox}_{\tau F^*}(z) &= \arg\min_{y\in X^*} \left\{ \frac{1}{2\tau}\Vert y-z \Vert_2^2 + F^* (y) \right\}\\
&= \text{proj}_{B_{\infty,q^*}(\alpha)}(z) \\
&=  \Big(\frac{\alpha \ z(u,v)}{\max(\alpha, \Vert z(u,v)\Vert_{q^*})}\Big)_{(u,v)\in E}.
\end{align}
\paragraph{\bfseries{Case 2:} $q=1, p=1$}\mbox{}\par
When $q=1$ and $p=1$, then $q^* = \infty$ and $p^* = \infty$. Then the ball becomes 
\begin{align*}
B_{\infty; \infty}(\alpha) &=  \big\{ y \in \mathcal{H}(E) \big| \ \Vert y(u,v) \Vert_{\infty} \leq \alpha, \forall (u,v)\in E\big\}\\ &=  \big\{ y \in  \mathcal{H}(E) \big| \ | y(u,v)_j | \leq \alpha, \forall (u,v)\in E, j\in[1,d]\big\},
\end{align*}
from which follows that the proximity operator becomes
\begin{align*}
\text{prox}_{\tau F^*}(z)&= \proj_{B_{\infty;\infty}(\alpha)}(z) \\ 
&=  \Bigg(\frac{\alpha z(u,v)_j}{\max(\alpha, | z(u,v)_j|)}\Bigg)_{(u,v,j)\in E\times [1,d]}.
\end{align*}

\dt{
\paragraph{\bfseries{Case 3:} $q \geq 1,1 < p < \infty$}\mbox{}\par
In this special case one has to compute the projection onto the $p^*,q^*$-ball numerically as there does not exist any known closed-form solution, except for the case of $p=2$.
Since the constraints are smooth one is able to use a standard Newton method for computing this projection.
Note that usually this case is not relevant in most applications from imaging or machine learning in contrast to cases 1 and 2 above.
}

\dt{
\paragraph{\bfseries{Case 4:} $q\geq 1, p = \infty$}\mbox{}\par
In this case one has to project onto $1,q*$-balls. To compute these projections there exist efficient numerical algorithms, e.g., see \cite{duchi2008efficient,sra2011fast}.
}
\section{Appendix: Regularity of $J'(f; \1_B)$}\label{app:submod}

To show \dt{the regularity of $J'(f; \1_B)$} as described in \cite{kolmogorov} we have to investigate the property for directional derivative of the non-differentiable part of the regularizer given as 
$$R_S'(f,\1_B) = \sum_{ ((u,v),j) \in S_1^c} \sqrt{w(u,v)}|\1_B(u)_j - \1_B(v)_j|.$$ 
This can be translated into the notation of \cite{kolmogorov} with $$E(\1_B(u)_j,\1_B(v)_j) = \sqrt{w(u,v)}|\1_B(u)_j - \1_B(v)_j|$$
for every $((u,v),j)\in S_1^c$.
Now we have to show that $E(0,0) + E(1,1) \leq E(1,0) + E(0,1)$ which is satisfied since
\begin{align*}
	E(1,1) = E(0,0) = 0,\\
	E(0,1) = E(1,0) = \sqrt{w(u,v)}
\end{align*}
and $w(u,v)\geq 0$. Thus, $J'(f;\1_B)$ is regular, respectively submodular.

\end{document}